\newtheorem{theorem}{Theorem}[section]
\newtheorem*{theorema}{Theorem A}
\newtheorem*{theoremb}{Theorem B}
\newtheorem*{theoremc}{Theorem C}
\newtheorem{corollary}[theorem]{Corollary}
\newtheorem{proposition}[theorem]{Proposition}
\newtheorem{lemma}[theorem]{Lemma}
\newtheorem{definition}[theorem]{Definition}
\newcommand{\gim}{\mathrm{gim}}
\newcommand{\gker}{\mathrm{gker}}
\newcommand{\trace}{\mathrm{trace}}
\renewcommand{\ker}{\mathrm{ker}}
\newcommand{\im}{\mathrm{im}}
\newcommand{\Fix}{\mathrm{Fix}}
\renewcommand{\int}{\mathrm{int}}
\newcommand{\Inv}{\mathrm{Inv}}
\title{\textbf{About the homological discrete Conley index of isolated invariant acyclic continua}}
\author{Luis Hern\'andez-Corbato, Patrice Le Calvez, Francisco R. Ruiz del Portal \thanks{ The authors have been supported by MICINN,
MTM 2009-07030. This work is part of the first author Ph. D. Thesis, who has been also supported by a FPU scholarship, AP2008-00102.
\newline 2000 {\em Mathematics Subject Classification}: 37C25, 37B30, 54H25.
\newline {\em Keywords and phrases.} Fixed point index, Conley index, filtration pairs.}}
\begin{document}

\maketitle

\begin{abstract}
This article includes an almost self-contained exposition on the discrete Conley index and its duality.
We work with a locally defined homeomorphism $f$ in $\mathds{R}^d$ and an acyclic continuum $X$, such as a cellular set or a fixed point,
invariant under $f$ and isolated.
We prove that the trace of the first discrete homological Conley index of $f$ and $X$
is greater than or equal to -1 and describe its periodical behavior.
If equality holds then the traces of the higher homological indices are 0.
In the case of orientation-reversing homeomorphisms of $\mathds{R}^3$, we obtain a characterization of the fixed point
index sequence $\{i(f^n, p)\}_{n \ge 1}$ for a fixed point $p$ which is isolated as an invariant set.
In particular, we obtain that $i(f,p) \leq 1$.
As a corollary, we prove that there are no
minimal orientation-reversing homeomorphisms in $\mathds{R}^3$.
\end{abstract}


\section{Introduction}

Our work deals with a local study of isolated invariant sets of maps defined in a Euclidean space.
An invariant set $X$ is \emph{isolated} if it is the maximal invariant subset contained in a neighborhood of $X$.
In our setting, all isolated invariant sets are compact. In addition, we restrict
our considerations to homeomorphisms, which will be defined in an open subset of $\mathds{R}^d$.
These maps will be called \emph{local homeomorphisms}.
The local nature of our work
makes all the results presented in the article equally valid if we replace $\mathds{R}^d$ by an arbitrary manifold.
Despite a study of the dynamics
of a map around a fixed point was at first the objective of this work, for most of our results
we simply assume that the compact isolated invariant set
is connected and \emph{acyclic}, i.e., all its \v{C}ech rational homology groups are trivial for $r \ge 1$.
Of course, fixed points and \emph{cellular} compacta, that is, compact sets having a basis of neighborhoods
composed of closed balls, or more generally trivial shape continua (see \cite{MardesicSegal} for information about shape theory) are particular cases of acyclic continua. A remarkable fact, see \cite{gabites}, is that a trivial shape continuum of a closed 3-manifold is an isolated invariant set for a flow if and only if it is cellular.
This question is open in the discrete case.

An important and successful topological invariant used in dynamical systems
is the Conley index.
For a complete introduction to the theory we refer to \cite{handbookconley}.
In this work, the dynamics is generated by the iteration of a map, it
is discrete. Our considerations mainly focus in the homological discrete Conley index,
which will be here provided with rational coefficients.
It associates to an isolated invariant set $X$ and a map $f$ an equivalence class, $\textit{h}(f, X)$, of graded
linear endomorphisms of vector graded spaces over $\mathds{Q}$
 which contains, up to conjugation, exactly one
automorphism. If we restrict our considerations to grade $r$, we obtain
the $r$-homological discrete Conley index, $\textit{h}_r(f, X)$.
As will be shown later, all the maps contained in the equivalence class
$\textit{h}_r(f, X)$ have equal traces, so the obvious definition $\trace(\textit{h}_r(f, X))$ makes sense
and, as a matter of fact, a great part of the contents of this article deals with this invariant.

Another very important invariant with which this work is concerned is fixed point index,
which associates to a map $f$ and a point or, more generally,
a set $X$ an integer $i(f, X)$.
It illustrates the aim of many topological invariants appearing in dynamical systems,
if the fixed point index of a set is non-zero then it contains fixed points.
The fixed point index $i(f, X)$ is an algebraic measure of the set of fixed points of a map $f$ in $X$.
The definition requires the set $X$ to have
a small neighborhood $U$ such that $\Fix(f) \cap (U \setminus X) = \emptyset$,
which will be always the case if $X$ is an isolated invariant set.
Then, the fixed point index is defined as the Brouwer degree of the map $\mathrm{id} - f$ restricted to $U$.
We will approach this invariant through
the more powerful tools of Conley index. Fixed point index is coarser than
discrete homological Conley index, as the following formula shows:
\begin{equation}\label{eq:lefschetzlike}
i(f, X) = \sum_{r \ge 0}(-1)^r \trace(\textit{h}_r(f, X)).
\end{equation}
This Lefschetz-like formula provides information about the fixed point index
of a compact isolated invariant set once we estimate the traces of its discrete
homological Conley indices.

An isolated invariant set $X$ of a local homeomorphism $f$ is also invariant
under $f^n$, for any positive integer $n$. It is immediate to prove that $X$ is isolated for $f^n$
as well. One may wonder about the relationship between the homological Conley indices
$\textit{h}(f, X)$ and $\textit{h}(f^n, X)$. Not surprisingly, the $n$-th power
of an endomorphism contained in the class $\textit{h}(f, X)$ belongs to the
equivalence class $\textit{h}(f^n, X)$. Therefore, once we obtain
an element of $\textit{h}(f, X)$, hence of every $\textit{h}_r(f, X)$ for $r \ge 0$,
we can use equation (\ref{eq:lefschetzlike}) to compute, not only the fixed point
index $i(f, X)$, but the fixed point index of any positive iterate of $f$ at $X$, $i(f^n, X)$.
This generalization of equation (\ref{eq:lefschetzlike}) is proved
in Subsection \ref{subsec:indexcomputation} and reads as follows,
\begin{equation}\label{eq:lefschetzlikeiterates}
i(f^n, X) = \sum_{r \ge 0}(-1)^r \trace(\textit{h}_r(f^n, X)).
\end{equation}

An invariant set $X$ of a local homeomorphism $f$ of $\mathds{R}^d$
 is an \emph{attractor} (resp. a \emph{repeller}) if it is the maximal
backward (forward) invariant subset contained in a neighborhood of $X$.
The discrete homological Conley indices are very easy to describe in these cases,
provided that $X$ is an acyclic continuum.
If $X$ is an attractor, for any positive integer $n$, $\textit{h}_0(f^n, X)$ is represented by the identity map over
$\mathds{Q}$ and all higher homological indices $\textit{h}_r(f^n, X)$ are trivial, they
contain the zero automorphism.
In the repeller case, the only non-trivial index is
$\textit{h}_{d}(f^n, X)$ and it is represented by the map
$s : \mathds{Q} \to \mathds{Q}$ defined by $s(x) = d(f)^n x$,
where $d(f)$ equals 1 if $f$ preserves orientation and $-1$ if $f$ reverses orientation.
The details of this description can be found in Subsection \ref{subsec:attractorrepeller}.
Equation (\ref{eq:lefschetzlikeiterates}) then shows that the fixed point index, $i(f^n, X)$, of
an attractor and a repeller are $1$ and $(-1)^d d(f)^n$, respectively, as is well-known.

The first result presented in the article deals with the 1-homological index.

\begin{theorema}
Let $f$ be a local homeomorphism of $\mathds{R}^d$ and $X$ an isolated invariant acyclic continuum.
The trace of the first homological discrete Conley index satisfies
$$\trace(\textit{h}_1(f, X)) \ge -1.$$
Furthermore, there exists a finite set $J$ and a map $\varphi : J \to J$ such that, for $n \ge 1$,
$$\trace(\textit{h}_1(f^n, X)) = -1 + \#\Fix(\varphi^n).$$
\end{theorema}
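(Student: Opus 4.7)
The plan is to extract the combinatorial data $(J, \varphi)$ from the action of $f$ on the exit set of a filtration pair. Since $X$ is an acyclic continuum, I would begin by choosing a filtration pair $(N, L)$ for $(f, X)$ where $N$ is a compact acyclic neighborhood of $X$ and $L$ is a compact exit set with $f(N \setminus L) \subseteq N$ and $f(L) \cap N \subseteq L$. The long exact sequence of the pair, together with $H_0(N) \cong \mathds{Q}$ and $H_1(N) = 0$, yields the canonical isomorphism $H_1(N, L) \cong \tilde{H}_0(L)$. If $L$ has connected components $L_1, \ldots, L_k$, this space has dimension $k - 1$.

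The key geometric step is to describe $\bar f_*$ on $\tilde{H}_0(L)$ in terms of a self-map of the components of $L$. Choosing the filtration pair finely enough, one arranges that for each $L_i$ the image $f(L_i) \cap N$ is contained in a single component $L_{\sigma(i)}$, defining a map $\sigma : \{L_1, \ldots, L_k\} \to \{L_1, \ldots, L_k\}$. A continuity argument, based on $f(N \setminus L) \subseteq N$, shows that for any relative arc $\gamma_{ij}$ from $L_i$ to $L_j$ through $N \setminus L$, the image $\bar f(\gamma_{ij})$ is homologous in $(N, L)$ to an arc from $L_{\sigma(i)}$ to $L_{\sigma(j)}$; hence $\bar f_*$ coincides with the linear extension of $\sigma$ on $\tilde{H}_0(L)$.

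The Conley index $\textit{h}_1(f, X)$ is obtained by passing to the Leray quotient. The generalized kernel of $\sigma_*$ is spanned by the classes of transient components (those whose $\sigma$-orbit eventually leaves any fixed recurrent subset), so on the quotient $\sigma$ becomes a genuine permutation $\varphi$ of the recurrent set $J = \bigcap_{n \geq 0} \sigma^n(\{L_1, \ldots, L_k\})$. The induced map on $\tilde{H}_0(L)/\gker$ is the reduced permutation representation of $\varphi$ on $\tilde{\mathds{Q}}^J$, whose $n$-th trace equals $\#\Fix(\varphi^n) - 1$. Since $\textit{h}_1(f^n, X)$ is represented by the $n$-th iterate of a representative of $\textit{h}_1(f, X)$, this proves the identity. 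The inequality $\trace(\textit{h}_1(f^n, X)) \geq -1$ follows immediately from $\#\Fix(\varphi^n) \geq 0$. The degenerate attractor case $L = \emptyset$ is handled separately by taking $J$ to be a one-point set and $\varphi = \mathrm{id}$.

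The main obstacle is establishing the clean combinatorial picture: constructing the filtration pair so that $f$ sends each component $L_i$ into a single component $L_{\sigma(i)}$, and verifying rigorously that the resulting $\sigma$-action really captures $\bar f_*$ on $\tilde{H}_0(L)$. This requires a careful geometric construction of $(N, L)$ (via regular neighborhoods or isolating blocks) to control the behavior of $f$ on the boundary components, plus a topological verification via arcs and the positive invariance of $L$. Once this combinatorial description is in place, the remainder reduces to standard linear algebra on partial-permutation matrices and their Leray quotients.
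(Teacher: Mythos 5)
Your plan to read off a self-map on the components of the exit-region and take its Leray reduction is in the right spirit, but two steps that you treat as routine are in fact where the whole difficulty of the theorem lives, and neither can be dispatched as stated.

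First, you begin by choosing a filtration pair $(N,L)$ with $N$ itself acyclic, which would give the clean isomorphism $H_1(N,L)\cong\widetilde H_0(L)$ from the long exact sequence. There is no reason you can do this, and the paper explicitly flags this as an open problem (see the discussion in Subsection~\ref{subsec:nilpotence}: the authors ``have not found yet the dynamical argument, if any, which allows to make such a simplification''). In general $H_1$ of any isolating neighborhood arising from a filtration pair may be nonzero, and the long exact sequence only yields a surjection $H_1(N,L)\twoheadrightarrow \widetilde H_0(L)$ whose kernel is the image $F_1$ of $H_1(N)$. The paper's workaround is precisely to quotient by this subspace $F_1$ and to prove, via a nilpotence argument (Proposition~\ref{prop:nilpotence}), that $F_1$ lies in the generalized kernel of the index map; this is exactly where the acyclicity of $X$ (not of $N$) is used, via the pro-triviality of $\check H_1$ and a carefully constructed nested sequence of filtration pairs with property v). Without this step you have no control over the spectrum of the part of $H_1(N,L)$ coming from $H_1(N)$.

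Second, the definition of $\sigma$ is not well-founded as you state it: you posit that after refining the filtration pair, $f(L_i)\cap N$ lies in a single component of $L$. But $L_i$ being connected does not make $f(L_i)\cap N$ connected, since $f(L_i)$ can exit and re-enter $N$; nothing about ``choosing the filtration pair finely'' forces the pieces of $f(L_i)\cap N$ to land in one component. The paper avoids this by defining $\varphi$ not on $\pi_0(L)$ but on $\pi_0(\overline{S'\setminus S})$ for two nested components $S\subset S'$, and the crucial technical input is Lemma~\ref{lem:uniquecc}: for every component $c$ of $\overline{S'\setminus S}$, the set $c\cap\partial_N S$ is \emph{connected}, so its image under $f$ is connected and falls in a single component. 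Proving that connectedness is a genuine Lefschetz-duality argument depending on the ``equal images in $H_1(N)$'' condition guaranteed by the construction of the nested sequence $(N,L_n)$. Your arc-homology heuristic glosses over exactly this. Once these two points are repaired, the remainder of your proposal (Leray reduction to a genuine permutation, trace of the reduced permutation endomorphism) coincides with the paper's use of Propositions~\ref{prop:reducedpermutationtrace} and~\ref{prop:permutationendotrace}.
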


The proof of this theorem follows from a combinatorial description of the first
homological discrete Conley index, which will be expressed in terms of a map $\varphi$ defined over the finite
set of connected components of a neighborhood of the exit set of an isolating
neighborhood of $X$. The precise statement is the first part of Theorem \ref{thm:key}
 of Subsection \ref{subsec:theorems}, some combinatorial ideas are
 essential part of its proof, which is the content of Section \ref{sec:proofthm}.
For higher dimensional homological indices the intuition would be to think that homology
classes are permuted when $\varphi$ does not fix any element. This idea will be formalized as follows:
there exists, for every $r > 1$, a decomposition in direct summands of a vector space which can be used to compute the
$r$-homological discrete Conley index such that the index map
permutes the summands in an equivalent way as $\varphi$ does. Then, we obtain
the following theorem:

\begin{theoremb}
Let $f$ be a local homeomorphism of $\mathds{R}^d$ and $X$ an isolated invariant acyclic continuum.
If $\trace(\textit{h}_{1}(f, X)) = -1$ then $\trace(\textit{h}_{r}(f, X)) = 0$ for any $r > 1$.
\end{theoremb}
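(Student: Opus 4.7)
The hypothesis combined with Theorem A forces $\#\Fix(\varphi) = 0$, i.e.\ $\varphi : J \to J$ has no fixed point at all. The plan is to promote the combinatorial description underlying Theorem A from degree $1$ to every degree $r > 1$: for each such $r$ I would exhibit a finite-dimensional vector space carrying an endomorphism $\psi_r$ that represents $h_r(f, X)$, together with a direct sum decomposition of this vector space indexed by $J$ such that $\psi_r$ sends the $j$-th summand into the $\varphi(j)$-th one. Once this is in place the conclusion is immediate: since $\varphi(j) \neq j$ for every $j$, every diagonal block of $\psi_r$ is zero and hence $\trace(\psi_r) = 0$.

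To construct the decomposition I would work with the same filtration pair $(N, L)$ and the same auxiliary neighborhood $L^+$ of the exit set used in the proof of Theorem A, so that $J$ indexes the connected components $\{L^+_j\}_{j \in J}$ of $L^+$ and the dynamical compatibility $f(L^+_j) \subseteq L^+_{\varphi(j)}$ holds. Since $X$ is acyclic, one may choose $N$ to retract onto $X$, so $N$ itself is acyclic. The long exact sequence of the pair $(N, L)$ then yields, equivariantly for the map induced by $f$,
$$H_r(N, L; \mathds{Q}) \cong H_{r-1}(L; \mathds{Q}), \qquad r \geq 2.$$
Choosing $L^+$ so that $L \hookrightarrow L^+$ is a homotopy equivalence, one can replace $L$ by $L^+$ on the right and split according to connected components to obtain
$$H_{r-1}(L^+; \mathds{Q}) \cong \bigoplus_{j \in J} H_{r-1}(L^+_j; \mathds{Q}).$$

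The dynamical inclusion $f(L^+_j) \subseteq L^+_{\varphi(j)}$ guarantees that the endomorphism induced by $f$ on this direct sum permutes the summands in the prescribed way: the $j$-th summand lands inside the $\varphi(j)$-th one. This is precisely the situation foreshadowed in the informal discussion that precedes the statement of Theorem B, and combined with the fixed point freeness of $\varphi$ it finishes the argument.

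The main obstacle I anticipate is twofold. First, one must verify that the filtration pair supplied by the proof of Theorem A admits simultaneously an $N$-side with vanishing higher homology and an $L^+$-side whose components are genuinely permuted by $f$ according to the very same $\varphi$ that appears in degree $1$; the combinatorial construction that proves Theorem A only needs the second property at the level of $\tilde{H}_0(L^+)$, and one must check it still encodes the action of $f_*$ in degrees $\geq 1$. Second, one must confirm that the endomorphism of $H_{r-1}(L^+)$ obtained above is a legitimate representative of $h_r(f, X)$ in the sense of shift equivalence, not merely an endomorphism with the correct trace; the natural way is to chase the identifications through the definition of the discrete Conley index and check compatibility with the formalism set up in the earlier sections of the paper.
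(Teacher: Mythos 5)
Your reduction is correct in outline (Theorem A gives $\#\Fix(\varphi)=0$, and a decomposition of a representative of $h_r$ permuted by a fixed-point-free finite map forces trace $0$), and this is indeed the strategy the paper uses via Theorem~\ref{thm:key}. But the way you propose to build the decomposition has two genuine gaps, and the paper explicitly goes around both of them.

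First, you assume that $N$ can be chosen to retract onto $X$, hence to be acyclic, so that the boundary map $\partial_*: H_r(N,L)\to H_{r-1}(L)$ is an isomorphism for $r\ge 2$. This is not available: for an arbitrary isolated invariant acyclic continuum there is no known way to produce an acyclic isolating neighborhood, and the paper says so in so many words at the start of Subsection~\ref{subsec:nilpotence} (``we have not found yet the dynamical argument, if any, which allows to make such a simplification''). The paper's substitute is the nilpotence result (Proposition~\ref{prop:nilpotence}): the subspace $F_r$ of $H_r(S',\overline{S'\setminus S})$ generated by cycles supported in $S'$ lies in the generalized kernel of $\widetilde f_{*,r}$, a consequence of acyclicity of $X$ combined with property v), not of any retraction onto $X$. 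One then quotients by $F_r$ (Lemma~\ref{lem:gimgker}) rather than killing $H_r(N)$ outright.

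Second, you want the $j$-th summand indexed by $\pi_0(L^+)$ to be sent into the $\varphi(j)$-th summand under $f_*$, i.e.\ $f(L^+_j)\subseteq L^+_{\varphi(j)}$. This is not true: in the paper, $\varphi$ is defined only through the images of the connected boundary pieces $c\cap\partial_N S$, not of the whole components $c$, because $f(c)$ will in general spread across several components. To get a decomposition actually respected by the index map in degrees $r\ge 2$, the paper introduces the stable set $\Lambda^+$ and indexes the summands $G_r^C(U)/F_r$ by the components $C\in\mathcal{C}^*$ of $S'\setminus\Lambda^+$ meeting $\overline{S'\setminus S}$; the resulting permutation data is a \emph{different} finite map $\psi$. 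The point of Proposition~\ref{prop:gimmeetstable} and Corollary~\ref{cor:varphiPsi} is precisely to show that $\psi$ is shift equivalent to $\varphi$, so that fixed-point-freeness transfers. Your proposal collapses $\varphi$ and $\psi$ into one map, which is exactly where the argument would break down without these extra steps.
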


The previous two theorems focus in the behavior of the first homological index.
One deduces from Theorem A and equation (\ref{eq:lefschetzlikeiterates}) that
$i(f^n, X) = 1 - \#\Fix(\varphi^n)$ in dimension $d = 2$, which was already known,
see \cite{lecalvezyoccozconley} or \cite{rportalsalazar}.
In principle, no further applications can be carried into higher dimensions.
A bit of help in this direction is provided
by a duality result between Conley indices. It says that, given an isolated invariant set
$X$ of a local homeomorphism $f$ of $\mathds{R}^d$, certainly also isolated
as an invariant set for $f^{-1}$, for any $0 \le r \le d$,
the $(d-r)$-index of $X$ and $f$ is dual, up to sign, to the $r$-index of $X$ and $f^{-1}$:
\begin{equation}
\tag{Szymczak's duality}
\textit{h}_{d-r}(f, X) \cong d(f) \cdot (\textit{h}_r(f^{-1}, X))^*.
\hspace{-2cm}
\end{equation}
The sign $d(f)$ is $-1$ or $1$ depending on whether $f$ reverses or preserves orientation.
This duality was first stated by Szymczak in \cite{szymczak} for the discrete Conley index. For the sake of completeness,
we include in the article a short proof of this duality together with a new point of view which makes
it closer to the original Conley index, which was defined for flows.
The reader may check that the homological indices of attractors and repellers,
which have been previously described, agree
with Szymczak's duality. Note that a repeller is an attractor for the map $f^{-1}$.

As a corollary of this duality and Theorems A and B, we obtain the following inequality in dimension 3.

\begin{corollary}\label{cor:indexinequality}
Suppose that $f$ is a local orientation-reversing homeomorphism of $\mathds{R}^3$
and $X$ is an isolated invariant acyclic continuum. Then,
$$i(f, X) \le 1.$$
\end{corollary}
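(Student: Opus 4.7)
The plan is to apply the Lefschetz-like formula (\ref{eq:lefschetzlike}) in dimension $d = 3$,
$$i(f,X) = \trace(\textit{h}_0(f,X)) - \trace(\textit{h}_1(f,X)) + \trace(\textit{h}_2(f,X)) - \trace(\textit{h}_3(f,X)),$$
and to bound each of the four summands using Theorems A and B together with Szymczak's duality, which for orientation-reversing $f$ reads $\trace(\textit{h}_{3-r}(f,X)) = -\trace(\textit{h}_r(f^{-1},X))$.

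First I would dispose of the extreme cases. If $X$ is an attractor, the explicit description from the introduction gives $i(f,X) = 1$; if $X$ is a repeller, the formula $i(f,X) = (-1)^d d(f) = (-1)^3(-1) = 1$ settles it. Note $X$ cannot be both, for it would then be a compact clopen subset of $\mathds{R}^3$.

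Next, assume $X$ is neither an attractor nor a repeller. Since $X$ is a connected isolated invariant set, we can take a connected isolating block, whose exit set $L$ is then non-empty, so $N/L$ is a connected pointed space, $\tilde{H}_0(N/L) = 0$, and $\trace(\textit{h}_0(f,X)) = 0$. Applying Szymczak's duality to the $0$-index of $f^{-1}$ (an attractor for $f^{-1}$ is a repeller for $f$), the same reasoning yields $\trace(\textit{h}_3(f,X)) = -\trace(\textit{h}_0(f^{-1},X)) = 0$. Writing $a = \trace(\textit{h}_1(f,X))$ and $b = \trace(\textit{h}_1(f^{-1},X))$, Szymczak's duality rewrites $\trace(\textit{h}_2(f,X)) = -b$, and the Lefschetz-like formula collapses to
$$i(f,X) = -a - b.$$

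Theorem A applied to both $f$ and $f^{-1}$ gives $a, b \ge -1$. To obtain $-a-b \le 1$, the case $a = b = -1$ must be excluded: if $a = -1$, Theorem B forces $\trace(\textit{h}_r(f,X)) = 0$ for every $r > 1$, in particular $-b = \trace(\textit{h}_2(f,X)) = 0$, hence $b = 0$; symmetrically $b = -1$ implies $a = 0$ via Theorem B applied to $f^{-1}$ together with Szymczak's duality. Consequently, either $a, b \ge 0$ and $-a-b \le 0$, or exactly one of $a,b$ equals $-1$ and the other is $0$ and $-a-b = 1$. In every case $i(f,X) \le 1$.

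I do not expect a real obstacle: the proof is careful bookkeeping using the three stated results. The only subtle point worth double-checking is the identification of $\trace(\textit{h}_0(f,X))$ via the filtration-pair picture, so that the attractor/repeller dichotomy controls the endpoints of the formula.
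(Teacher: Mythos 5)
Your argument is correct and matches the paper's proof in all essentials: dispose of the attractor/repeller cases, observe that in the remaining case only the $r=1,2$ traces contribute, bound each via Theorem~A together with Szymczak's duality, and then use Theorem~B to rule out the value $2$. The bookkeeping with $a$ and $b$ and the explicit case split is a slightly more verbose rendering of the same reasoning; you also derive the vanishing of $\trace(\textit{h}_3(f,X))$ via duality rather than directly from the triviality of $H_d(N/L)$ as the paper does, but this is an inessential variant.
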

\begin{proof}
In the case $X$ is either an attractor or a repeller,
the fixed point index is equal to $1$ or $-d(f)$, respectively, and the inequality holds.
Otherwise, in Subsection \ref{subsec:attractorrepeller} we will show that all $r$-indices are trivial for $r = 0$ and $r \ge 3$.
Furthermore, Szymczak's duality and Theorem A yield that $\trace(\textit{h}_2(f, X)) = - \trace(\textit{h}_1(f^{-1}, X)) \le 1$.
It follows from (\ref{eq:lefschetzlike}) that
$$i(f, X) = - \trace(\textit{h}_1(f, X)) + \trace(\textit{h}_2(f, X)) \le 2.$$
Theorem B shows that this bound is never attained, if the first term is $1$ then the second vanishes. Therefore, we
conclude that $i(f, X) \le 1$.
\end{proof}

Another easy corollary of Theorems A and B gives a sufficiency condition for existence of fixed points in terms
of the trace of the homological discrete Conley index.

\begin{corollary}
Let $f$ be a local homeomorphism of $\mathds{R}^d$ and $X$ an isolated invariant acyclic continuum. If (at least)
one of the following alternatives hold:
\begin{itemize}
\item $\trace(\textit{h}_{1}(f, X)) = -1$,
\item $f$ preserves orientation and $\trace(\textit{h}_{d-1}(f, X)) = -1$
\item or $f$ reverses orientation and $\trace(\textit{h}_{d-1}(f, X)) = 1,$
\end{itemize}
then $i(f, X) = 1$ and, in particular, $X$ contains a fixed point.
\end{corollary}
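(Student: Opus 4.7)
The plan is to combine Theorems A and B with the Lefschetz-like formula (\ref{eq:lefschetzlike}), using Szymczak's duality to reduce the second and third alternatives to the same situation as the first, applied to $f^{-1}$.

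I would first dispose of the attractor/repeller case: from the explicit descriptions of the indices recalled in the introduction, for an acyclic attractor one has $\trace(\textit{h}_r(f,X))=0$ for every $r\ge 1$, while for an acyclic repeller only $\textit{h}_d(f,X)$ is non-trivial. Consequently, as soon as $d\ge 2$ none of the three hypotheses can hold in these cases; one can thus assume $X$ is neither attractor nor repeller, which in turn gives $\trace(\textit{h}_0(f,X))=\trace(\textit{h}_d(f,X))=0$.

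Under the first alternative, Theorem B gives $\trace(\textit{h}_r(f,X))=0$ for every $r>1$, and (\ref{eq:lefschetzlike}) collapses to
$$i(f,X) \;=\; -\trace(\textit{h}_1(f,X)) \;=\; 1.$$
Under the second and third alternatives, the assumption can be written uniformly as $\trace(\textit{h}_{d-1}(f,X))=-d(f)$. Szymczak's duality, together with the fact that the trace of the dual equals the trace of the original map, yields
$$\trace(\textit{h}_{d-1}(f,X))\;=\;d(f)\cdot \trace(\textit{h}_1(f^{-1},X)),$$
and hence $\trace(\textit{h}_1(f^{-1},X))=-1$. Applying Theorem B to $f^{-1}$ gives $\trace(\textit{h}_r(f^{-1},X))=0$ for every $r>1$, and transporting this back through Szymczak's duality produces $\trace(\textit{h}_s(f,X))=0$ for every $s\ne d-1$. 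Substituting into (\ref{eq:lefschetzlike}) leaves only the $(d-1)$-term, which gives the desired value $i(f,X)=1$.

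Once one knows that the hypotheses pin down $\trace(\textit{h}_{1}(f^{-1},X))$ and therefore, via Theorem B and Szymczak's duality, the entire trace sequence of $f$, the conclusion is a purely formal computation with (\ref{eq:lefschetzlike}). The main points to watch are the sign conventions that Szymczak's duality introduces through the factor $d(f)$ and the fact that the attractor/repeller extremes are automatically excluded by the assumption that one of the non-trivial traces equals $\pm 1$.
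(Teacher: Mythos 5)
Your approach is essentially the paper's: treat the first alternative directly via Theorem B and the Lefschetz formula (\ref{eq:lefschetzlike}), and reduce the other two to it through Szymczak's duality applied to $f^{-1}$. The treatment of the first alternative is correct and matches the paper exactly, including the preliminary reduction to the case where $X$ is neither an attractor nor a repeller.

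For the second and third alternatives there is a genuine gap in the last step. You correctly arrive at $\trace(\textit{h}_{d-1}(f,X)) = -d(f)$ and $\trace(\textit{h}_{s}(f,X)) = 0$ for every $s\neq d-1$, but the concluding sentence that this ``gives the desired value $i(f,X)=1$'' skips the sign bookkeeping. Only the $r=d-1$ term of (\ref{eq:lefschetzlike}) survives, so
$$i(f,X)=(-1)^{d-1}\trace(\textit{h}_{d-1}(f,X))=(-1)^{d-1}\bigl(-d(f)\bigr)=(-1)^{d}\,d(f),$$
which equals $1$ only when $d(f)=(-1)^{d}$, and equals $-1$ otherwise. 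For instance, under the second alternative with $d$ odd, or under the third alternative with $d$ even, the computation yields $i(f,X)=-1$, not $1$. The paper's own proof contains the same unaddressed point: it asserts that alternatives 2 and 3 ``reduce to the first applied to $f^{-1}$'', but $i(f,X)=(-1)^{d}d(f)\,i(f^{-1},X)$, so knowing $i(f^{-1},X)=1$ does not yield $i(f,X)=1$ without a parity restriction relating $d$ and $d(f)$. What the argument does give in all three cases is $|i(f,X)|=1\neq 0$, hence the existence of a fixed point in $X$; but the stated numerical value of the index under alternatives 2 and 3 does not follow from the computation and should be re-examined.
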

\begin{proof}
Theorem B shows that only one possibility may hold at the same time. By Szymczak's duality, the second and
third alternatives are equivalent to $\trace(\textit{h}_{1}(f^{-1}, X)) = -1$, hence we only need to prove the
statement for the first hypothesis.

In the case $X$ is an attractor, the conclusion is well-known.
 Otherwise, in Subsection \ref{subsec:attractorrepeller} we will see that
$\textit{h}_0(f, X)$ is trivial and, by Theorem B, $\trace(\textit{h}_r(f, X)) = 0$ for all $r > 1$.
Equation (\ref{eq:lefschetzlike}) then leads to
$$i(f, X) = - \trace(\textit{h}_1(f, X)) = 1.$$
\end{proof}

The restriction expressed by Corollary \ref{cor:indexinequality} is reflected in the possible sequences of integers
that can be realized as the sequence of fixed point indices $\{i(f^n, p)\}_{n \ge 1}$ of a fixed point $p$, isolated as an invariant set, of
a local orientation-reversing homeomorphism of $\mathds{R}^3$. Any such sequence must satisfy
some general relations, called Dold's congruences, see \cite{dold},
which will be briefly explained in Subsection \ref{subsec:permendo},
where we will also introduce the normalized sequences $\sigma^k$.
As a hint, let us say that every sequence $I = \{I_n\}_{n \ge 1}$ can be expressed uniquely as a formal linear combination of the
normalized sequences, $I = \sum_{k \ge 0} a_k \sigma^k$, and it satisfies Dold's congruences if and only if
all the coefficients $a_k$ are integers. In dimension 3, it is known that the sequence $\{i(f^n, p)\}_{n \ge 1}$ must be periodic,
no matter whether the orientation is reversed or not, see \cite{periodicindices}. We include an
alternative proof of this result which follows from Theorem A and Szymczak's duality.

\begin{corollary}
Let $f$ be a local homeomorphism of $\mathds{R}^3$ and $X$ an isolated invariant acyclic continuum. Then,
the sequence $\{i(f^n, X)\}_{n \ge 1}$ is periodic.
\end{corollary}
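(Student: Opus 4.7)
The plan is to use the Lefschetz-like formula (\ref{eq:lefschetzlikeiterates}) and show that each trace appearing on its right-hand side is a periodic function of $n$. I would first dispose of the easy cases: if $X$ is an attractor, the description of Subsection \ref{subsec:attractorrepeller} gives $i(f^n, X) = 1$ for every $n$; if $X$ is a repeller, it gives $i(f^n, X) = -d(f)^n$. Both are clearly periodic.

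Assume from now on that $X$ is neither an attractor nor a repeller. Then the same subsection yields $\textit{h}_0(f, X) = 0$. Since $X$ is also not an attractor for $f^{-1}$ (it is not a repeller for $f$), we similarly have $\textit{h}_0(f^{-1}, X) = 0$; Szymczak's duality at $r = 0$ then forces $\textit{h}_3(f, X) = 0$. These vanishings propagate to every iterate, because the $n$-th power of the zero map is zero, so (\ref{eq:lefschetzlikeiterates}) reduces to
$$i(f^n, X) = -\trace(\textit{h}_1(f^n, X)) + \trace(\textit{h}_2(f^n, X)).$$

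Theorem A applied to $f$ provides a finite set $J$ and a self-map $\varphi : J \to J$ with $\trace(\textit{h}_1(f^n, X)) = -1 + \#\Fix(\varphi^n)$; applied to $f^{-1}$ it provides an analogous pair $(J', \varphi')$ with $\trace(\textit{h}_1(f^{-n}, X)) = -1 + \#\Fix((\varphi')^n)$. Szymczak's duality for $(f^n, X)$ at $r = 1$, together with the invariance of trace under dualization, yields
$$\trace(\textit{h}_2(f^n, X)) = d(f)^n \, \trace(\textit{h}_1(f^{-n}, X)) = d(f)^n \bigl(-1 + \#\Fix((\varphi')^n)\bigr).$$
Substituting, $i(f^n, X) = 1 - d(f)^n - \#\Fix(\varphi^n) + d(f)^n \, \#\Fix((\varphi')^n)$.

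All that remains is the elementary observation that, for any self-map $\psi$ of a finite set, $\#\Fix(\psi^n)$ is periodic in $n$: only points lying on cycles contribute, a cyclic point of period $d$ belongs to $\Fix(\psi^n)$ iff $d \mid n$, so if $L$ denotes the least common multiple of the cycle lengths of $\psi$ then $\#\Fix(\psi^n)$ depends only on $\gcd(n, L)$ and is therefore $L$-periodic. Since $d(f)^n$ is also periodic, $i(f^n, X)$ is a finite sum of periodic sequences, hence periodic. I do not foresee any serious obstacle; the argument is essentially forced by the hint to combine Theorem A with Szymczak's duality, and the only mildly technical point is the triviality of the outermost indices, which follows from the attractor/repeller analysis together with the duality.
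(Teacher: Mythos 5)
Your proof is correct and follows essentially the same route as the paper: dispose of the attractor/repeller cases, reduce equation (\ref{eq:lefschetzlikeiterates}) to the two middle traces via the vanishing of $\textit{h}_0$ and $\textit{h}_3$, apply Theorem A to both $f$ and $f^{-1}$ to get finite maps $\varphi, \varphi'$, convert $\trace(\textit{h}_2(f^n, X))$ to $\trace(\textit{h}_1(f^{-n}, X))$ via Szymczak's duality, and conclude from the periodicity of $\{\#\Fix(\varphi^n)\}$. The only cosmetic difference is that you keep the sign $d(f)^n$ in a single unified formula rather than splitting into orientation-preserving and orientation-reversing cases as the paper does.
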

\begin{proof}
If $X$ is an attractor, the sequence $\{i(f^n, X)\}_{n \ge 1}$, is constant equal to 1; if
$X$ is a repeller and $f$ preserves orientation, it is constant equal to $-1$; if $X$ is a repeller and $f$
reverses orientation, one has $i(f^n, X) = (-1)^{n+1}$. Assume now that $X$ is neither an attractor, nor
a repeller and that $f$ preserves orientation. Using Szymczak's duality
 we obtain $\trace(\textit{h}_2(f^n, X)) = \trace(\textit{h}_1(f^{-n}, X))$.
By Theorem A, there exist two maps $\varphi : J \to J$ and $\varphi': J' \to J'$, with $J$ and $J'$ being finite sets,
such that $\trace(\textit{h}_1(f^n, X)) = -1 + \#\Fix(\varphi^n)$ and $\trace(\textit{h}_1(f^{-n}, X) = -1 + \#\Fix((\varphi')^n)$.
Note that the sequences $\{\#\Fix(\varphi^n)\}_{n \ge 1}$ and $\{\#\Fix((\varphi')^n)\}_{n \ge 1}$ are periodic.
Substituting in equation (\ref{eq:lefschetzlikeiterates}) we get that
$$i(f^n, X) = - \trace(\textit{h}_1(f^n, X)) + \trace(\textit{h}_2(f^n, X)) = - \#\Fix(\varphi^n) + \#\Fix((\varphi')^n),$$
and the conclusion follows.
The same formula holds for even iterates in the orientation-reversing case, whereas if $n$ is odd one has
$$i(f^n, X) = 2 - \#\Fix(\varphi^n) - \#\Fix((\varphi')^n).$$
This computation is also straightforward.
\end{proof}

The additional restrictions expressed by Corollary \ref{cor:indexinequality} and, more subtlety, by Theorems A and B on the
fixed point index sequence $\{i(f^n, p)\}_{n \ge 1}$ is the content of the following theorem.

\begin{theoremc}\label{thm:characterizationsequences}
Given a sequence $I = \{I_n\}_{n \ge 1} = \sum_{k \ge 1}a_k \sigma^k$, there exists an orientation-reversing local homeomorphism $f$ of
$\mathds{R}^3$ with a fixed point $p$
isolated as an invariant set and such that $I = \{i(f^n,p)\}_{n \ge 1}$ if and only if
\begin{itemize}
\item the coefficients $a_k$ are integers,
\item there are finitely many non-zero $a_k$,
\item $a_1 \le 1$ and $a_{k} \le 0$ for all odd $k > 1$.
\end{itemize}
\end{theoremc}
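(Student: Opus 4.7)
The proof splits into two directions. For the necessity, I use the formula obtained at the end of the preceding corollary,
$$i(f^n, p) = (1 - (-1)^n) - \#\Fix(\varphi^n) + (-1)^n\,\#\Fix((\varphi')^n),$$
where $\varphi : J \to J$ and $\varphi' : J' \to J'$ are the finite self-maps associated by Theorem A with $f$ and $f^{-1}$ respectively. Integrality of the $a_k$ is Dold's congruences \cite{dold}; the finiteness of the support follows from the finiteness of $J$ and $J'$; and the bound $a_1 \le 1$ is exactly Corollary \ref{cor:indexinequality}. For the remaining bound on odd $k > 1$, I expand everything in the $\sigma^k$ basis using $(-1)^n = \sigma^2 - \sigma^1$ together with the pointwise identities $(-1)^n \sigma^k = \sigma^{2k} - \sigma^k$ for odd $k$ and $(-1)^n \sigma^k = \sigma^k$ for even $k$, and the expansion $\#\Fix(\varphi^n) = \sum_k P_k \sigma^k(n)$, where $P_k$ counts the periodic orbits of $\varphi$ of minimal period $k$ (similarly $P'_k$ for $\varphi'$). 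Reading off the coefficient of $\sigma^k$ for odd $k \ge 3$ then yields $a_k = -P_k - P'_k \le 0$.

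For the sufficiency, the first step is purely algebraic: given $(a_k)$ satisfying the three conditions, I produce non-negative integer sequences $(P_k)$ and $(P'_k)$ of finite support whose insertion in the formula above returns exactly $(a_k)$. The system is triangular when solved in order of $k$: at $k=1$ the hypothesis $a_1 \le 1$ gives $P_1 + P'_1 = 2 - a_1 \ge 1$; at odd $k > 1$ the hypothesis $a_k \le 0$ gives $P_k + P'_k = -a_k \ge 0$; and the even-$k$ equations are sign-unconstrained, hence always solvable. This reduces the theorem to constructing an orientation-reversing local homeomorphism $f$ of $\mathds{R}^3$ with $p$ isolated as an invariant set and whose combinatorial data (via Theorem A applied to both $f$ and $f^{-1}$) exhibit the prescribed numbers $P_k$ and $P'_k$ of cyclic orbits.

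The geometric model would be built around a filtration pair on a small closed ball $N$ about $p$: for each cycle of $\varphi$ of length $k$ one installs $k$ components of a neighborhood of the exit set cyclically permuted by $f$, and dually for each cycle of $\varphi'$ of length $k$ one installs $k$ components of the exit set of $f^{-1}$ cyclically permuted by $f^{-1}$. The orientation-reversing character is supplied by an extra reflecting direction; the isolation of $p$ as an invariant set is enforced by making each component strictly contracting (respectively expanding) transversally to its cycle, so that no orbit other than $p$ stays in $N$ under every forward and backward iterate of $f$. Once the model is built, the formula from the necessity part recovers the prescribed sequence $I$ by construction.

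The hard step is this geometric construction. Its subtlety is that $\varphi$ and $\varphi'$ are a priori unrelated yet must be realized simultaneously, as the exit combinatorics of $f$ and of $f^{-1}$ respectively, inside the same three-dimensional ball, while preserving the orientation-reversing character and without introducing spurious invariant orbits that would destroy the isolation of $p$. It is precisely this decoupling that permits positive $a_k$ for even $k$ in Theorem C: in dimension two only $\varphi$ enters the corresponding formula $i(f^n, p) = 1 - \#\Fix(\varphi^n)$, forcing $a_k \le 0$ for every $k > 1$ and giving the strictly smaller family of realizable sequences in the plane.
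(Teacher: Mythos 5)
Your necessity argument is sound and matches the paper's. The formula $i(f^n,p)=(1-(-1)^n)-\#\Fix(\varphi^n)+(-1)^n\#\Fix((\varphi')^n)$ is correct, and your expansion in the $\sigma^k$ basis via $(-1)^n=\sigma^2-\sigma^1$ and $(-1)^n\sigma^k=\sigma^{2k}-\sigma^k$ (odd $k$), $=\sigma^k$ (even $k$) reproduces exactly the paper's change-of-variables, giving $a_k=-P_k-P'_k$ for odd $k\ge 3$ and $a_1 = 2-P_1-P'_1$. The triangular solvability of the resulting system for non-negative integers $(P_k),(P'_k)$ is also fine.

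The gap is in sufficiency, and it is a genuine one, which you yourself flag by calling the geometric construction ``the hard step'' and then not carrying it out. What you write is a wish list, not a construction: you say one should install $k$ exit components per cycle of $\varphi$ and dually per cycle of $\varphi'$ ``inside the same three-dimensional ball,'' but that is precisely the difficulty, and your sketch does not resolve it. The key obstruction is that the exit sets of $f$ and of $f^{-1}$ are not independent objects one can install at will: if $N$ is a ball about $p$, they come from a regular decomposition $(l^-,l^+)$ of the boundary sphere $\partial N\cong S^2$, so $l^+$ is forced to be $\overline{S^2\setminus l^-}$ and the induced permutations of $\pi_0(l^-)$ and $\pi_0(l^+)$ under an orientation-reversing $h:S^2\to S^2$ are coupled, not ``a priori unrelated.'' The paper addresses this head-on by working with the skew-product model on $S^3$ from Section~\ref{sec:sequences}: it makes \emph{specific} choices of the solution $(b_k,c_k)$ ($c_1=1$, $b_2\ge 1$, $c_k=0$ for odd $k>1$), builds two orientation-reversing sphere homeomorphisms $h^\pm$ with prescribed disk permutations via Homma's extension theorem (Lemma~\ref{lem:arbitraryorbits}), and then glues two copies of $S^-$ to $S^+$ along disks, with the attractor $l^-$ consisting of the remaining periodic disks, to realize both the $H_0$ and $H_1$ combinatorics simultaneously. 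Your sketch omits all of this, and in particular your claim that the theorem ``reduces'' to realizing an \emph{arbitrary} non-negative solution $(P_k),(P'_k)$ overlooks that the construction only works for the carefully chosen solution, not for every one. Without the explicit radial model and Lemma~\ref{lem:arbitraryorbits}, the sufficiency direction is not established.
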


In Section \ref{sec:conleyindex} we will prove the necessity condition, which holds for any
 isolated invariant acyclic continuum $X$. However, the example constructed in Section
\ref{sec:sequences} to prove sufficiency can not be easily extended to acyclic continua
 which do not admit any neighborhood $U$ such that $U \setminus X$ is homeomorphic to $S^2 \times \mathds{R}$.

Corollary \ref{cor:indexinequality} has a nice dynamical application to minimal homeomorphisms.
 A map is minimal if there are no proper invariant sets.
 The question of existence of minimal homeomorphisms was raised by Ulam and is one
 of the problems contained in the ``Scottish Book'', see \cite{scottish}.
  It is known that every manifold over which $S^1$
 acts freely, for example any odd-dimensional sphere, admits minimal homeomorphisms,
  due to a work of Fathi and Herman, see \cite{fathiherman}. On the contrary,
the description of the sequences of fixed point indices of the iterates of a map led Le Calvez and Yoccoz
to proof the non-existence of minimal homeomorphisms in the finitely-punctured
2-sphere, see \cite{lecalvezyoccoz}.
Despite Theorem C does not apparently provide
enough insight in the question for orientation-reversing homeomorphism in the
finitely-punctured 3-sphere, Corollary \ref{cor:indexinequality} allows us to address this question for the case
of orientation-reversing homeomorphisms in $\mathds{R}^3$.

\begin{corollary}
If $f$ is a fixed point free orientation-reversing homeomorphism of $\mathds{R}^3$, then for every compact set $K\subset\mathds{R}^3$
there exists an orbit of $f$ disjoint from $K$. In particular, there are no minimal orientation-reversing homeomorphisms of $\mathds{R}^3$.
\end{corollary}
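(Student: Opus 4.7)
The plan is to argue by contradiction. Suppose there exists a compact set $K\subset\mathds{R}^3$ such that every orbit of $f$ meets $K$; the assertion about minimality will then follow at once, since a minimal homeomorphism makes every orbit dense and therefore meeting any closed ball.

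The first step is to promote $f$ to the one-point compactification $S^3=\mathds{R}^3\cup\{\infty\}$. The (cheap) observation is that every homeomorphism of $\mathds{R}^3$ is automatically proper, since $f^{-1}(C)$ is the image of the compact set $C$ under the continuous map $f^{-1}$. Consequently $f$ extends uniquely to a homeomorphism $\tilde f\colon S^3\to S^3$ with $\tilde f(\infty)=\infty$, and this extension is still orientation-reversing (because the orientation of $\mathds{R}^3$ extends to $S^3$). Since $f$ has no fixed points in $\mathds{R}^3$, the point $\infty$ is the unique fixed point of $\tilde f$.

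The second step is to check that $\{\infty\}$ is isolated as an invariant set of $\tilde f$, which is the only place the hypothesis on $K$ enters. I would pick a closed ball $B\subset\mathds{R}^3$ with $K\subset\int(B)$ and consider the closed neighborhood $N=S^3\setminus\int(B)$ of $\infty$. For any $x\in N\setminus\{\infty\}=\mathds{R}^3\setminus\int(B)$, the orbit of $x$ meets $K\subset\int(B)$ by assumption, so it cannot lie entirely in $N$. Therefore $\{\infty\}$ is the maximal $\tilde f$-invariant subset of $N$; it is trivially an isolated invariant acyclic continuum. Reading $\tilde f$ in a chart at $\infty$ and applying Corollary \ref{cor:indexinequality}, I obtain the bound
$$i(\tilde f,\infty)\le 1.$$

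The third step is to compare this with the Lefschetz fixed point theorem applied to $\tilde f\colon S^3\to S^3$. Since $\tilde f$ is orientation-reversing,
$$L(\tilde f)=\trace(\tilde f_{*,0})-\trace(\tilde f_{*,3})=1-(-1)=2,$$
and since $\infty$ is the only fixed point, $i(\tilde f,\infty)=L(\tilde f)=2$, contradicting the previous inequality. I do not anticipate any genuinely difficult step: the one delicate ingredient is the isolation of $\{\infty\}$ as an invariant set, which the choice of $B$ around $K$ takes care of automatically.
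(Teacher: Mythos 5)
Your proposal is correct and follows essentially the same route as the paper: extend $f$ to $S^3$ fixing $\infty$, note that Lefschetz--Dold forces $i(\tilde f,\infty)=2$, and invoke Corollary \ref{cor:indexinequality} to conclude that $\{\infty\}$ cannot be an isolated invariant set. The only difference is presentational --- you phrase the last step as a proof by contradiction with the compact set $K$ made explicit via an exhausting ball, whereas the paper states directly that $\infty$ is not isolated and lets the conclusion follow; these are logically the same argument.
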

\begin{proof}
Denote $\bar{f}$ the extension of $f$ to $S^3$, leaving the point at $\infty$ fixed.
Since $\infty$ is the unique fixed point of $\bar{f}$, we deduce from Lefschetz-Dold Theorem that $i(\bar{f}, \infty) = 2$,
the Lefschetz number of $\bar{f}$.
Corollary \ref{cor:indexinequality} then implies that $\infty$ is not isolated as an invariant set and the conclusion follows.
\end{proof}

The remainder of the paper is organized as follows. Some preliminary algebraic and topological concepts
are introduced in Section \ref{sec:preliminaries}. In the next section, discrete Conley index is defined and
a new approach to the index and its duality is included.
The last subsection of Section \ref{sec:conleyindex} is devoted to
state Theorem \ref{thm:key}, which allows to prove Theorems A, B and the first part of Theorem C.
The proof of our results for a particular radial case is the content of Section \ref{sec:sequences},
where the second half of Theorem C is proved.
Section \ref{sec:proofthm} includes the proof of Theorem \ref{thm:key} and
the final section is devoted to give a brief overview of some results about fixed point index
of homeomorphisms.

\section{Preliminaries}\label{sec:preliminaries}

\subsection{Shift equivalence}\label{subsec:shiftequivalence}

In this article, we present the approach taken by Franks and Richeson in \cite{franksricheson} to
introduce the discrete Conley index. The key concept to understand their work is shift equivalence.

\begin{definition}
Let $\mathcal{K}$ be a category and $Y, Y'$ two objects of $\mathcal{K}$. Consider the endomorphisms
$g : Y \to Y$ and $g' : Y' \to Y'$. Then, $g$ and $g'$ are said to be \emph{shift equivalent} provided that
there exist morphisms $a : Y \to Y'$ and $b : Y' \to Y$ and an integer $m \ge 0$ such that
\begin{itemize}
\item $a \circ g = g' \circ a$, $b \circ g' = g \circ b$,
\item $b \circ a = g^m$ and $a \circ b = (g')^m$.
\end{itemize}
\end{definition}

It is not difficult to check that the notion of shift equivalence is an equivalence relation. The idea
behind this definition is to put in the same class all endomorphisms whose ``cores'' are conjugate,
where ``core'' means the largest automorphism obtained as a restriction of an endomorphism.
As a motivation, assume $g$ and $g'$ are bijective and shift equivalent. Then, if we define $h = a \circ g^{-m} =
(g')^m \circ a$ we obtain that $h^{-1} = b$ and $h$ is a conjugation between $g$ and $g'$, as it
satisfies $h^{-1} \circ g' \circ h = g$. Conversely, if $h$ is a conjugation then $a = h$ and $b = h^{-1}$
provide a shift equivalence between $g$ and $g'$.
The previous intuition will be formalized once we move onto the following particular cases.

Firstly, consider a very simple setting, the category formed by finite sets and maps between them.
\begin{definition}
Given a finite set $J$ and a map $\varphi : J \to J$, denote $\gim(\varphi)$ the largest invariant subset of $J$ under $\varphi$.
Then, we define the \emph{permutation induced by $\varphi$} as the
restriction of $\varphi$ to the subset $\gim(\varphi)$, and denote it by $\mathcal{L}(\varphi)$.
\end{definition}
Shift equivalence here only makes differences within the induced permutations, as the following proposition shows.
\begin{proposition}\label{prop:lerayreductionfinite}
Two finite maps $\varphi : J \to J$ and $\varphi' : J' \to J'$ are shift equivalent if and only if their
induced permutations are conjugate.
\end{proposition}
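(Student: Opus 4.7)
The plan is to use the characterization of $\gim(\varphi)$ as the eventual image of $\varphi$: for any finite map there is an $N$ with $\varphi^N(J) = \varphi^{N+1}(J) = \cdots = \gim(\varphi)$, and the restriction of $\varphi$ to this set is exactly $\mathcal{L}(\varphi)$, a permutation. This is the only tool I expect to need beyond the definition.

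For the easy direction (conjugate permutations $\Rightarrow$ shift equivalent), I would start with a bijection $\sigma : \gim(\varphi) \to \gim(\varphi')$ satisfying $\sigma \circ \mathcal{L}(\varphi) = \mathcal{L}(\varphi') \circ \sigma$, pick $N$ large enough that both $\varphi^N(J) = \gim(\varphi)$ and $(\varphi')^N(J') = \gim(\varphi')$, and set
\[
a = \sigma \circ \varphi^N : J \to J', \qquad b = \sigma^{-1} \circ (\varphi')^N : J' \to J.
\]
Since $a$ and $b$ land in the $\gim$'s, where the conjugation identity holds, a short computation yields $a \circ \varphi = \varphi' \circ a$, $b \circ \varphi' = \varphi \circ b$, and $b \circ a = \varphi^{2N}$, $a \circ b = (\varphi')^{2N}$, giving shift equivalence with $m = 2N$.

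For the converse, suppose $a, b, m$ provide a shift equivalence. The intertwining $a \circ \varphi^n = (\varphi')^n \circ a$ gives $a(\varphi^n(J)) \subseteq (\varphi')^n(J')$, hence (taking $n$ large) $a(\gim(\varphi)) \subseteq \gim(\varphi')$, and symmetrically for $b$. Now restrict everything to $\gim(\varphi)$: the identity $b \circ a = \varphi^m$ becomes $b \circ a|_{\gim(\varphi)} = \mathcal{L}(\varphi)^m$, which is a bijection of $\gim(\varphi)$. This forces $a|_{\gim(\varphi)} : \gim(\varphi) \to \gim(\varphi')$ to be injective; by the symmetric argument $b|_{\gim(\varphi')}$ is also injective. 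Finiteness then forces $|\gim(\varphi)| = |\gim(\varphi')|$ and makes both restrictions bijections. The conjugation identity $\mathcal{L}(\varphi') \circ a|_{\gim(\varphi)} = a|_{\gim(\varphi)} \circ \mathcal{L}(\varphi)$ is inherited directly from $a \circ \varphi = \varphi' \circ a$, so $a|_{\gim(\varphi)}$ is the desired conjugating bijection.

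The only slightly delicate point is the sufficiency direction, where one must resist the temptation to set $m = N$: the composition $b \circ a$ naturally produces $\varphi^{2N}$ because $\sigma^{-1}\sigma$ cancels inside, and a lower power cannot generally be achieved. Once one accepts $m = 2N$, everything is formal. In the necessity direction the main subtlety is keeping track of which map goes to which $\gim$, but bijectivity falls out cleanly from the fact that a composition equalling a bijection forces its factors (restricted appropriately) to be injective, and in the finite category injective plus equal cardinality is bijective.
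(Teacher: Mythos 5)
Your proof is correct in both directions, and the computations all check out: in particular $a=\sigma\circ\varphi^N$, $b=\sigma^{-1}\circ(\varphi')^N$ do give a shift equivalence with lag $m=2N$, and the restriction of $a$ to $\gim(\varphi)$ is forced to be a bijection onto $\gim(\varphi')$ conjugating $\mathcal{L}(\varphi)$ to $\mathcal{L}(\varphi')$. The organization, however, differs from the paper's. The paper isolates a reusable lemma: every finite map $\varphi$ is shift equivalent to its own induced permutation $\mathcal{L}(\varphi)$, via $a=\varphi^{n_0}\colon J\to\gim(\varphi)$ and $b$ the inclusion $\gim(\varphi)\hookrightarrow J$, with lag $m=n_0$. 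Combined with the already-noted fact that shift equivalence reduces to conjugation for bijections and with transitivity of shift equivalence, this immediately gives $\varphi\sim_{\mathrm{SE}}\varphi'\iff\mathcal{L}(\varphi)\sim_{\mathrm{SE}}\mathcal{L}(\varphi')\iff\mathcal{L}(\varphi)$ and $\mathcal{L}(\varphi')$ are conjugate. Your proof is what you get by unpacking that chain into explicit maps: composing the paper's three shift equivalences produces exactly your $a,b$ and accounts for the lag $2N$ (lags add under transitivity, and the conjugation in the middle has lag $0$), so your remark that $m=N$ is not generally attainable is the right count. In the other direction, the injectivity-and-cardinality argument you give is the content hidden inside the statement ``shift-equivalent bijections are conjugate,'' applied after you've located $a|_{\gim(\varphi)}$ and $b|_{\gim(\varphi')}$. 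Both approaches are sound; yours is self-contained at the cost of re-deriving what the paper has already set up, while the paper's modular version is shorter and its key lemma is worth having explicitly, since the identical argument is reused a few lines later for linear endomorphisms and their Leray reductions.
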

\begin{proof}
After the definition of shift equivalence we proved that this notion is equivalent to conjugation provided
that the endomorphisms are bijective. Therefore, we just need to show that a finite map is shift equivalent
to its induced permutation.

The set $J$ being finite, there exists $n_0$ such that $\varphi^{n_0}(J) = \gim(\varphi)$. Define $a = \varphi^{n_0} :
J \to \gim(\varphi)$ and let $b : \gim(\varphi) \to J$ be the inclusion map. It is easy to check that $a$ and $b$
provide a shift equivalence between $\varphi$ and $\mathcal{L}(\varphi)$.
\end{proof}

The category of finite-dimensional vector spaces and linear endomorphisms has a richer structure in which
shift equivalence will be completely described as well. As shown previously, for linear automorphisms it is equivalent
to conjugation.
Before we prove the characterization, we need to introduce several definitions concerning linear endomorphisms.
We stick our considerations to vector spaces over $\mathds{Q}$.
Let $H$ be a finite-dimensional vector space and $u : H \rightarrow H$ an endomorphism. The \emph{generalized
kernel} of $u$, $\gker(u)$, is the set of vectors which are eventually mapped onto 0, that means
$$\gker(u) = \bigcup_{n \ge 0} \ker(u^n).$$
The \emph{generalized image} of $u$, $\gim(u)$, is the largest invariant subspace of $H$ under $u$,
$$\gim(u) = \bigcap_{n \ge 0} \im(u^n).$$
It is well known (and easy to check) that there exists an integer $n_0$ such that
$$ \ker(u)\subsetneq \ker(u^2)\subsetneq\dots\subsetneq \ker(u^{n_0})= \ker(u^{n_0+1})$$and
$$ \im(u^{n_0+1})=\im(u^{n_0})\subsetneq \dots \subsetneq \im(u^{2})\subsetneq \im (u)$$ and that one has $$\gker(u) =\ker (u^n), \enskip \gim(u)=\gim(u^n)$$ for every $n\geq n_0$.
The subspaces $\gker(u)$ and $\gim(u)$ are obviously positively invariant.
The restriction of $u$ to the subspace $\gim(u)$ is called in the literature \emph{Leray reduction} and will
be denoted $\mathcal{L}(u)$. It is easy to prove that
$$H = \gker(u) \oplus \gim(u).$$

This decomposition gives a synthetic description of $u$:
on one hand, the restriction of $u$ to $\gker(u)$ is nilpotent,
and on the other hand, the restriction of $u$ to $\gim(u)$, that is, its Leray reduction $\mathcal{L}(u)$ is an automorphism.
Observe that $u_{\vert \gker(u)}$ being nilpotent, its trace is 0 and
$$\trace(u) = \trace(\mathcal{L}(u)).$$
Observe also that for any $n \geq 1$ one has
$$\gker(u^n) = \gker(u), \enskip \gim(u^n) = \gim(u), \enskip \mathcal{L}(u^n) = \mathcal{L}(u)^n,$$ hence
$$\trace(u^n) = \trace(\mathcal{L}(u)^n).$$

This remark shows that the Leray reduction of a linear endomorphism determines its trace
and the trace of its iterates.
More importantly, Leray reduction characterizes the shift equivalence class of a linear endomorphism.

Note that for linear endomorphisms in finite-dimensional vector spaces we have that $\gim(u) = \im(u^{n_0})$ for some positive integer $n_0$,
as happened for finite maps. The argument presented in Proposition \ref{prop:lerayreductionfinite}
can also be applied to prove that a linear endomorphism is shift equivalent to its Leray reduction.
Since Leray reductions are automorphisms, hence bijective, the next proposition follows.

\begin{proposition}
Two endomorphisms $u : G \to G$, $v : H \to H$ are shift equivalent if and only if their Leray reductions are conjugate.
\end{proposition}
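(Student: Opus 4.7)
The plan is to mirror the strategy of Proposition \ref{prop:lerayreductionfinite} in the linear setting: reduce the general case to the automorphism case, where shift equivalence and conjugation coincide. Concretely, since shift equivalence is an equivalence relation and, for bijective endomorphisms, is the same as conjugation (by the motivational computation given right after the definition of shift equivalence, using $h = a \circ u^{-m}$), it suffices to establish the following intermediate claim: every linear endomorphism $u : G \to G$ is shift equivalent to its Leray reduction $\mathcal{L}(u) : \gim(u) \to \gim(u)$. Granted this, transitivity gives $u \sim v$ if and only if $\mathcal{L}(u) \sim \mathcal{L}(v)$, and since the Leray reductions are automorphisms, the latter is the same as conjugacy.

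To prove the intermediate claim, I would use the stabilization fact recalled just before the statement: there exists $n_0 \ge 1$ such that $\im(u^{n_0}) = \gim(u)$. Define $a : G \to \gim(u)$ as $u^{n_0}$ (which does land in $\gim(u)$ and is well-defined with this codomain), and let $b : \gim(u) \to G$ be the inclusion. Then
\[
b \circ a = u^{n_0}, \qquad a \circ b = \mathcal{L}(u)^{n_0},
\]
the second identity holding because $u$ restricted to the invariant subspace $\gim(u)$ is exactly $\mathcal{L}(u)$. The intertwining relations $a \circ u = \mathcal{L}(u) \circ a$ and $u \circ b = b \circ \mathcal{L}(u)$ also follow from the invariance of $\gim(u)$ under $u$. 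With $m = n_0$, this is a shift equivalence between $u$ and $\mathcal{L}(u)$.

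There is no real obstacle here; the argument is a direct transcription of the finite-set proof into the linear category, with the role of $\gim(\varphi)$ played by $\gim(u)$ and the existence of a stabilizing exponent $n_0$ guaranteed by the finite-dimensionality of $G$. The only point deserving a line of verification is that the two intertwinings and the two composition identities are genuinely compatible with the codomain choices, which is immediate once one notes $\gim(u)\subset G$ with $u$-invariance.
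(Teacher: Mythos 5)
Your proof is correct and takes the same approach the paper does: the paper explicitly remarks that the argument of Proposition~\ref{prop:lerayreductionfinite} ``can also be applied to prove that a linear endomorphism is shift equivalent to its Leray reduction,'' from which the proposition follows since Leray reductions are automorphisms. Your write-up just makes that transcription explicit.
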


As a consequence, linear shift equivalent endomorphisms have equal traces. This trivial observation
together with the following proposition show how some trace computations will be done in this article.

\begin{lemma}\label{lem:gimgker}
Let $H$ be a finite-dimensional vector space and $u: H\to H$ an endomorphism. We suppose that:
\begin{itemize}
\item $F$ is a subspace that is invariant under $u$ and included in its generalized kernel.
\item $G$ is a subspace that is invariant under $u$ and that contains both $F$ and $\gim(u)$.
\end{itemize}
Then, the naturally induced endomorphism $\widehat u: G/F\to G/F$ is shift equivalent to $u$. In particular,
their traces are equal.
\end{lemma}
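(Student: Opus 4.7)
The plan is to exhibit an explicit shift equivalence between $u$ and $\widehat{u}$ by mimicking the construction used to shift-equivalate $u$ with its Leray reduction $\mathcal{L}(u)$. Concretely, I will build morphisms $a:H\to G/F$ and $b:G/F\to H$ obtained from a sufficiently high iterate of $u$, and verify the four defining relations of shift equivalence.

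First, I would fix an integer $n_0\ge 1$ large enough that $\ker(u^{n_0})=\gker(u)$ and $\im(u^{n_0})=\gim(u)$. The two hypotheses of the lemma are then exploited in complementary ways: the inclusion $F\subset\gker(u)=\ker(u^{n_0})$ ensures that $u^{n_0}$ vanishes on $F$, while the inclusion $\gim(u)=\im(u^{n_0})\subset G$ ensures that $u^{n_0}(H)\subset G$. These two facts make the following definitions legitimate: set $a(x)=u^{n_0}(x)+F\in G/F$ for $x\in H$, and $b(y+F)=u^{n_0}(y)\in H$ for $y\in G$. The map $a$ is well defined because $u^{n_0}(H)\subset \gim(u)\subset G$, and $b$ is well defined because if $y-y'\in F\subset\ker(u^{n_0})$ then $u^{n_0}(y)=u^{n_0}(y')$.

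The four conditions of shift equivalence are then routine. The intertwining relations $a\circ u=\widehat u\circ a$ and $b\circ\widehat u=u\circ b$ follow directly from the definitions and the fact that $u^{n_0}$ commutes with $u$. For the composite identities, one computes
\begin{equation*}
(b\circ a)(x)=u^{n_0}\bigl(u^{n_0}(x)\bigr)=u^{2n_0}(x), \qquad (a\circ b)(y+F)=u^{2n_0}(y)+F=\widehat u^{\,2n_0}(y+F),
\end{equation*}
so the shift equivalence holds with $m=2n_0$. The equality of traces $\trace(u^n)=\trace(\widehat u^{\,n})$ for all $n\ge 1$ then follows from the preceding Proposition, since shift equivalent endomorphisms have conjugate Leray reductions, hence identical trace sequences.

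There is no real obstacle in this argument; the only subtle point is verifying that the two maps $a$ and $b$ are well defined, which is where both hypotheses enter (each one handling exactly one of the two maps). Everything else is bookkeeping, and I would be careful only to pick $n_0$ once at the beginning so that the stabilisation of both the kernel and the image filtrations of $u$ is available simultaneously.
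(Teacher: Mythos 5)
Your proof is correct, and it takes a genuinely different route from the one in the paper. You construct an explicit shift equivalence: picking $n_0$ so that $\ker(u^{n_0})=\gker(u)$ and $\im(u^{n_0})=\gim(u)$, you use $\gim(u)\subset G$ to make $a(x)=u^{n_0}(x)+F$ land in $G/F$, and $F\subset\ker(u^{n_0})$ to make $b(y+F)=u^{n_0}(y)$ well defined; the intertwining relations and the composites $b\circ a=u^{2n_0}$, $a\circ b=\widehat u^{\,2n_0}$ are then immediate. The paper instead bypasses the definition of shift equivalence altogether and works through Leray reductions: it checks that $\widehat u^{\,n}(x+F)=u^n(x)+F$, deduces $\gim(\widehat u)=\gim(u)+F$, and shows that the quotient projection $\pi:G\to G/F$ restricts to an isomorphism $\gim(u)\to\gim(\widehat u)$ (using $\gim(u)\cap F\subset\gim(u)\cap\gker(u)=\{0\}$) which conjugates $\mathcal{L}(u)$ to $\mathcal{L}(\widehat u)$; it then invokes the preceding Proposition (shift equivalence $\Leftrightarrow$ conjugate Leray reductions). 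Your approach has the advantage of being self-contained and elementary — you never touch Leray reductions until the very last step, and only to conclude the trace equality — whereas the paper's approach is shorter given the machinery already in place and makes the isomorphism between the two Leray reductions explicit, which is information you don't directly recover. Both arguments use the same two hypotheses in the same dual roles (one controls the kernel, the other the image), so the conceptual content is the same even though the mechanics differ.
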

\begin{proof}
Since $F$ and $G$ are invariant under $u$, the map $\widehat{u}$ is well-defined and $\widehat{u}(x + F) = u(x) + F$
for any $x \in G$. It follows that, for any $n \ge 0$, $\widehat{u}^n(x + F) = u^n(x) + F$, hence
$\gim(\widehat{u}) = \gim(u) + F$.
The projection map $\pi: G \to G/F$ induces an isomorphism between $\gim(u)$ and $\gim(\widehat{u})$ because
$\gim(u) \cap F \subset \gim(u) \cap \gker(u) = \{0\}$. Since $\pi \circ u_{\vert G} = \widehat{h} \circ \pi$,
we deduce that $\pi_{\vert \gim(u)}$ is a conjugation between $\mathcal{L}(u)$ and $\mathcal{L}(\widehat{u})$.
\end{proof}

Clearly, any linear endomorphism and its Leray reduction have the same non-zero complex eigenvalues, counted with multiplicity.
Therefore, the spectra, except for the eigenvalue 0, of linear endomorphisms is invariant under shift equivalence.
However, two linear automorphisms having equal spectra are not always conjugate because their Jordan
canonical forms may differ. Thus, the following definition is slightly weaker than
shift equivalence.

\begin{definition}\label{def:spectrumequivalent}
Two endomorphisms $u : G \to G$ and $v : H \to H$ are \emph{spectrum equivalent} if they have
the same non-zero complex eigenvalues, counted with multiplicity, or, equivalently,
$$\trace(u^n) = \trace(v^n)$$
for every positive integer $n$.
\end{definition}

\subsection{Permutation endomorphisms}\label{subsec:permendo}

We now introduce a class of maps which will help us to describe the first homological discrete Conley index.
The definition has a truly combinatorial taste, as the idea behind it will be to describe combinatorially
the dynamics around an isolated invariant acyclic continuum.

\begin{definition}
A \emph{permutation endomorphism} $u : H \to H$ is an endomorphism for which there exists a
map $\varphi : J \to J$ over a finite set $J$, where $\# J = dim(H)$,
 and a basis $\{e_j\}_{j \in J}$ of $H$ such that
$u(e_j) = e_{\varphi(j)}$. If the map $\varphi$ is bijective we say $u$ is a \emph{permutation automorphism}.
\end{definition}

Once we move onto higher homological indices the notion of permutation endomorphism is too rigid, but a generalization
of it will serve us to describe them.

\begin{definition}\label{def:dominatedendo}
An endomorphism $u : H \to H$ is \emph{dominated} by a finite map $\varphi : J \to J$ if there exists a decomposition
$H = \bigoplus_{j \in J} H_j$ such that $u(H_j) \subset H_{\varphi(j)}$ for every $j \in J$.
\end{definition}

Observe that if $u$ is dominated by a fixed-point free finite map $\varphi$, then $\trace(u) = 0$. This trivial
remark indicates the way we will prove Theorem B.

In the computations, reduced homology groups will naturally appear. As a consequence, permutation endomorphisms
do not exactly provide the required description of the first homological index and we need to include the
following definition.

\begin{definition}
A {\em reduced permutation endomorphism (automorphism)} is obtained from a permutation endomorphism (automorphism) $u: H\to H$
associated to a basis $\{e_j\}_{j\in J}$ and a map $\varphi: J\to J$ by taking the restriction $v$
of $u$ to $\ker(\delta)$ where $\delta :H\to \mathds{Q}$ is the linear form that sends each $v_j$ to $1$.
\end{definition}
As $u$ and $v$ are completely determined by $\varphi$, up to conjugacy, we will sometimes say that $\varphi$ defines
the permutation endomorphism $u$ and the reduced permutation endomorphism $v$.
\begin{proposition}\label{prop:reducedpermutationtrace}
The reduction $v$ of a permutation endomorphism $u$ satisfies $$\trace(v^n) = \trace(u^n) - 1,$$ for every $n \ge 1$.
\end{proposition}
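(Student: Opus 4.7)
The plan is to exhibit $v$ as fitting into a short exact sequence in which $u$ descends to the identity on the one-dimensional quotient, and then use additivity of trace on such sequences.

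First I would observe that the linear form $\delta$ satisfies $\delta \circ u = \delta$. Indeed, for each basis vector $e_j$, one has $\delta(u(e_j)) = \delta(e_{\varphi(j)}) = 1 = \delta(e_j)$, and the identity extends by linearity. In particular, $\ker(\delta)$ is invariant under $u$, so the restriction $v = u|_{\ker(\delta)}$ is well-defined, and $u$ induces the identity map on the quotient $H/\ker(\delta) \cong \mathds{Q}$. The short exact sequence
$$0 \to \ker(\delta) \to H \xrightarrow{\delta} \mathds{Q} \to 0$$
together with the compatible endomorphisms $v$, $u$, $\mathrm{id}_\mathds{Q}$ then yields, by the additivity of the trace on short exact sequences,
$$\trace(u) = \trace(v) + \trace(\mathrm{id}_\mathds{Q}) = \trace(v) + 1.$$

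To pass to iterates, I would check that the $n$-th power $u^n$ is itself the permutation endomorphism associated to the iterate $\varphi^n : J \to J$ with respect to the same basis $\{e_j\}_{j \in J}$: a straightforward induction gives $u^n(e_j) = e_{\varphi^n(j)}$. Hence $u^n$ is of exactly the same form as $u$, with $\varphi$ replaced by $\varphi^n$, and its restriction to $\ker(\delta)$ (which equals $v^n$ since $\ker(\delta)$ is $u$-invariant) is the reduction of $u^n$. Applying the previous paragraph to $u^n$ in place of $u$ then yields $\trace(u^n) = \trace(v^n) + 1$, as desired.

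There is essentially no obstacle here; the whole argument rests on the single verification $\delta \circ u = \delta$, which is automatic because $u$ sends each basis vector to another basis vector and $\delta$ assigns the same value to all of them. Everything else is a formal consequence.
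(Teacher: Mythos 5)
Your argument is correct and follows essentially the same route as the paper's: both exploit the fact that $\delta\circ u=\delta$ (equivalently $u^n(e_j)-e_j\in\ker(\delta)$), so that in any basis extending a basis of $\ker(\delta)$ the matrix of $u^n$ is block upper triangular with $v^n$ in one block and a single $1$ in the other. Your phrasing via the short exact sequence and additivity of trace is just a cleaner packaging of the paper's basis-extension computation.
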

\begin{proof}
Consider a basis of $\ker(\delta)$ and a vector $e_j$ which extends it to a basis of $H$. Then, for every $n \ge 1$,
$\delta(u^n(e_j)) = 1$, hence $u^n(e_j) - e_j \in \ker(\delta)$ and the formula for the traces follows.
\end{proof}

The final part of this section is devoted to show how the traces of permutation endomorphisms
and its iterates can be computed. We will deduce that this sequence of traces satisfies the so-called Dold's congruences, which were first introduced in \cite{dold}.

\begin{definition}
A sequence of integers $I = \{I_n\}_{n \ge 1}$ is said to satisfy \emph{Dold's congruences} if, for every $n \ge 1$,
$$\sum_{k|n} \mu(n/k)I_k \equiv 0 \;(\mathrm{mod} \; n).$$
\end{definition}
The Möbius function $\mu$ assigns to each natural number $n$ a value $-1, 0$ or $1$, depending
on its prime decomposition. If a factor appears at least twice in the prime decomposition then $\mu(n) = 0$,
otherwise $\mu(n) = (-1)^s$, where $s$ is the number of prime factors of $n$.
 Dold's congruences can be described in a more elementary way.
Consider the following \emph{normalized sequences} $\sigma^k = \{\sigma^k_n\}_{n \ge 1}$, where
$$
\sigma^k_n =
\begin{cases}
k & \text{if } n \in k \mathds{N} \\
0 & \text{otherwise.}
\end{cases}
$$
Every sequence $I = \{I_n\}_{n \ge 1}$ can be written as a formal combination of
normalized sequences, $I = \sum_{k \ge 1} a_k \sigma^k$. A sequence $I$ satisfies Dold's congruences
if and only if all coefficients $a_k$ are integers. As observed by Babenko and Bogatyi, see \cite{babenko},
 a sequence satisfying Dold's congruences is periodic if and only if it can be written
as a finite combination of normalized sequences.

Let us conclude this section with the following result:

\begin{proposition}\label{prop:permutationendotrace}
Let $u$ be a permutation endomorphism defined by a finite map $\varphi$.
Then, the sequence $\{\trace(u^n)\}_{n \ge 1}$ is equal
to the sequence $\{\#\Fix(\varphi^n)\}_{n \ge 1}$, where $\#\Fix(\varphi^n)$ denotes the number of fixed points of $\varphi^n$. In addition, it satisfies Dold's congruences.
\end{proposition}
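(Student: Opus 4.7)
The plan is to handle the two claims separately, since the trace computation is essentially a bookkeeping exercise, and Dold's congruences will then follow from a standard Möbius inversion once we check that fixed points of $\varphi^n$ actually organise themselves into cycles.

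For the first assertion, I would simply compute in the given basis $\{e_j\}_{j\in J}$. Since $u(e_j) = e_{\varphi(j)}$, an easy induction gives $u^n(e_j) = e_{\varphi^n(j)}$. The matrix of $u^n$ therefore has a $1$ in position $(\varphi^n(j),j)$ and a $0$ elsewhere, so the diagonal entry indexed by $j$ equals $1$ if $\varphi^n(j)=j$ and $0$ otherwise. Summing yields $\trace(u^n)=\#\Fix(\varphi^n)$.

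For Dold's congruences, the key point — and the only place where one has to be a little careful, since $\varphi$ is only a map and not assumed bijective — is that the restriction of $\varphi$ to $\Fix(\varphi^n)$ is a bijection onto itself. Indeed, if $\varphi^n(j)=j$, then the orbit $\{j,\varphi(j),\dots,\varphi^{n-1}(j)\}$ is $\varphi$-invariant and $\varphi$ acts cyclically on it, so every element of $\Fix(\varphi^n)$ lies on a genuine periodic cycle of some length $d$ dividing $n$. Writing $P_d$ for the number of points of $J$ lying on a cycle of exact length $d$, one obtains
$$\#\Fix(\varphi^n) \;=\; \sum_{d\mid n} P_d.$$

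From here, Möbius inversion gives
$$P_n \;=\; \sum_{d\mid n} \mu(n/d)\,\#\Fix(\varphi^d),$$
and since the set of points of exact period $n$ is partitioned into disjoint cycles of cardinality $n$, the integer $P_n$ is divisible by $n$. This is precisely the congruence $\sum_{d\mid n}\mu(n/d)\,\trace(u^d)\equiv 0\pmod n$ defining Dold's congruences. The main (mild) obstacle is the verification that $\varphi$ permutes $\Fix(\varphi^n)$ even though it is not globally injective; once that is in place, the proof is a one-line application of Möbius inversion.
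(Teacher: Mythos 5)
Your proof is correct and is essentially the paper's argument: both rest on the observation that $\#\Fix(\varphi^n)$ counts points lying on $\varphi$-cycles of length dividing $n$, so that $\#\Fix(\varphi^n)=\sum_{k\mid n}k\,a_k$ where $a_k$ is the number of $k$-cycles. The paper then simply reads this off as the decomposition $\{\#\Fix(\varphi^n)\}_n=\sum_k a_k\sigma^k$ with integer coefficients and invokes the previously stated equivalence between Dold's congruences and integrality of the $\sigma^k$-coefficients, whereas you carry out the Möbius inversion explicitly to obtain $n\mid\sum_{d\mid n}\mu(n/d)\#\Fix(\varphi^d)$; these are two presentations of the same computation. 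Your care in checking that $\varphi$ restricts to a bijection of $\Fix(\varphi^n)$, even though $\varphi$ itself need not be injective, is well placed and is the only subtle point.
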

\begin{proof}
The first statement of the proposition is obvious and we only need to prove the last statement. If $a_k$ denotes the number of $k$-periodic orbits
of the map $\varphi$ it is clear that
$$\#\Fix(\varphi^n)= \sum_{k | n} k \cdot a_k$$ and
it follows that $$\{\#\Fix(\varphi^n)\}_{n \ge 1} = \sum_{k \ge 1} a_k \sigma^k.$$
\end{proof}

\subsection{Acyclic continua}\label{subsec:acyclic}

The class of compact subsets of $\mathds{R}^d$ to which our results apply is that of acyclic continua.
In the literature, the term acyclic is often used to call sets such that
all its reduced homology groups vanish. Since we deal with compact sets,
the more appropriate homology theory for our setting is \v{C}ech homology and we will say that a
 set $K$ is \emph{acyclic} if $\check{H}_r(K) = 0$ for all $r \ge 1$. All the homology groups appearing
 in this article are equipped with rational coefficients.

The following proposition gives an alternative definition of acyclic continuum, which is the one we will use later.

\begin{proposition}\label{prop:acyclic}
A continuum $K \subset \mathds{R}^d$ is acyclic if and only if
for every neighborhood $U$ of $K$, there exists another neighborhood $V \subset U$ of $K$ such that
the inclusion-induced maps $H_r(V) \to H_r(U)$ are trivial for every $r \ge 1$.
\end{proposition}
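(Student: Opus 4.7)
The plan is to translate both implications into statements about the inverse system of rational homology groups of a neighborhood basis of $K$. The key ingredient is the continuity of \v{C}ech homology for compact subsets of an ANR: for any decreasing cofinal sequence of neighborhoods $U_1 \supset U_2 \supset \dots$ of $K$ with $\bigcap_n U_n = K$, one has $\check{H}_r(K) \cong \varprojlim_n H_r(U_n)$.

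For the ``if'' direction, I would iterate the hypothesis to build a decreasing neighborhood basis $\{U_n\}$ in which each bonding map $H_r(U_{n+1}) \to H_r(U_n)$ vanishes for every $r \ge 1$. Starting from any basis $\{U_n'\}$, the hypothesis provides $V_1 \subset U_1'$ killing each $H_r$; choose $U_2 \subset V_1 \cap U_2'$ with the analogous property, and iterate. All bonding maps in the resulting tower are then zero, so $\check{H}_r(K) = \varprojlim_n H_r(U_n) = 0$.

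For the ``only if'' direction, given any neighborhood $U$ of $K$, choose a decreasing sequence of compact polyhedral neighborhoods $P_1 \supset P_2 \supset \dots$ of $K$ with $P_1 \subset U$ and $\bigcap_n P_n = K$; such a sequence exists because compact subsets of $\mathds{R}^d$ admit bases of polyhedral neighborhoods. Each $H_r(P_n;\mathds{Q})$ is then finite-dimensional and vanishes for $r > d$, and the hypothesis rewrites as $\varprojlim_n H_r(P_n) = 0$. For fixed $n$ and $r$, the images $\im(H_r(P_m) \to H_r(P_n))$ form a descending chain in the finite-dimensional space $H_r(P_n)$ and stabilize to a subspace $A_n^r$; the induced bonding maps $A_{n+1}^r \to A_n^r$ are surjective, so the vanishing of $\varprojlim A_n^r$ forces $A_n^r = 0$ for $n$ large. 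Hence for such an $n$ there exists $m(r) \ge n$ with $H_r(P_{m(r)}) \to H_r(P_n)$ zero, and taking $V = P_m$ with $m = \max_{1 \le r \le d} m(r)$ simultaneously kills $H_r(V) \to H_r(U)$ for every $r \ge 1$.

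The main delicate step will be the passage from ``$\varprojlim = 0$'' to ``each bonding map is eventually zero'' -- a finite-dimensional Mittag-Leffler argument that relies essentially on the finite-dimensionality of the $H_r(P_n)$, which is why one must first shrink to compact polyhedral neighborhoods. The dimension bound $r \le d$ is then what allows the conclusion to be upgraded from a single $r$ to all $r \ge 1$ at once.
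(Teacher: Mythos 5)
Your argument is correct and follows essentially the same route as the paper's proof: iterate the hypothesis and invoke continuity of \v{C}ech homology for the ``if'' direction, and for the converse pass to compact polyhedral neighborhoods, stabilize the descending chain of images using finite-dimensionality, and observe that the stabilized subspaces form a surjective inverse system whose limit embeds in $\varprojlim H_r(P_n) = 0$, forcing them all to vanish. The paper phrases this last step by directly constructing a nonzero thread rather than citing Mittag--Leffler by name, but the content is identical; your ``$A_n^r = 0$ for $n$ large'' should really read ``for all $n$'' (surjectivity propagates nonvanishing to every level), though this does not affect the conclusion.
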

\begin{proof}
Let us prove sufficiency first. Inductively, construct a basis of neighborhoods $\{U_n\}_{n \ge 1}$ of $K$
such that the inclusion-induced maps $H_r(U_{n+1}) \to H_r(U_n)$ are trivial for all $n, r \ge 1$.
The continuity property of \v{C}ech homology implies that $\check{H}_r(K)$ is isomorphic to
the inverse limit of the inverse system formed by the groups $\{H_r(U_n)\}_{n \ge 1}$ together
with the linear maps $p_{m,n} : H_r(U_m) \to H_r(U_n)$, for $m \ge n$, induced by inclusion.
Since all these maps are trivial by assumption, the inverse limit is zero and all
\v{C}ech homology groups are trivial for $r \ge 1$, hence $K$ is acyclic.

Conversely, take a basis of neighborhoods $\{U_n\}_{n \ge 1}$ of $K$ composed of compact polyhedra.
Since $K$ is acyclic, $\varprojlim H_r(U_n) = 0$.
Given any $n \ge 1$, consider the nested sequence $\{\im(p_{m,n})\}_{m \ge n}$ of vector subspaces of $H_r(U_n)$
 and denote their intersection $A_n$. Since $H_r(U_n)$ is finite dimensional, there exists $n_0 \ge n$ such that
$\im(p_{n_0, n}) = A_n$.
For every $k \ge m \ge n$ we have that $p_{k,n} = p_{k, m} \circ p_{m,n}$, hence the subspaces $A_n$
are mapped onto each other, that is, $p_{m,n}(A_m) = A_n$ for every $m \ge n$.
Thus, for a given $x \in A_n$ it is possible to construct a sequence $\{x_m\}_{m \ge n}$
such that $x_n = x$ and $p_{m+1, m}(x_{m+1}) = x_m$ for any $m \ge n$. As a consequence, if any $A_n \neq \{0\}$ the inverse limit of
the sequence $\{H_r(U_n)\}_{n \ge 1}$ can not be trivial. Therefore, for any $n$ one can find $n_0$ so that
$\im(p_{n_0,n}) = \{0\}$, that is, the map $p_{n_0,n} : H_r(U_{n_0}) \to H_r(U_n)$ is the zero map.
After taking the maximum of a finite set of bounds, we can assume the latter holds for every $r \ge 1$.
For every element $U$ of $\{U_n\}_{n \ge 1}$ we have proved that there is another element of the sequence, $V$,
such that $H_r(V) \to H_r(U)$ is trivial for every $r \ge 0$. Since $\{U_n\}_{n \ge 1}$ is a basis of neighborhoods
of $K$, the proof is finished.
\end{proof}

The previous proposition shows that the isomorphism between $\check{H}_r(K)$ and the trivial group extends
to an isomorphism between the pro-group formed by the groups $H_r(U_n)$ and the inclusion-induced maps and the
zero pro-group, for any basis of open neighborhoods $\{U_n\}_{n \ge 1}$ of $K$. This ultimately comes from the fact that, since
we use rational coefficients, the sequence $\{H_r(U_n)\}_{n \ge 1}$ satisfies the Mittag-Leffler condition.
This result was first proved, in a more general setting, in \cite{Keesling} (see also \cite{MardesicSegal}).

\subsection{Isolating blocks and filtration pairs}

Our aim in this subsection is to introduce some topological notions that
are necessary to describe discrete Conley index theory.
Throughout the article we will often work with the relative topology of closed
subsets of $\mathds{R}^d$ or, more generally, manifolds with boundary $N$.
Given two subsets $A \subset B$ of $N$ such that $B$ is closed,
 $\overline{A}$ will denote the closure of $A$,
$\int(A)$ and $\int_B(A)$ the interior of $A$ and its interior relative to $B$ and
$\partial A = \overline{A} \setminus \int(A)$ and $\partial_B A = \overline{A} \cap (\overline{B \setminus A})
= \overline{A} \setminus \int_B(A)$,
 the boundary and relative boundary of $A$, respectively.

Let $M$ be a $d$-manifold without boundary. A \emph{regular decomposition} $(M_1, M_2)$ of $M$
is a pair of $d$-submanifolds with boundary such that $M_1 \cup M_2 = M$ and $\partial M_1 = \partial M_2$.
Similarly, a regular 2-decomposition $(N_1, N_2)$ of a $d$-manifold with boundary $N$
is a pair of $d$-manifolds with boundary such that there exist regular
decompositions $(M_{1,1}, M_{1,2})$ of $\partial N_1$ and $(M_{2,1}, M_{2,2})$ of $\partial N_2$ satisfying
$$N_1 \cup N_2 = N, \;\;\; N_1 \cap N_2 = M_{1,2} = M_{2,1}.$$
In that case $\partial N = M_{1,1} \cup M_{2,2}$ and every connected component of $N_1 \cap N_2$
defines an element of the group $H_{d-1}(N, \partial N)$.
We will need also the notion of regular 3-decomposition $(N_1, N_2, N_3)$ of $N$.
It consists of a triple of $d$-manifolds with boundary such that there exists a regular decomposition $(M_{1,1}, M_{1,2})$ of $\partial N_1$,
a regular decomposition $(M_{3,2}, M_{3,3})$ of $\partial N_3$, a regular decomposition $(M_{1,3}, M_{2,2})$ of $\partial N_2$ and a partition $M_{1,3}=M_{1,2}\sqcup M_{2,3}$ in open and closed submanifolds such that
$$N_1\cup N_2\cup N_3 = N, \quad N_1\cap N_2=M_{1,2}=M_{2,1}, \quad N_2\cap N_3=M_{2,3}=M_{3,2}, \quad N_1\cap N_3=\emptyset.$$
\noindent
In that case $\partial N= M_{1,1}\cup M_{2,2}\cup M_{3,3}$ and every connected component of $M_{1,3}$ defines an element of $H_{d-1}(N,\partial N)$.

A pair $(N, L)$ of subsets of $\mathds{R}^d$ is said to be \emph{regular} if $N$ is a $d$-manifold with bicollared boundary and
$(\overline{N \setminus L}, L)$ is a regular 2-decomposition of $N$.
Similarly, a triple $(N, L, L')$ is \emph{regular} provided that $(L', \overline{L \setminus L'}, \overline{N \setminus L})$
is a regular 3-decomposition of $N$.
Note that for a regular pair $(N, L)$, the quotient space $N/L$ is guaranteed to be an ANR.
Consequently, there is a natural isomorphism $\widetilde{H}_*(N/L) \sim H_*(N, L)$ between
reduced and relative homology groups.

Next, we define some dynamical concepts.
Consider a local homeomorphism $f$ of $\mathds{R}^d$.
Given a compact set $N$ included in its domain, the \emph{stable set} of $f$ in $N$, $\Lambda^+(f, N)$,
consists of the set of points of $N$ whose forward orbit remains in $N$,
$$\Lambda^+(f, N) = \bigcap_{n \ge 0}f^{-n}(N).$$
Similarly, the \emph{unstable set} $\Lambda^-(f, N)$ of $f$ in $N$ consists of the set of points of $N$ whose backward orbit
remains in $N$,
$$\Lambda^-(f, N) = \bigcap_{n \ge 0}f^{n}(N).$$
Of course, the largest subset of $N$ invariant under $f$ is exactly $\Lambda^+(f, N) \cap \Lambda^-(f, N)$.
Additionally, if $N$ is compact so are the stable and unstable sets.

Denote $\Inv(N)$ the maximal invariant subset of a compact set $N \subset \mathds{R}^d$.
In the case $\Inv(N) \subset \int(N)$, we say that $N$ is an \emph{isolating neighborhood}.
A compact invariant set $X$ for which there exists an isolating neighborhood $N$ such that $\Inv(N) = X$ is called
an isolated invariant set.
Note that for any isolating neighborhood $N$ of $X$ we have $X = \Lambda^+(f,N) \cap \Lambda^-(f,N)$.
Recall that if $X$ has a compact neighborhood $N$ such that
$\bigcap_{n \in \mathds{N}} f^n(N) = X$ or $\bigcap_{n \in \mathds{N}} f^{-n}(N) = X$ then $X$
is called an attractor or a repeller, respectively.
Finally, a compact set $N$ which presents no interior ``discrete tangencies'', i.e.
$$f^{-1}(N) \cap N \cap f(N) \subset \int(N)$$
is called an \emph{isolating block}.
Notice that every isolating block is also an isolating neighborhood.
It is known that every isolated invariant set admits a fundamental system of neighborhoods which are isolating blocks,
see for example \cite{easton}.
From the definition, we see that if $N'$ is close enough to $N$ then $N'$ is also an isolating block.
Thus, isolating blocks can be chosen to be $d$-manifolds with bicollared boundary.
Now, we introduce a concept which is due to Franks and Richeson, see \cite{franksricheson}.

\begin{definition}
A pair of compact sets $(N, L)$ is a \emph{filtration pair} for the invariant set $X$
provided $N$ and $L$ are the closure of their interiors and the following
properties are satisfied:
\begin{itemize}
\item $\overline{N \setminus L}$ is an isolating neighborhood of $X = \Inv(\overline{N \setminus L})$.
\item $L$ is a neighborhood of the exit set of $N$, $N^- = \{x \in N: \; f(x) \notin \int(N)\}$.
\item $f(L) \cap (\overline{N \setminus L}) = \emptyset$.
\end{itemize}
\end{definition}

This is the object which will be most used throughout the article. The following proposition,
based in the robustness of the definition of filtration pair,
shows that it can always be assumed to satisfy nice local properties.

\begin{proposition}
Regular filtration pairs $(N, L)$ exist for any isolated invariant set $X$.
\end{proposition}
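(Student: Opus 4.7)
The plan is to start with an isolating block $N$ for $X$ chosen to be a compact $d$-submanifold of $\mathds{R}^d$ with bicollared boundary, whose existence is guaranteed by Easton's theorem (as cited in the text). From such an $N$, I will produce a codimension-zero manifold neighborhood $L \subset N$ of the exit set $N^-$ obtained as a thin inward thickening, through the bicollar, of a regular neighborhood of $N^- \cap \partial N$ in $\partial N$. The regularity of the resulting pair $(N, L)$ will be automatic from the manifold construction, while the three filtration pair axioms will hold provided the thickening is chosen thin enough.

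Concretely, fix a bicollar $c : \partial N \times (-1,1) \hookrightarrow \mathds{R}^d$ with $c(\partial N \times [0,1)) \subset N$, and pick a compact codimension-zero submanifold with bicollared boundary $W \subset \partial N$ whose interior in $\partial N$ contains the compact set $N^- \cup \bigl(f(N^-) \cap \partial N\bigr)$. Such a $W$ is obtained as a smooth or PL regular neighborhood of this compact set in $\partial N$. Then set
$$L = c\bigl(W \times [0, \epsilon]\bigr)$$
for a small $\epsilon > 0$, and round the codimension-two corner along $c(\partial W \times \{\epsilon\})$. The result is a compact $d$-submanifold with bicollared boundary such that $(\overline{N \setminus L}, L)$ forms a regular $2$-decomposition of $N$, as required by the definition of a regular pair.

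Finally, I would verify the three filtration-pair conditions. That $L$ is a neighborhood of $N^-$ holds by construction; that $\overline{N \setminus L}$ is an isolating neighborhood of $X$ with $\Inv(\overline{N \setminus L}) = X$ follows because, for $\epsilon$ small, $L$ lies in a tiny collar of $\partial N$ and hence is disjoint from $X \subset \int(N)$, so the maximal invariant subset cannot change. The delicate condition is $f(L) \cap \overline{N \setminus L} = \emptyset$, and this is where I expect the main technical obstacle. By uniform continuity of $f$ on the compact set $N$, taking $\epsilon$ small forces $f(L)$ to lie in any preassigned neighborhood of $f(\partial N \cap L) = f(W)$; but by the definition of $N^-$, every point of $f(N^-)$ either leaves $N$ or lands on $\partial N$, and this second case was absorbed into $\int_{\partial N}(W) \subset L$ when $W$ was chosen. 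The delicate point is that $L$ contains points of $\int(N)$ sitting inside the collar, whose images are a priori not controlled by the exit-set condition; continuity plus the extra room built into $W$ is needed to ensure $f(L) \subset L \cup (\mathds{R}^d \setminus N)$ for $\epsilon$ small enough, which is exactly the required disjointness.
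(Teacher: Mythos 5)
Your construction of $L$ as a thin collar $c\bigl(W\times[0,\epsilon]\bigr)$ over a submanifold $W\subset\partial N$ rests on the implicit assumption that the exit set $N^-=\{x\in N:f(x)\notin\int(N)\}$ lies in $\partial N$. That assumption is false in the discrete setting, even for an isolating block. For example, $N=[-1,1]$ and $f(x)=3x$ give $f^{-1}(N)\cap N\cap f(N)=[-1/3,1/3]\subset\int(N)$, so $N$ is an isolating block, yet $N^-=[-1,-1/3]\cup[1/3,1]$ reaches deep into $\int(N)$. Unlike a flow, a homeomorphism can throw interior points directly outside $N$ in a single step, so exit points need not sit on $\partial N$. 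Consequently your $L$, which for small $\epsilon$ is contained in a thin neighborhood of $\partial N$, cannot be a neighborhood of $N^-$ in $N$, and the second axiom in the definition of a filtration pair already fails. The set $W\subset\partial N$ you define (a regular neighborhood of ``$N^-\cup(f(N^-)\cap\partial N)$'') only makes literal sense after replacing $N^-$ by $N^-\cap\partial N$, which confirms that you are working with the wrong exit set.

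There is also a second gap, which you flag yourself: you do not actually verify $f(L)\cap\overline{N\setminus L}=\emptyset$, you only describe where you expect the obstacle. The ingredient you are missing is exactly the step the paper isolates. From $f(N^-)\cap N\subset\partial N$ and the block condition $f^{-1}(N)\cap N\cap f(N)\subset\int(N)$ one gets that every $x\in f(N^-)\cap N$ satisfies $f(x)\notin N$; thus $f(N^-)\cap N$ lies in the open (in $N$) set $\{y\in N:f(y)\notin N\}\subset N^-$, i.e.\ in $\int_N(N^-)$, and hence $f(N^-)$ and $\overline{N\setminus N^-}$ are disjoint compact sets. Once that is known, one takes $L$ to be \emph{any} sufficiently small neighborhood of all of $N^-$ in $N$ fitting into a regular pair; $f(L)$ then stays close to $f(N^-)$, while $\overline{N\setminus L}\subset\overline{N\setminus N^-}$, so the disjointness of the limits forces $f(L)\cap\overline{N\setminus L}=\emptyset$ for $L$ small. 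This is a genuinely different, and much simpler, route than trying to engineer $L$ as a boundary collar: it sidesteps controlling $f$ on the collar fibres, and it works precisely because $L$ is allowed to track $N^-$ into $\int(N)$.
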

\begin{proof}
Let $N$ be an isolating block for $X$ which is also a $d$-manifold with bicollared boundary.
Define $N^- = \{x \in N : f(x) \notin \int(N)\}$.
Clearly, $f(N^-) \cap N \subset \partial N$
and that implies, since $N$ is an isolating block, that for every $x \in f(N^-) \cap N$ its
image is $f(x) \notin N$.
Consequently, the compact sets $f(N^-)$ and $\overline{N \setminus N^-}$ are disjoint.

Therefore, if we consider a small neighborhood $L$ of $N^-$ in $N$,
we have that $f(L) \cap (\overline{N \setminus L}) = \emptyset$ and also that
$\overline{N \setminus L}$ is an isolating neighborhood of $X$.
Choosing $L$ to fit in a regular pair $(N, L)$, we obtain the desired regular filtration pair for $X$.
\end{proof}

Additionally, this proof shows that it is possible to find regular filtration pairs as close to $X$ as required.

\section{Discrete Conley index theory}\label{sec:conleyindex}

\subsection{Conley index and Lefschetz-Dold Theorem}\label{subsec:indexcomputation}

Let $f$ be a local homeomorphism of $\mathds{R}^d$ and $X$ an isolated invariant set.
Consider a regular filtration pair $(N, L)$ for $X$.
Denote $\pi_L: N\to N/L$ the projection onto the quotient space $N/L$ that sends every point
 $z\in N\setminus L$ onto itself and every point $z\in L$ onto the point $[L]$.
The definition of filtration pair permits us to define a continuous map $\bar{f} : N/L\to N/L$ that fixes $[L]$
and sends every point $z\in N\setminus L$ onto $\pi_L(f(z))$. This induced map, which
appears to depend strongly in the filtration pair chosen turns out to ultimately depend only
on the invariant set $X$, up to shift equivalence.
This notion was defined in Subsection \ref{subsec:shiftequivalence} and
applies to the category $\textbf{Top}_*$, whose objects are pointed topological spaces
and the morphisms are continuous base-preserving maps.
The importance of shift equivalence is highlighted by the following theorem.

\begin{theorem}
All maps $\bar{f}$ arising from filtration pairs $(N, L)$ of $X$ are shift equivalent.
\end{theorem}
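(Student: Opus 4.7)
The plan is to compare two regular filtration pairs $(N_1,L_1)$ and $(N_2,L_2)$ of $X$ by reducing to a nested situation and then producing explicit shift-equivalence data using a high iterate of $f$. Using that $X$ admits a fundamental system of regular filtration pairs that can be chosen arbitrarily close to $X$, I would first build an auxiliary regular filtration pair $(N_3,L_3)$ that sits inside both of the given ones, in the sense that $\overline{N_3\setminus L_3}\subset \mathrm{int}(\overline{N_i\setminus L_i})$ and, after possibly enlarging $L_3$, also $L_3\subset L_i$ for $i=1,2$. Because shift equivalence is transitive, it suffices to prove that $\bar f_i$ is shift equivalent to $\bar f_3$ in each case; the problem thus reduces to nested filtration pairs $(N',L')\subset(N,L)$.

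In the nested case I would define two base-point preserving continuous maps. The map $b\colon N'/L'\to N/L$ is induced by the inclusion $N'\hookrightarrow N$ followed by the projection $\pi_L$, which is well-defined on the quotient since $L'\subset L$. To define the other direction, compactness of $N$ together with the fact that $X=\Inv(\overline{N\setminus L})$ lies in the interior of $\overline{N'\setminus L'}$ yields an integer $m\ge 1$ such that any $z\in N$ satisfying $f^k(z)\in N\setminus L$ for every $0\le k\le m$ must also have $f^m(z)\in N'\setminus L'$. I then set
\[
a([z])=\begin{cases}[f^m(z)]\in N'/L' & \text{if } f^k(z)\in N\setminus L \text{ for all } 0\le k\le m,\\ [L'] & \text{otherwise.}\end{cases}
\]

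The four shift-equivalence identities follow directly from this construction. The intertwining $a\circ \bar f=\bar f'\circ a$ and $b\circ \bar f'=\bar f\circ b$ are consequences of the fact that the ``bad'' set on which $a$ takes the base-point value is forward invariant under $\bar f$, which in turn uses the filtration condition $f(L)\cap\overline{N\setminus L}=\emptyset$; the compositions $b\circ a$ and $a\circ b$ each collapse to $m$-fold iteration of the respective induced map, so $b\circ a=\bar f^{\,m}$ and $a\circ b=(\bar f')^{m}$. A double application of this argument, once for $(N_1,L_1)\leftrightarrow(N_3,L_3)$ and once for $(N_2,L_2)\leftrightarrow(N_3,L_3)$, concludes the theorem.

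The main obstacle will be verifying continuity of $a$ along the ``exit boundary'' of the region where the dichotomy changes value. Concretely, if a sequence $z_n\in N\setminus L$ tends to a point whose forward iterates just barely fail to stay in $N\setminus L$ up to time $m$, one must show that the images $f^m(z_n)$ accumulate in $L'$, or at least in $L$ so that their class in $N'/L'$ tends to $[L']$. This is where the regularity of the filtration pairs and the openness of the condition $f^k(z)\in N\setminus L$, guaranteed by $f(L)\cap\overline{N\setminus L}=\emptyset$, play the decisive role. Constructing the common refinement in the first step is the only other technical point, and it is handled by standard manipulation of isolating blocks within the fundamental system.
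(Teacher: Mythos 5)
The paper does not prove this theorem itself: it cites Franks and Richeson [FR00] for a complete proof, so your argument must be judged on its own terms, and on those terms its very first step fails.

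The common refinement you ask for does not exist. You require a regular filtration pair $(N_3,L_3)$ with $\overline{N_3\setminus L_3}\subset \int(\overline{N_i\setminus L_i})$ \emph{and} $L_3\subset L_i$. These two conditions force $N_3=\overline{N_3\setminus L_3}\cup L_3\subset N_i$, and they are incompatible whenever $X$ is not an attractor. By definition $L_3$ must be a neighborhood in $N_3$ of the exit set $N_3^-=\{x\in N_3:\ f(x)\notin\int(N_3)\}$. Since $N_3$ is strictly smaller than $N_i$, there are points $x$ on the ``unstable side'' of $\partial(\overline{N_3\setminus L_3})$ whose image leaves $\int(N_3)$ while remaining well inside $\int(\overline{N_i\setminus L_i})$; such $x$ lie in $N_3^-$ and hence must be covered by $L_3$, yet they lie outside $L_i$, contradicting $L_3\subset L_i$. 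A concrete instance is the linear saddle $f(x,y)=(2x,y/2)$ with $X=\{0\}$, $N_1=[-1,1]^2$ and $L_1=\{|x|\ge 0.4\}\cap N_1$: any smaller isolating block $N_3\subset\int(\overline{N_1\setminus L_1})$ has exit points on the unstable axis with $|x|<0.4$, which must be covered by $L_3$ but cannot lie in $L_1$.

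A second, independent problem is that the dichotomy you use to define $a$ is not well posed. You claim there is $m$ such that $f^k(z)\in N\setminus L$ for $0\le k\le m$ forces $f^m(z)\in N'\setminus L'$. But the image under $f^m$ of the set $\bigcap_{k=0}^m f^{-k}(N\setminus L)$ is $\bigcap_{k=0}^m f^{k}(N\setminus L)$, which as $m\to\infty$ decreases to the \emph{unstable} set $\Lambda^-(f,\overline{N\setminus L})$, not to $X$. The unstable set typically reaches the relative boundary $\partial_N L$ of $\overline{N\setminus L}$ and is not contained in the smaller $\overline{N'\setminus L'}$, so for every $m$ there are $z$ satisfying your condition for which $[f^m(z)]$ is meaningless in $N'/L'$. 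The phenomenon you actually want, and the one used in the paper's proof that $\bar f$ and $f^-$ are shift equivalent in Subsection 3.3, is the dual one: points of $L$ leave $N$ in a uniformly bounded number of iterates (because $f(L)\cap\overline{N\setminus L}=\emptyset$ traps a forward orbit that stays in $N$ inside $L$, contradicting $\Inv(\overline{N\setminus L})=X\subset\int(\overline{N\setminus L})$ if the orbit never exits). Repairing the argument therefore requires a different reduction scheme, for instance comparing pairs with the same $N$ and $L\subset L'$ first and only then varying the block, as in Franks--Richeson and as sketched implicitly in Section 5 of the paper.
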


The \emph{discrete Conley index of $X$} is defined as the shift equivalence class of the map $\bar{f}$.
For a complete proof of this theorem we refer the reader to \cite{franksricheson}.

We could have also considered the induced maps $\bar{f}_{*,r} : \widetilde H_r(N/L) \rightarrow \widetilde H_r(N/L)$
in the reduced homology groups with rational coefficients.
Then, all possible endomorphisms $\bar{f}_{*,r}$ are shift equivalent, in other words,
their Leray reductions $\mathcal{L}(\bar{f}_{*,r})$ are conjugate.
The shift equivalence class of $\bar{f}_{*,r}$
is called \emph{$r$-homological discrete Conley index of $X$} and denoted $\textit{h}_r(f, X)$.

Lefschetz-Dold Theorem can be applied to the map $\bar{f} : N/L \to N/L$.
  From the definition of filtration pair we get that $\bar{f}$ is locally constant
at $[L]$, hence $i(\bar{f},[L]) = 1$, and $f$ and $\bar{f}$ are locally conjugate
 around $X = \Inv(\overline{N \setminus L})$. Lefschetz-Dold Theorem yields
\begin{equation*}
\Lambda(\bar{f}) = i(\bar{f}, N/L) = 1 + i(f, X),
\end{equation*}
where $\Lambda(\bar{f})$ denotes the \emph{Lefschetz number} of $\bar{f}$,
defined as the alternate sum of the traces of the maps induced by $\bar{f}$
in the singular homology groups $H_r(N/L)$, $r \ge 0$.
A similar equation can be deduced for any iterate of $f$. For $n \ge 1$,
\begin{equation}\label{eq:lefschetzthmiterates}
\Lambda((\bar{f})^n) = 1 + i(f^n, X).
\end{equation}

This last equation requires some clarification. Despite the pair $(N, L)$ is not a filtration pair, in general,
for $X$ and the map $f^n$, we can define a map $\overline{f^n} : N/L \to N/L$ by fixing the basepoint $[L]$
and sending $x \in N \setminus L$ to $[L]$ if any of its first $n$ forward images lies in $L$ and to $f^n(x)$ otherwise.
It is not difficult to see that $\overline{f^n} = (\bar{f})^n$, hence equation (\ref{eq:lefschetzthmiterates})
results from applying Lefschetz-Dold Theorem to the map $\overline{f^n}$.

 We will estimate the fixed point indices $i(f^n, X)$ by examining the Lefschetz numbers, $\Lambda((\bar{f})^n)$.
More concretely, we would like to compute the traces of the finite-dimensional linear maps
$$(\bar{f}_{*,r})^n: \widetilde H_r(N/L)\to \widetilde H_r(N/L),\enskip 0\leq r\leq d.$$
Expanding the definition of $\Lambda((\bar{f})^n)$ we obtain
\begin{equation}\label{eq:1}
\Lambda((\bar{f})^n) = 1+\sum_{r = 0}^d (-1)^r \trace((\bar{f}_{*,r})^n)
= 1+\sum_{r = 0}^d(-1)^r \trace(\textit{h}_r(f^n, X)),
\end{equation}
where the extra 1 makes up for the small gap between the reduced and singular homology groups at grade 0.
Note that all higher homology groups of $N/L$ are trivial because $N$ and $L$ are $d$-manifolds with boundary.
The notion of trace of $\textit{h}_r(f, X)$ is well-defined because
traces are invariant under shift equivalence.
Substituting equation (\ref{eq:1}) into (\ref{eq:lefschetzthmiterates}), we obtain equation (\ref{eq:lefschetzlikeiterates}),
$$i(f^n, X) = \sum_{r = 0}^d(-1)^r \trace(\textit{h}_r(f^n, X)).$$

\subsection{Attractors and repellers}\label{subsec:attractorrepeller}

The computation of the fixed point index $i(f^n, X)$ and the traces of the homological
 Conley indices are particularly easy when $X$ is an acyclic continuum and either
an attractor or a repeller. In such situations, the work done in Proposition 3 of \cite{periodicindices}
guarantees the existence of suitable filtration pairs to work with. If $X$ is an attractor then
we can assume that $N$ is connected and $L$ is empty, hence
$$\trace(\textit{h}_0(f^n, X)) = 1,$$
and a nilpotence argument, which is described in \cite{richesonwiseman} and
is a particular case of the one we present in Subsection \ref{subsec:nilpotence}, shows
that the maps $\bar{f}_{*,r}$ are nilpotent, hence the traces of $\textit{h}_r(f^n, X)$ are 0 for $r \ge 1$.
Substituting in equation (\ref{eq:lefschetzlikeiterates}) we get that $i(f^n, X) = 1$, as expected.

In the case $X$ is a repeller, $N$ is connected, $L$ is not empty and we can assume that there is a unique
connected component of $\mathds{R}^d \setminus L$ contained in $N$.
 Therefore, $H_d(N/L) \sim \mathds{Q}$ and
$$\trace(\textit{h}_d(f^n, X)) = d(f)^n.$$
By connectedness of $N/L$, we obtain that $\trace(\textit{h}_0(f^n, X)) = 0$.
This result agrees with Szymczak's duality, which will be described in Subsection \ref{subsec:duality}.
Applying this duality we obtain
$\trace(\textit{h}_r(f^n, X)) = \trace(\textit{h}_{d-r}(f^{-n}, X))$ and, since a repeller for $f$ is an attractor for $f^{-1}$,
the previous computations show that $\trace(\textit{h}_r(f^n, X)) = 0$ for any $0 < r < d$.
Finally, we obtain that $i(f^n, X) = (-1)^{d} d(f)^n$, which agrees with the well-known formula for
repelling fixed points.

These computations prove Theorem A for attractors and repellers.
Simply set, for repellers in $d > 1$ and attractors, $\varphi$ to be the identity
map in a set consisting of one or two points, respectively. In the remaining case, a repeller in dimension $d = 1$,
define $\varphi$ as the identity in a two-element set if $f$ preserves orientation and as the permutation that swaps the elements
if $f$ reverses orientation.
The hypothesis of Theorem B are never satisfied for these particular invariant sets.
Theorem C also follows easily, as we have shown that the sequence $\{i(f^n,p)\}_{n \ge 1}$ is equal to
$\sigma^1$ for attractors and to $\sigma^1 - \sigma^2$ for repellers when $f$ is a local orientation-reversing
homeomorphism of $\mathds{R}^3$.

In the case $X$ is neither an attractor nor a repeller, we can assume also that
$N$ is connected, $L$ is not empty and $H_d(N/L)$ is trivial.
This implies that $\trace(\textit{h}_0(f^n, X))  = 0$
 and $\trace(\textit{h}_d(f^n, X)) = 0$. However, no further
assumptions on the filtration pair $(N, L)$ can be made. Equation (\ref{eq:lefschetzlikeiterates})
simplifies into
\begin{equation}\label{eq:lefschetzsimple}
i(f^n, X) = \sum_{r = 1}^{d-1} \trace(\textit{h}_r(f^n, X)).
\end{equation}
In particular, for local homeomorphisms of $\mathds{R}^3$ we will need just to examine
the maps $\bar{f}_{*,1}$ and $\bar{f}_{*,2}$ in order to compute $i(f^n, X)$.

\subsection{Another approach to the discrete Conley index and its duality}\label{subsec:duality}

In this subsection, we present another possible approach to discrete Conley index which makes a closer geometrical
connection with the continuous-time case. Charles Conley originally introduced the index as
a topological invariant valid for flows, see \cite{conley}.
One can define the notions of isolated compact invariant set $X$ and isolating block $N$ of a flow in a similar way it was defined in the discrete case.
The exit set $l^-$ of a given isolating block $N$ for $X$ is a closed subset of the boundary of $N$.
The (continuous) Conley index of $X$ is the homotopy class of the pointed
space $(N/l^-, [l^-])$, which does not depend on the choice of $N$ and $l^-$ but only
in the local dynamics of the flow around $X$. Usually, the Conley index is defined using
index pairs, which are more general than the pairs $(N, l^-)$ we introduced as, for instance, they do not
require the second set to be contained in the boundary of the first one.

The discrete Conley index has been introduced in this article using filtration pairs $(N, L)$, a notion
slightly less general than that of index pairs, for the discrete case, which is widespread in the literature.
However, in any case, the exit set of the isolating neighborhood $N$ is contained in $L$
and, consequently, $L$ is not contained in $\partial N$ except for highly degenerate cases.
We present below a way of computing discrete Conley index using pairs $(N, l)$
satisfying $l \subset \partial N$. Additionally, the use of isolating blocks will allow us to
compute the indices of a map and its inverse at once.

Let $X$ be an isolated invariant set of a local homeomorphism $f$. Take an isolating block $N$ for $X$ which is a manifold with bicollared boundary. The two sets
$$U^-=\{x\in N\, \vert \, f(x)\not\in N\}, \;\enskip U^+=\{x\in N\, \vert \, f^{-1}(x)\not\in N\}$$
are open sets of $N$ whose union covers $\partial N$.
Thus, there exists a regular decomposition $(l^-, l^+)$ of $\partial N$ such that $l^-\subset U^-$ and $l^+\subset U^+$.

As we did with filtration pairs, it is possible to define a map $f^- : N/l^- \to N/l^-$
which fixes $[l^-]$ and sends any $x \in N \setminus l^-$ to either $f(x)$ if $f(x) \in N \setminus l^-$
or the basepoint $[l^-]$ in other case. Similarly, we can define a map $f^+ : N/l^+ \to N/l^+$ using
$f^{-1}$ instead of $f$.

\begin{lemma}\label{lem:f-f+continuous}
The maps $f^-$ and $f^+$ are continuous.
\end{lemma}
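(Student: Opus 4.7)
The plan is to prove continuity of $f^-$ via the quotient topology: since $\pi_{l^-}:N\to N/l^-$ is a quotient map, it suffices to prove that the composition $g=f^-\circ\pi_{l^-}:N\to N/l^-$ is continuous, where $g(x)=\pi_{l^-}(f(x))$ whenever $f(x)\in N$, $g(x)=[l^-]$ whenever $f(x)\notin N$, and $g$ is constantly $[l^-]$ on $l^-$ (which is consistent with $l^-\subset U^-$, so $f(l^-)\cap N=\emptyset$). Decompose $N=A\sqcup U^-$, where $A=N\cap f^{-1}(N)$ is closed in $N$ and $U^-$ is open. On $A$, $g=\pi_{l^-}\circ f|_A$ is continuous; on $U^-$, $g$ is constant. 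The only thing to check is joint continuity at points on the common frontier, i.e.\ at $x\in A$ with $f(x)\in \partial N$.

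The key geometric input, and the crux of the argument, is the claim: for every $x\in N$, if $f(x)\in\partial N$ then $f(x)\in l^-$. Suppose $f(x)\in l^+$; since $l^+\subset U^+$, the point $y=f(x)$ satisfies $f^{-1}(y)\notin N$. But $f^{-1}(y)=x\in N$, a contradiction. Because $(l^-,l^+)$ decomposes $\partial N$, the point $f(x)$ must therefore lie in $l^-$, and so $g(x)=[l^-]$.

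With this in hand, continuity at any $x\in A$ with $f(x)\in \partial N$ follows immediately from a sequential argument: for $x_n\to x$, decompose $(x_n)$ into its $A$-part and its $U^-$-part. On the $U^-$-part, $g(x_n)=[l^-]=g(x)$. On the $A$-part, $f(x_n)\to f(x)\in l^-$, so by continuity of $\pi_{l^-}$ one has $g(x_n)=\pi_{l^-}(f(x_n))\to\pi_{l^-}(f(x))=[l^-]=g(x)$. At points $x\in A$ with $f(x)\in\mathrm{int}(N)$, all sufficiently close $x_n$ lie in $A$ and $g=\pi_{l^-}\circ f$ is continuous there. At $x\in \mathrm{int}_N(U^-)$, $g$ is locally constant. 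This establishes continuity of $g$, hence of $f^-$.

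The same argument, with $f$ replaced by $f^{-1}$, $l^-$ replaced by $l^+$, and the roles of $U^\pm$ swapped, yields the continuity of $f^+$; here one uses the symmetric condition $l^-\subset U^-$ to conclude that if $x\in N$ and $f^{-1}(x)\in\partial N$ then $f^{-1}(x)\in l^+$. The main (and really only) obstacle is the verification of this boundary claim, which depends essentially on the defining inclusions $l^-\subset U^-$ and $l^+\subset U^+$ afforded by the regular decomposition of $\partial N$; everything else is a routine patching of continuous maps on a closed–open cover.
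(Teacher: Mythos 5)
Your proof is correct and hinges on the same key observation as the paper: if $x\in N$ and $f(x)\in\partial N$, then $f(x)\notin U^+$ (because $f^{-1}(f(x))=x\in N$), hence $f(x)\in l^-$. The remaining reduction to continuity is carried out in the paper by observing that $(f^-)^{-1}([l^-])$ is a compact neighborhood of $[l^-]$, while you patch the two continuous pieces over the closed cover $\{A,\overline{U^-}\}$ and check agreement on the frontier; these are routine reformulations of the same argument.
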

\begin{proof}
We only sketch a proof for $f^-$, the other case being completely analogous.
Observe that if $x \in N$ and $f(x) \in \partial N$ then we must have that $f(x) \in l^-$, because
$f(x)\not\in U^+$. This yields that $(f^-)^{-1}([l^-])$ is a
compact neighborhood of $[l^-]$ and continuity follows easily.
\end{proof}

A triple $(N, l^-, l^+)$ as defined will be called \emph{filtration triple}. Our task is now
to show that the discrete Conley index can be extracted from this framework, and that the symmetry between $f$ and $f^{-1}$ permit us to give a proof of a duality theorem originally due to Szymczak (see \cite{szymczak}). Consider a compact set $L$ such that
$(N, L)$ is a filtration pair, and let $\bar{f}$ be the induced map in $N/L$.

\begin{proposition}
The maps $\bar{f}$ and $f^-$ are shift equivalent.
\end{proposition}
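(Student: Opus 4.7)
The plan is to construct two base-preserving continuous maps $a : N/l^- \to N/L$ and $b : N/L \to N/l^-$ and an integer $m \ge 0$ realising a shift equivalence between $f^-$ and $\bar f$. By the theorem that any two filtration pairs yield shift equivalent induced maps, I am free to replace $L$ with a convenient choice. I will pick $L$ to be a sufficiently thin compact neighborhood of $l^-$ in $N$, chosen so that $l^- \subset L$ and $(N, L)$ is still a filtration pair; the openness of the condition $f(L) \cap \overline{N \setminus L} = \emptyset$ and the fact that $l^-$ already captures the exit behavior of $N$ allow such an $L$ to be produced by a routine thickening argument.

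With this setup, $a$ is defined as the natural projection that collapses $L/l^-$ to the basepoint, and $b$ is defined by $b([L]) = [l^-]$ and $b([x]) = (f^-)^m(x)$ for $x \in N \setminus L$ (viewed inside $N/l^-$ via the inclusion $N \setminus L \subset N \setminus l^-$). Both maps are continuous by Lemma \ref{lem:f-f+continuous} together with the observation that any $x \in \partial L \setminus l^-$ lies in $N \setminus l^-$ so $b$ is continuous at the basepoint. The intertwining relations $a \circ f^- = \bar f \circ a$ and $b \circ \bar f = f^- \circ b$ then reduce to a finite case analysis using $l^- \subset L$ and $f(L) \cap \overline{N \setminus L} = \emptyset$: the latter condition forces orbits entering $L$ to stay in $L$ until they leave $N$, so the two ways of writing the dynamics agree.

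The compositions $a \circ b = \bar f^{\,m}$ and $b \circ a = (f^-)^m$ then reduce, again by case analysis, to the single statement that $m$ can be chosen uniformly large enough so that every point of $L$ has a forward orbit escaping $N$ within $m$ steps. This is the main obstacle and the place where the dynamics really enters. It is settled as follows: because $L$ is a neighborhood of the exit set and $\overline{N \setminus L}$ is an isolating neighborhood of $X$, one has $L \cap X = \emptyset$; on the other hand, any forward orbit trapped in $L$ forever would have its $\omega$-limit set contained in the closed set $L$ and inside the maximal invariant set $\Inv(N) = X$, contradicting $L \cap X = \emptyset$. Hence the escape time $t : L \to \mathds{N}$ is everywhere finite, and the sets $\{t \le k\} = L \setminus \bigcap_{j \le k} f^{-j}(N)$ form an increasing open cover of the compact set $L$, giving a uniform bound $m$.

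Once $m$ is fixed with this property, the verification of the two composition identities is direct: for $x \in L \setminus l^-$ both $b(a([x]))$ and $(f^-)^m([x])$ equal $[l^-]$; for $x \in N \setminus L$ the orbit either stays in $N \setminus L$ up to step $m$ (all four outputs are $f^m(x)$, identified appropriately) or it hits $L$, after which the filtration pair condition forces it to remain in $L$ so every output lands on the respective basepoint. This gives the shift equivalence of $f^-$ with a specific $\bar f$; the general case follows from the theorem that all filtration pairs yield shift equivalent induced maps.
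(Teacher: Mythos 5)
Your proof is, in its core, the same as the paper's: define $a:N/l^-\to N/L$ as the further collapse (using $l^-\subset L$), define $b:N/L\to N/l^-$ by pushing $m$ times under $f^-$ where $m$ is a uniform bound on the escape time from $L$, and check the four shift-equivalence identities by case analysis. The justification you give for the escape-time bound (any forward orbit trapped in $L$ would force $\omega$-limit points in $L\cap X$, contradicting $X\subset\int(\overline{N\setminus L})$) is a welcome addition --- the paper states the bound without argument.

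However, the opening replacement step is a genuine error. You claim that $L$ may be taken to be ``a sufficiently thin compact neighborhood of $l^-$ in $N$'' while $(N,L)$ remains a filtration pair. This is not possible in general. The second filtration-pair axiom requires $L$ to be a neighborhood of the full exit set $N^-=\{x\in N:\,f(x)\notin\int(N)\}$, and $N^-$ typically reaches deep into $\int(N)$, far from $\partial N\supset l^-$; for example, for a linear hyperbolic saddle at the center of a round disk $N$, the set $N^-$ is the part of $N$ lying outside a compressed ellipse and contains points at distance bounded away from $\partial N$. So there is no thin collar of $l^-$ that contains $N^-$, and hence no such filtration pair. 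Your confusion seems to stem from conflating $l^-$ (which captures the exit behavior at $\partial N$) with $N^-$ (the exit set of the whole block). As written, your proof therefore argues from a hypothesis that cannot be arranged, which makes the argument vacuous.

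The good news is that the replacement is also completely unnecessary, which is why the rest of your proof reads so well. The only properties of $L$ that your later case analysis uses are: (i) $l^-\subset L$, which is automatic for every filtration pair $(N,L)$ built on the same isolating block $N$, since $l^-\subset U^-\subset N^-\subset \int_N(L)$; (ii) the forward invariance $f(L)\cap\overline{N\setminus L}=\emptyset$, which is part of the definition; and (iii) the escape-time bound, which you prove from (i) and (ii) alone. If you simply delete the first two sentences about replacing $L$ and carry out the same construction of $a$, $b$, and $m$ for the given filtration pair, the argument is correct and coincides with the paper's.
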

\begin{proof}
There exists an integer $n \ge 1$ so that, for every $x \in L$, one of the first $n$ forward images of $x$ by $f$
does not lie in $N$.
The complementary $U$ of $\bigcap_{k = 0}^n f^{-k}(N)$ in $N$  is composed of the points $x \in N$ such that
$f^k(x) \notin N$ for some $1 \le k \le n$ and so contains $L$. Denote
$\pi_{l^-}$ and $\pi_L$ the projections of $N$ onto $N/l^-$ and $N/L$ respectively.
The map, from $N$ to $N/l^-$, that sends every point of $U$ to $[l^-]$ and every point of $x\in\bigcap_{k = 0}^n f^{-k}(N)$ to $\pi_{l^-}(f^n(x))$ induces a  map $b: N/L\to N/l^-$ because $L\subset U$. The map $b$ is constant on the open set $U$ of $N$ and continuous when restricted to $\bigcap_{k = 0}^n f^{-k}(N)$. Like in Lemma \ref{lem:f-f+continuous}, to check the continuity of $b$ on $N/L$ we just need to observe
that $f^n(x) \in l^-$ for any $x \in \partial_N U$, hence $b(x) = [l^-]$ as desired.
To prove that $b \circ \bar{f}=f^- \circ b$, one must prove that
$$b(\bar f(\pi_L(x)))= f^-(b(\pi_L(x)))$$
for every $x\in N$. Observe that this equation is satisfied in the cases
$$x\in L, \enskip x\in U\setminus L, \enskip x\in \bigcap_{k = 0}^{n+1} f^{-k}(N), \enskip x\in \bigcap_{k = 0}^n f^{-k}(N)\setminus \bigcap_{k = 0}^{n+1} f^{-k}(N).$$
The projection $\pi_L$ sends every point of $l^-$ to $[L]$ because $l^- \subset L$ and so induces a continuous map $a : N/l^- \rightarrow N/L$ which sends
$[l^-]$ to $[L]$. To prove that $a \circ f^-=\bar{f}\circ a$, one must prove that
$$a(f^-(\pi_{l^-}(x)))= \bar f(a(\pi_{l_-}(x)))$$ for every $x\in N$. Here again, using the fact that $U^-\subset L$ and $f(L) \cap N \subset L$, one can observe that the previous equality is satisfied in the following cases
$$x\in l^-, \enskip x\in U^-\setminus l^-,\enskip x\in L\setminus U^-,\enskip x\in N\setminus L.$$
It remains to prove that $b \circ a = (f^-)^n$ and $a \circ b = (\bar{f})^n$, which means
$$b(a(\pi_{l^-}(x)))=(f^-)^n(\pi_{l^-}(x)), \;\enskip a(b(\pi_L(x)))=(\bar{f})^n(\pi_L(x))$$ for every $x\in N$. It is satisfied in the following cases
$$x\in U,\enskip x\in\partial_N U, \enskip x\in N\setminus \overline U.$$
\end{proof}

There is a natural isomorphism $\widetilde H_r(N/l^-)\sim H_r(N,l^-)$ and so the map $f^-$ induces an endomorphism $f^-_{*,r} : H_r(N, l^-) \to H_r(N, l^-)$.
The previous proposition shows that $f^-_{*,r}$ is a representative of the shift equivalence class $\textit{h}_r(f, X)$.
Let us explain more precisely the meaning of $f^-_{*,r}$.
Consider a one-to-one continuous map
$$H:[0,1)\times \partial N\to \mathds{R}^d\setminus \int(N),$$
 such that $H(0,z)=z$ for every $z\in \partial N$ and such that $N\cup H([0,1)\times \partial N)$
is a neighborhood of $N$ and denote $\rho: N\cup H([0,1)\times \partial N)\to N$ the retraction that fixes every point of $N$ and sends every point $H(t,z)$ to $z$.

Write
$$ W_{\varepsilon}^-=H([0,\varepsilon)\times l^-), \enskip W_{\varepsilon}^+=H([0,\varepsilon)\times (\partial N\setminus l^-)).$$
\noindent
Observe that if $\varepsilon$ is small enough, then $f(N)\cap W_{\varepsilon}^+=\emptyset$, and fix such an $\varepsilon$.
The morphism $f^-_{*,r}$ can be written as the composition of the morphism
$$f_*: H_r(N, l^-) \to H_r(\mathds{R}^d\setminus \int(W_{\varepsilon}^+),\mathds{R}^d\setminus (W_{\varepsilon}^+\cup \int(N)))$$ induced by $f$, the excision morphism
$$e: H_r(\mathds{R}^d\setminus \int(W_{\varepsilon}^+),\mathds{R}^d\setminus (W_{\varepsilon}^+\cup \int(N)))\to H_r(N\cup W_{\varepsilon}^-, W_{\varepsilon}^-)$$ and the map
$$\rho_*:   H_r(N\cup W_{\varepsilon}^-, W_{\varepsilon}^-) \to H_r(N, l^-) $$ induced by $\rho$.
 In others words, let $\sigma$ be a relative $r$-cycle of $(N,l^-)$. The decomposition principle tells us that the chain $f(\sigma)$
is homologous as a chain of $\mathds{R}^d\setminus W_{\varepsilon}^+$ to $\sigma_1+\sigma_2$ where $\sigma_1$ is a chain in $N\cup W_{\varepsilon}^-$ and $\sigma_2$ a chain in $\mathds{R}^d\setminus (N\cup  W_{\varepsilon}^+)$. If $\sigma$ represents $\kappa\in H_r(N,l^-)$, then $\rho(\sigma_1)$ represents  $f^-_{*,r}(\kappa)$.
 An analogous result shows that the map $f^+_{*,r}:H_r(N, l^+) \to H_r(N, l^+)$ induced by $f^+$ also represents
the $r$-homological discrete Conley index $\textit{h}_r(f^{-1}, X)$.

\medskip
Let us explain now the duality.
There is a non degenerated bilinear form
$$(\kappa^-, \kappa^+)\mapsto \kappa^-\cdot\kappa^+$$
on $H_{r}(N,l^-)\times H_{d-r}(N,l^+)$ that induces an isomorphism
$$ H_{d-r}(N,l^+)\to H^{r}(N,l^- )$$ defined as follows:

\begin{itemize}
\item every class $\kappa^-\in H_{r}(N,l^-)$ may be represented by a $r$-chain $\sigma^-$ of $N$ such that
$\partial \sigma^-\subset l^-\setminus \partial l^-$ and every class $\kappa^+\in H_{d-r}(N,l^+)$ by a $(d-r)$-chain $\sigma^+$ of $N$ such that $\partial \sigma^+\subset l^+\setminus \partial l^+$;
\item the algebraic intersection number $\sigma^-\cdot\sigma^+$ is well defined because $\partial\sigma^-\cap \sigma^+=\partial\sigma^+\cap \sigma^-=\emptyset$;
\item $\sigma^-\cdot\sigma^+$ depends only on $\kappa^-$ and $\kappa^+$ and therefore can be written $\kappa^-\cdot\kappa^+$.
\end{itemize}
Observe also the following:
\begin{itemize}
\item $\partial f(\sigma^-)\cap \sigma^+=\partial\sigma^+\cap f(\sigma^-)=\emptyset$ and so one can define $f(\sigma^-)\cdot\sigma^+$;
\item $\sigma^-\cdot f^{-1}(\sigma)^+$ can also be defined and one has $$f(\sigma^-)\cdot\sigma^+=d(f) \,\sigma^{-1}\cdot f^{-1}(\sigma^+).$$
\end{itemize}

\noindent
In order to get the duality result, it remains to prove that
$$f(\sigma^-)\cdot\sigma^+=f^-_{*,r}(\kappa^-)\cdot\kappa^+$$
 and
$$\sigma^-\cdot f^{-1}(\sigma^+)= \kappa^-\cdot f^{+}_{*,d-r}(\kappa^+).$$
One knows that $f(\sigma^-)$ is homologous as a chain in $\mathds{R}^d\setminus W_{\varepsilon}^+$ to $\sigma^-_1+\sigma^-_2$, where $\sigma_1^-$ is a chain in $N\cup W_{\varepsilon}^-$ and $\sigma_2^-$ a chain in $\mathds{R}^d\setminus (N\cup  W_{\varepsilon}^+)$ , and we can deduce that
$$f(\sigma^-)\cdot\sigma^+=\sigma^-_1\cdot\sigma^+=\rho(\sigma^-_1)\cdot\sigma^+= f^-_{*,r}(\kappa^-)\cdot\kappa^+,$$
because $\rho^-(\sigma^-_1)$ represents $f^-_{*,r}(\kappa^-)$. The second equality can be proven similarly.

\medskip
This duality result was originally proved by Szymczak in \cite{szymczak}.
It can be stated as follows:

\begin{theorem}[Szymczak]\label{thm:szymczak}
Let $f$ be a local homeomorphism of a manifold of dimension $d$ and $X$ a compact isolated
invariant set. Then, for any $0 \le r \le d$
$$\textit{h}_{d-r}(f, X) \cong d(f) \cdot (\textit{h}_r(f^{-1}, X))^*.$$
\end{theorem}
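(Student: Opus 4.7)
The plan is to exploit the framework of filtration triples developed above. Fix an isolating block $N$ for $X$ that is a $d$-manifold with bicollared boundary, together with a regular decomposition $(l^-, l^+)$ of $\partial N$ forming a filtration triple. By the results of the preceding subsection, $f^-_{*,d-r} : H_{d-r}(N, l^-) \to H_{d-r}(N, l^-)$ represents $\textit{h}_{d-r}(f, X)$, while $f^+_{*,r} : H_r(N, l^+) \to H_r(N, l^+)$ represents $\textit{h}_r(f^{-1}, X)$. It therefore suffices to exhibit, up to the scalar $d(f)$, a linear conjugation between $f^-_{*,d-r}$ and the transpose of $f^+_{*,r}$.

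Such a conjugation will come from the intersection pairing $(\kappa^-, \kappa^+) \mapsto \kappa^- \cdot \kappa^+$ on $H_{d-r}(N, l^-) \times H_r(N, l^+)$ introduced in the previous paragraphs, which is an instance of Poincar\'e--Lefschetz duality for the compact manifold $N$ with boundary partitioned as $l^- \cup l^+$. Non-degeneracy of this pairing yields an isomorphism $H_{d-r}(N, l^-) \to H_r(N, l^+)^*$. The core computation is the adjunction identity
$$f^-_{*,d-r}(\kappa^-) \cdot \kappa^+ = d(f)\, \kappa^- \cdot f^+_{*,r}(\kappa^+),$$
which is essentially contained in the discussion above. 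Indeed, representing $\kappa^-$ by a relative cycle $\sigma^-$ and expanding $f(\sigma^-) = \sigma_1^- + \sigma_2^-$ modulo the complement of the isolating block identifies $f(\sigma^-) \cdot \sigma^+$ with $f^-_{*,d-r}(\kappa^-) \cdot \kappa^+$; analogously one identifies $\sigma^- \cdot f^{-1}(\sigma^+)$ with $\kappa^- \cdot f^+_{*,r}(\kappa^+)$; finally one uses the basic fact that $f$ multiplies algebraic intersection numbers by its orientation sign $d(f)$, i.e.\ $f(\sigma^-) \cdot \sigma^+ = d(f)\, \sigma^- \cdot f^{-1}(\sigma^+)$. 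Chaining these equalities gives the displayed adjunction. Under the duality isomorphism, this is exactly the statement that the two shift equivalence classes are related by transposition and multiplication by $d(f)$.

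The principal technical hurdle is justifying the non-degeneracy of the intersection pairing for this particular pair of relative classes, since $N$ is a compact manifold whose boundary is written as the union of two codimension-zero submanifolds meeting along a common bicollared codimension-one submanifold. This is the usual Poincar\'e--Lefschetz duality for manifolds with corners; the regularity hypothesis on the decomposition $(l^-, l^+)$ and the bicollared boundary of $N$ allow one either to smooth corners or to invoke the standard excision-plus-cap-product argument, producing the required duality isomorphism in every degree. Everything else in the argument is a direct assembly of the identifications and identities already established.
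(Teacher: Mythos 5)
Your proposal reproduces the paper's own argument essentially step for step: it uses filtration triples to identify $f^{-}_{*,d-r}$ and $f^{+}_{*,r}$ as representatives of the two Conley indices, the Poincar\'e--Lefschetz intersection pairing on $H_{d-r}(N,l^-)\times H_r(N,l^+)$, and the adjunction identity $f^{-}_{*,d-r}(\kappa^-)\cdot\kappa^+ = d(f)\,\kappa^-\cdot f^{+}_{*,r}(\kappa^+)$ proved via the decomposition of $f(\sigma^-)$ relative to the collar of $\partial N$. This is exactly the route taken in Subsection \ref{subsec:duality}, just with the roles of $r$ and $d-r$ swapped.
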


\subsection{On the connectedness of $\overline{N \setminus L}$}\label{subsec:connected}

The notion of filtration pair provides a useful tool to study isolated invariant sets. Despite we know
already that these pairs can always be chosen to be regular, it may
happen that the isolating neighborhood $\overline{N \setminus L}$ fails to be connected even though
$X$ is connected. The purpose of this section is to solve this issue by showing that, if $X$ is connected,
we can stick our considerations to the connected component of $\overline{N \setminus L}$ without losing any dynamical
information represented by the spectrum of the discrete homological Conley index of $X$ and $f$.
The $2$-dimensional version of what is done here can be found in \cite{lecalvezyoccozconley}.

Assume that $(N, L)$ is a regular filtration pair for $X$ and $f$ and
denote ${\cal S}=\pi_0(\overline{N\setminus L})$ the set of connected components of $\overline{N\setminus L}$.
Recall that, for every $r \ge 0$,
$\widetilde H_r(N/L) \sim H_r(N, L)$ and $\widetilde H_r(N/(\overline{N \setminus S})) \sim H_r(N, \overline{N \setminus S})$
because the pairs $(N, L)$ and $(N, \overline{N \setminus S})$ are regular for any $S \in \cal S$.
The quotient $N/L$ is the wedge sum of the pointed spaces $N/(\overline{N \setminus S})$
for $S \in \mathcal{S}$ and we may write
\begin{equation*}
H_r(N,L)=\bigoplus_{S\in{\cal S}} H_r(N, \overline{N\setminus S}).
\end{equation*}
This decomposition provides a way to split the action of the map $\bar{f}_{*,r}$ in
the group $H_r(N, L)$. Given $S_0, S_1 \in \mathcal{S}$, define
$$\bar f^{S_1,S_0}_{*,r}=\pi_r^{S_1}\circ \bar f_{*,r}{}_{|H_r(N, \overline{N\setminus S_0})},$$
where $\pi_r^{S_1}:H_r(N,L)\to H_r(N, \overline{N\setminus S_1})$ is the projection parallel to
$\bigoplus_{S\not=S_1} H_r(N, \overline{N\setminus S})$. Therefore, it is possible to write

$$\bar f_{*,r}{}_{\vert H_r(N, \overline{N\setminus S_0})} = \sum_{S \in \mathcal{S}} \bar{f}^{S,S_0}_{*,r}$$
for any connected component $S_0$ of $\overline{N \setminus L}$. As a consequence, we get
\begin{equation*}
\trace(\textit{h}_r(f, X)) = \trace(\bar{f}_{*,r}) =
\sum_{S \in \mathcal{S}} \trace(\bar{f}^{S,S}_{*,r}).
\end{equation*}

Let us give another interpretation of the maps $\bar f^{S_1,S_0}_{*,r}$.
Write $R_i=\overline{N\setminus S_i}$, for $i=0,1$, and denote $\pi_{R_i}: N\to N/R_i$ the projection onto the quotient space $N/R_i$ that sends every point
 $z\in N\setminus R_i$ to itself and every point $z\in R_i$ to the point $[R_i]$.
One gets a continuous map $\bar f^{S_1,S_0} : N/R_0\to N/R_1$
that sends $[R_0]$ to $[R_1]$ and every point $z\in N\setminus R_0$ to $\pi_{R_1}(f(z))$.
The action of $\bar f^{S_1,S_0}$ on the reduced homology groups $\widetilde H_r(N/R_i)\sim H_r(N,R_i)$
is nothing but $\bar f^{S_1,S_0}_{*,r}$.

\medskip
We will often use the following fact:
\begin{lemma}\label{lem:nestedcyclesgeneral}
Let $S_0$ and $S_1$ be connected components of $\overline{N \setminus L}$ and
$\kappa \in H_r(N, \overline{N \setminus S_0})$. If $\kappa$ is represented by a relative $r$-cycle $\sigma$
 of $(N, \overline{N \setminus S_0})$, then the class $\bar f^{S_1,S_0}_{*,r}(\kappa)$
is represented by a relative $r$-cycle $\sigma'$ of $(N, \overline{N \setminus S_1})$ such that
$\sigma' \subset f(\sigma \cap S_0)$.
\end{lemma}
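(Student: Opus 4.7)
The plan is to compute $\bar f^{S_1,S_0}_{*,r}(\kappa)$ at the chain level, using the explicit formula $\bar f^{S_1,S_0}([R_0])=[R_1]$ and $\bar f^{S_1,S_0}(z)=\pi_{R_1}(f(z))$ for $z\in N\setminus R_0=\int_N(S_0)$.

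First, I would subdivide $\sigma$ by iterated barycentric subdivision with respect to a suitable open cover refining the closed cover $\{R_0,S_0\}$ of $N$, and decompose $\sigma=\sigma_1+\sigma_2$, with $\sigma_1\in C_r(R_0)$ and every simplex of $\sigma_2$ contained in $\int_N(S_0)$. Then $[\sigma_2]=\kappa$ in $H_r(N,R_0)$. A useful observation, which simplifies the rest, is that $\sigma_2$ is actually an \emph{absolute} cycle: its boundary $\partial\sigma_2=\partial\sigma-\partial\sigma_1$ lies in $C_{r-1}(R_0)$, but every face of every simplex of $\sigma_2$ remains in $\int_N(S_0)$, so $\partial\sigma_2$ has support in $R_0\cap\int_N(S_0)=\emptyset$.

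Second, regularity of the filtration pair gives $\int_N(S_0)\subset N\setminus L$, so no simplex of $\sigma_2$ meets the exit set and $f(\sigma_2)\subset\int(N)\subset N$ is again an absolute cycle. Under the natural isomorphisms $\widetilde H_r(N/R_i)\cong H_r(N,R_i)$, applying $\bar f^{S_1,S_0}$ to $\pi_{R_0}(\sigma_2)$ gives the chain $\pi_{R_1}(f(\sigma_2))$ in $N/R_1$, whose lift to $H_r(N,R_1)$ is represented by $f(\sigma_2)$. Hence $\sigma':=f(\sigma_2)$ represents $\bar f^{S_1,S_0}_{*,r}(\kappa)$. Since subdivision preserves supports and each simplex of $\sigma_2$ lies in $S_0$, we have $\sigma_2\subset \sigma\cap S_0$ and therefore $\sigma'=f(\sigma_2)\subset f(\sigma\cap S_0)$, as required.

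The main technical step is the subdivision itself: since $R_0$ is closed rather than open, the small-chains theorem requires an open cover refining $\{R_0,S_0\}$. I would build one by thickening $R_0$ across the common interface $\partial_N S_0=\partial_N R_0$ via a small collar, which is available because regularity of $(N,L)$ makes the decomposition $N=R_0\cup S_0$ a regular $2$-decomposition with bicollared interface; the thickening must be arranged so that its closure stays disjoint from a closed inner core of $S_0$ containing the relevant part of $\sigma$. Once this is in place, the decomposition $\sigma=\sigma_1+\sigma_2$ with the desired support properties follows from the standard subdivision argument, and the rest of the proof proceeds as above.
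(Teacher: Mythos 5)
Your subdivision step has a genuine gap. The closed cover $\{R_0,S_0\}$ of $N$ admits no open refinement: since $R_0\cap S_0=\partial_N S_0$, one has $\int_N(R_0)\cup\int_N(S_0)=N\setminus\partial_N S_0\neq N$, so any open cover must contain a set straddling the interface. Your collar $W$ is exactly such a set, and $\{W,\int_N(S_0)\}$ therefore does \emph{not} refine $\{R_0,S_0\}$. After subdivision, $\sigma_1$ lies in $C_r(W)$, not in $C_r(R_0)$: however fine the subdivision, there remain simplices supported in $W$ that straddle $\partial_N S_0$ and hence lie in neither $R_0$ nor $\int_N(S_0)$. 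Consequently, the argument that $\partial\sigma_2$ has support in $R_0\cap\int_N(S_0)=\emptyset$ fails; one only gets that $\partial\sigma_2$ is supported in the collar $W\cap\int_N(S_0)$, which is nonempty whenever $\partial_N S_0\neq\emptyset$. Without $\sigma_1\in C_r(R_0)$, neither $[\sigma_2]=\kappa$ in $H_r(N,R_0)$ nor $\partial\sigma_2=0$ is established, and the rest of your argument (applying $f$ to an absolute cycle, lifting to $H_r(N,R_1)$) does not go through.

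The paper thickens on the opposite side, using the dynamics to create the needed room. Since $(N,L)$ is a filtration pair, there is a compact neighborhood $U$ of $\overline{N\setminus L}$ in $N$ with $f(U)\subset N$ and $f(U\cap L)\cap U=\emptyset$, whose component $U_0$ containing $S_0$ may be taken inside $L\cup S_0$. The compact set $N\setminus\int_N(U_0)$ lies inside the \emph{open} set $\int_N(R_0)$, so it admits an open thickening $V\subset\int_N(R_0)$, and the open cover $\{\int_N(U_0),V\}$ genuinely refines $\{U_0,R_0\}$. Discarding the $V$-part after subdivision produces a relative cycle $\sigma_0$ of $(U_0,\overline{U_0\setminus S_0})$ with support in that of $\sigma$; it need not be an absolute cycle, since $f(U)\subset N$ already ensures $f(\sigma_0)$ is a relative cycle of $(N,L)$ representing $\bar f_{*,r}(\kappa)$. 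A second excision yields $\sigma'\subset f(\sigma_0)\cap U$, and the bound $\sigma'\subset f(\sigma\cap S_0)$ then follows from $U_0\subset L\cup S_0$ and $f(U\cap L)\cap U=\emptyset$. These dynamical conditions on $U$ have no counterpart in a purely topological collar, and they are what supply the support control your approach lacks.
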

\begin{proof}
Since $f(\overline{N\setminus L})\subset \int(N)$ and $f(L) \cap (\overline{N \setminus L})=\emptyset$, there exists a compact neighborhood $U$
 of $\overline{N \setminus L}$ in $N$ such that $f(U) \subset N$ and $f(U \cap L) \cap U = \emptyset$. One can suppose additionally that the connected component $U_0$ of $U$ that contains $S_0$ is included in $L\cup S_0$. Let $\sigma$ be a relative $r$-cycle of $(N, \overline{N \setminus S_0})$ that represents a class $\kappa \in H_r(N, \overline{N \setminus S_0})$. By excision, one can find a $r$-cycle $\sigma_0$ of $(U_0, \overline{U_0 \setminus S_0})$, such that $\sigma_0\subset \sigma$, which
represents the class $\kappa$, as a cycle of $H_r(N, \overline{N \setminus S_0})$. Recall that $H_r(N, \overline{N \setminus S_0})$ is a subspace of $H_r(N, L)$ and that the class $\bar{f}_{*,r}(\kappa)\in H_r(N, L)$ is represented
by the relative $r$-cycle $f(\sigma_0)$ of $(N,L)$. The class $\bar f^{S_1,S_0}_{*,r}(\kappa)$ is nothing but the projection of $\bar{f}_{*,r}(\kappa)$ in $H_r(N, \overline{N \setminus S_1})$. It is the homology class of $f(\sigma_0)$, seen as relative cycle of $(N, \overline{N \setminus S_1})$. By excision again, it is represented by a $r$-cycle $\sigma'$ of $(U, \overline{U \setminus S_1})$ such that $\sigma'\subset f(\sigma_0)\cap U$.
Observe now that
$$\sigma'\subset f(\sigma \cap U_0) \cap U \subset f(\sigma \cap S_0),$$ because
$U_0\subset L\cup S_0$ and $f(U\cap L)\cap U=\emptyset$.
\end{proof}

For every word $I = \{S_k\}_{0\leq k<m} \in \mathcal{S}^m$,
we define the itinerary map
$$\bar{f}^{I}_{*,r} =
 \bar{f}^{S_0,S_{m-1}}_{*,r} \circ \bar{f}^{S_{m-1},S_{m-2}}_{*,r} \circ
\ldots \circ \bar{f}^{S_2,S_1}_{*,r} \circ \bar{f}^{S_1,S_0}_{*,r}.$$
The maximal compact invariant set which follows the itinerary defined by $I$ will be denoted
$\Inv(I) = \bigcap_{k \in \mathds{Z}} f^{-k}(S_k)$, where $\{S_k\}_{k\in\mathds{Z}}$ is the $m$-periodic extension of $I$.

\begin{proposition}\label{prop:invariantempty}
For any word $I$, if $\Inv(I) = \emptyset$ then the map $\bar{f}^{I}_{*,r}$
is nilpotent, hence, in particular, $\trace(\bar{f}^{I}_{*,r}) = 0$.
\end{proposition}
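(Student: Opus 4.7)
The plan is to iterate Lemma \ref{lem:nestedcyclesgeneral} to express $(\bar{f}^{I}_{*,r})^n(\kappa)$ by a representative cycle supported in a progressively smaller closed subset of $S_0$, and then to use the hypothesis $\Inv(I)=\emptyset$ together with a compactness argument to force this set to be eventually empty.

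First I would fix a class $\kappa \in H_r(N, \overline{N\setminus S_0})$, represented by a relative cycle $\sigma$, and apply Lemma \ref{lem:nestedcyclesgeneral} successively to each factor in the composition defining $\bar{f}^{I}_{*,r}$. A straightforward induction on the length $p$ shows that the class obtained after $p$ single-step itinerary maps is represented by a cycle contained in $f^p(\sigma\cap K_p)$, where
$$K_p \;=\; S_0\,\cap\, f^{-1}(S_1)\,\cap\,\dots\,\cap\, f^{-(p-1)}\bigl(S_{(p-1)\bmod m}\bigr),$$
with indices understood in the $m$-periodic extension of $I$. In particular, $(\bar{f}^{I}_{*,r})^n(\kappa)$ is represented by a cycle supported in $f^{mn}(\sigma\cap K_{mn})$, so nilpotence reduces to the assertion that $K_p=\emptyset$ for some $p$.

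The main obstacle is deducing this from the hypothesis $\Inv(I)=\emptyset$, since the forward-invariant set $\Lambda^+(I):=\bigcap_{p\ge 0}K_p$ might a priori be nonempty even when $\Inv(I)$ is empty. I would address this as follows: $\Lambda^+(I)$ is a closed subset of the compact set $S_0$, hence compact, and the $m$-periodicity of $I$ ensures $f^m(\Lambda^+(I))\subset \Lambda^+(I)$. Were $\Lambda^+(I)$ nonempty, the nested sequence of nonempty compact sets $\{f^{mn}(\Lambda^+(I))\}_{n\ge 0}$ would, by the finite intersection property, have a nonempty intersection $Z$ on which $f^m$ acts surjectively. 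Any $y\in Z$ would then admit an $f^m$-backward orbit inside $\Lambda^+(I)$ which, interpolated by forward $f$-iterates, yields a bi-infinite orbit through $y$ following the periodic extension of $I$, producing a point of $\Inv(I)$ and contradicting the hypothesis.

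Once $\Lambda^+(I)=\emptyset$ is established, the decreasing family of compact subsets $K_p$ of $S_0$ has empty intersection, so $K_p=\emptyset$ for every sufficiently large $p$. Choosing $n$ so that $mn$ exceeds this threshold, the representative of $(\bar{f}^{I}_{*,r})^n(\kappa)$ constructed above is the empty chain; hence $(\bar{f}^{I}_{*,r})^n=0$ on the whole of $H_r(N,\overline{N\setminus S_0})$, proving that $\bar{f}^{I}_{*,r}$ is nilpotent and, a fortiori, that $\trace(\bar{f}^{I}_{*,r})=0$.
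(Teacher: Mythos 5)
Your proof is correct and takes essentially the same approach as the paper: iterate Lemma \ref{lem:nestedcyclesgeneral} to trap a representative cycle of $(\bar{f}^I_{*,r})^n(\kappa)$ in $f^{mn}$ of a nested intersection of preimages, then reduce everything to showing that the stable set along the itinerary, $\bigcap_{k\ge 0}f^{-k}(S_k)$, is empty. The only superficial difference is that you spell out the step from ``nonempty stable set'' to ``nonempty $\Inv(I)$'' via a surjectivity and backward-orbit argument, whereas the paper observes directly that $\Inv(I)=\bigcap_{k\ge 0}f^{km}(\Lambda^-)$ and invokes compactness of the nested images.
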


\begin{proof}
Suppose $I = \{S_k\}_{0 \le k < m}$ and write $\{S_k\}_{k \in \mathds{Z}}$ for the $m$-periodic extension of $I$.
The stable set $\Lambda^- = \bigcap_{k\geq 0} f^{-k}(S_k)$, must be empty. Otherwise,
 $\Lambda^-$ would be a non-empty compact set such that $f^{m}(\Lambda^-) \subset \Lambda^-$, hence
$\Inv(I) = \bigcap_{k\geq 0} f^{km}(\Lambda^-) \neq \emptyset$. Thus, there exists $n_0\geq 1$ such that
$\bigcap_{0\leq k\leq mn_0} f^{-k}(S_k)=\emptyset$.

Fix a class $\kappa \in H_r(N,\overline{N \setminus S_0})$ represented by a relative cycle $\sigma_0$
in $(N,\overline{N\setminus S_0})$.
By Lemma \ref{lem:nestedcyclesgeneral}, we can construct inductively a sequence of relative cycles $\{\sigma_{k}\}_{0\leq k\leq mn_0}$
such that
\begin{itemize}
\item $\sigma_{k}$ represents $\bar f^{S_{k},S_{k-1}}_{*,r} \circ \ldots
 \circ \bar f^{S_{1},S_{0}}_{*,r}  (\kappa)$ and
\item $\sigma_{k+1} \subset f(\sigma_{k} \cap S_k)$.
\end{itemize}
Then, one deduces that $f^{-mn_0}(\sigma_{mn_0} )\subset\bigcap_{0\leq k\leq mn_0} f^{-k}(S_k)=\emptyset$.
Therefore, one has $(\bar {f}^{I}_{*,r})^{n_0}(\kappa) = 0$.
\end{proof}

An itinerary which defines a non-nilpotent map must necessarily contain a non-trivial invariant set. Thus, if we assume
that $X$ is connected and $S$ denotes the unique connected component of $\overline{N \setminus L}$ which contains $X$,
 we obtain that the only itinerary followed by points of $X$ must be constant equal to $S$, hence
\begin{equation*}
\trace(\textit{h}_r(f^n, X)) =
 \trace((\textit{h}_r(f, X))^n) =
\trace((\bar{f}_{*,r})^n) =
 \sum_{I \in \mathcal{S}^n} \trace(\bar{f}^{I}_{*,r}) =
 \trace((\bar f^{S,S}_{*,r})^n).
 \end{equation*}

\noindent We have proved the following:

 \begin{proposition}
If $X$ is connected, the endomorphisms $\bar{f}_{*,r}$ and $\bar f_{*,r}^{S,S}$
are spectrum equivalent.
\end{proposition}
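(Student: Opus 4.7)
The plan is to show directly that $\trace((\bar f_{*,r})^n)=\trace((\bar f^{S,S}_{*,r})^n)$ for every $n\ge 1$, which by Definition \ref{def:spectrumequivalent} is exactly spectrum equivalence. The entire content of the displayed chain of equalities appearing just before the proposition will be the backbone of the argument; I would present it carefully and justify each equality.

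First, I would invoke the direct sum decomposition
\[
H_r(N,L)=\bigoplus_{S\in\mathcal{S}} H_r(N,\overline{N\setminus S})
\]
and the corresponding block decomposition $\bar f_{*,r}=\sum_{S_0,S_1\in\mathcal{S}}\bar f^{S_1,S_0}_{*,r}$. Iterating and expanding $(\bar f_{*,r})^n$ as a sum over all length-$n$ words $I\in\mathcal{S}^n$, then applying $\trace$ and the fact that trace of a block endomorphism only collects the diagonal blocks (here indexed by cyclic words), yields
\[
\trace((\bar f_{*,r})^n)=\sum_{I\in\mathcal{S}^n}\trace(\bar f^{I}_{*,r}).
\]

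Next, I would eliminate almost all summands by means of Proposition \ref{prop:invariantempty}: if $\Inv(I)=\emptyset$ then $\bar f^{I}_{*,r}$ is nilpotent, hence has zero trace. So only those $I=\{S_k\}_{0\le k<n}$ with $\Inv(I)\neq\emptyset$ contribute. The key step, and the only place the connectedness of $X$ enters, is to show that in that case $I$ must be the constant itinerary $(S,S,\dots,S)$. Indeed, any point $x\in\Inv(I)$ has full forward and backward orbit contained in $\overline{N\setminus L}$, so $x\in\Lambda^+(f,\overline{N\setminus L})\cap\Lambda^-(f,\overline{N\setminus L})=X$; since $X$ is connected it lies in a single component of $\overline{N\setminus L}$, namely $S$, and thus $f^k(x)\in X\subset S$ for all $k$, forcing $f^k(x)\in S\cap S_k$ and therefore $S_k=S$.

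Combining these observations, the only surviving itinerary is $I=(S,\dots,S)$, which contributes exactly $\trace((\bar f^{S,S}_{*,r})^n)$, and we obtain the desired equality of traces for every $n$. The main (and really only) subtlety is the connectedness step, where one must be careful to identify $\Inv(I)$ as a subset of $X$ using that $(N,L)$ is a filtration pair with $X=\Inv(\overline{N\setminus L})$; everything else is a bookkeeping argument on the block decomposition together with an application of Proposition \ref{prop:invariantempty}.
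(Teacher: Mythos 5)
Your proof is correct and follows the same route as the paper: decompose $H_r(N,L)$ into the summands $H_r(N,\overline{N\setminus S})$, expand $\trace((\bar f_{*,r})^n)$ as a sum over length-$n$ itineraries, kill all but the constant itinerary $(S,\dots,S)$ via Proposition \ref{prop:invariantempty} together with the connectedness of $X$, and read off the equality of traces. The paper presents this as a short chain of equalities immediately preceding the proposition; your write-up simply spells out the steps and the identification $\Inv(I)\subset\Lambda^+\cap\Lambda^-=X$ in more detail, which is exactly the justification the paper has in mind.
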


We will conclude this subsection by looking at a particular situation. Assume that there is a compact set $L' \subset \int_N(L)$ such that
$f(\overline{N \setminus L'})\subset N$. If $S'$ is the connected component of $\overline{N \setminus L'}$ which contains $S$,
denote $$e_r : H_r(N, \overline{N \setminus S}) \to H_r(S', \overline{S' \setminus S})$$ the excision isomorphism and
$$f_{*,r}: H_r(S', \overline{S' \setminus S})\to H_r(N, \overline{N \setminus S})$$ the map induced by $f$.
One can write $\bar{f}_{*,r}^{S,S} = f_{*,r} \circ e_r$.
It will be more convenient to deal with $\widetilde{f}_{*,r} := e_r\circ f_{*,r}$. This map being conjugate to $\bar{f}^{S,S}_{*,r}$, one can state:

\begin{proposition}\label{prop:spectrumequivalent}
If $X$ is connected, the endomorphism $\widetilde{f}_{*,r}$ belongs to
the spectrum equivalence class of the $r$-homological Conley index.
\end{proposition}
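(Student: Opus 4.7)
The plan is to identify $\widetilde{f}_{*,r}$ as a \emph{conjugate} of $\bar{f}^{S,S}_{*,r}$ and then invoke the immediately preceding proposition, which already recognizes $\bar{f}^{S,S}_{*,r}$ as spectrum equivalent to $\bar{f}_{*,r}$, a representative of $\textit{h}_r(f, X)$, under the hypothesis that $X$ is connected. By transitivity of spectrum equivalence, this will suffice.

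The crux is formal. By construction one has $\bar{f}^{S,S}_{*,r} = f_{*,r}\circ e_r$ and $\widetilde{f}_{*,r} = e_r\circ f_{*,r}$, and $e_r$ is an isomorphism. (The excision axiom applies because $N\setminus S'$ is open in $N$, since $S'$ is closed as a connected component of $\overline{N\setminus L'}$, and $L'\subset\int_N(L)$ forces every component of $\overline{N\setminus L}$ distinct from $S$ to be disjoint from $S'$; hence $N\setminus S'\subset\int_N(\overline{N\setminus S})$, which is exactly what excision of the pair $(N,\overline{N\setminus S})$ down to $(S',\overline{S'\setminus S})$ requires.) With $e_r$ invertible, the identity
$$e_r\circ \bar{f}^{S,S}_{*,r}\circ e_r^{-1} \;=\; e_r\circ f_{*,r}\circ e_r\circ e_r^{-1} \;=\; e_r\circ f_{*,r} \;=\; \widetilde{f}_{*,r}$$
exhibits $\widetilde{f}_{*,r}$ and $\bar{f}^{S,S}_{*,r}$ as conjugate linear endomorphisms of finite-dimensional $\mathds{Q}$-vector spaces. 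Conjugate endomorphisms share their Jordan canonical form, so in particular $\trace(\widetilde{f}_{*,r}^{\,n}) = \trace((\bar{f}^{S,S}_{*,r})^{n})$ for every $n\ge 1$, which is exactly the spectrum equivalence of Definition \ref{def:spectrumequivalent}.

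Chaining this with the preceding proposition yields the conclusion: under the connectedness assumption on $X$, $\widetilde{f}_{*,r}$ is spectrum equivalent to $\bar{f}_{*,r}$, hence belongs to the spectrum equivalence class of $\textit{h}_r(f, X)$. There is no real obstacle in this argument; the only non-formal point is the excision verification, which is a routine consequence of the regular filtration pair structure and the hypothesis $L'\subset\int_N(L)$. Everything else is the familiar observation that $AB$ and $BA$ are spectrum equivalent, strengthened here to outright conjugacy by the invertibility of $e_r$.
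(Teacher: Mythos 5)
Your argument is exactly the paper's: $\widetilde{f}_{*,r}=e_r\circ f_{*,r}$ is conjugate to $\bar f^{S,S}_{*,r}=f_{*,r}\circ e_r$ via the invertible excision isomorphism $e_r$, and the preceding proposition already identifies the spectrum of $\bar f^{S,S}_{*,r}$ with that of $\bar f_{*,r}$ when $X$ is connected. One remark on the excision aside, which the paper does not spell out: the condition you state, $N\setminus S'\subset\int_N(\overline{N\setminus S})$, is not the excision hypothesis (one needs the \emph{closure} of the excised set inside $\int_N(\overline{N\setminus S})$, or equivalently that $\int_N(S')\cup\int_N(\overline{N\setminus S})=N$), and this ultimately reduces to $S'$ being a neighborhood of $S$ in $N$; that is indeed true, but it follows from $\overline{N\setminus L}\subset\int_N(\overline{N\setminus L'})$ (a consequence of $L'\subset\int_N(L)$) together with the openness of the component $S'$ in $\overline{N\setminus L'}$, rather than from your intermediate assertion that every component of $\overline{N\setminus L}$ other than $S$ is disjoint from $S'$, which is neither needed nor evidently true.
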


To sum up, if $X$ is connected the trace computations can be done with the map $\widetilde f_{*,r}$, which is not directly induced in general
by a filtration pair but arises as the result of considering just the connected component containing $X$. A complete description
of this map for $r = 1$ will allow us to prove a closed formula for the traces of the iterates of the $1$-homological discrete Conley index.

Finally, we state the particular case $S_0 = S_1 = S$ of Lemma \ref{lem:nestedcyclesgeneral},
with $\widetilde f_{*,r}$ instead of $\bar f_{*,r}^{S,S}$, which will be used
in the proofs as a way to keep control of the successive images of an homology class under the map $\widetilde f_{*,r}$.
\begin{lemma}\label{lem:nestedcycles}
If $\kappa \in H_r(S', \overline{S' \setminus S})$ is represented by a relative $r$-cycle $\sigma$,
the homology class $\widetilde f_{*,r}$ is represented by a relative cycle $\sigma'$ such that $\sigma' \subset f(\sigma \cap S)$.
\end{lemma}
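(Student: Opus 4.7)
The plan is to deduce this from Lemma \ref{lem:nestedcyclesgeneral} (applied in the diagonal case $S_0 = S_1 = S$) by transporting through the excision isomorphism $e_r$. From the identities $\bar{f}^{S,S}_{*,r} = f_{*,r} \circ e_r$ and $\widetilde{f}_{*,r} = e_r \circ f_{*,r}$ stated just before the lemma, the two endomorphisms are conjugate via $e_r$, so $\widetilde{f}_{*,r}(\kappa) = e_r\bigl(\bar{f}^{S,S}_{*,r}(e_r^{-1}(\kappa))\bigr)$. Hence tracking representatives through this conjugation will translate the geometric containment in Lemma \ref{lem:nestedcyclesgeneral} into the one asserted here.

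Given a relative $r$-cycle $\sigma$ of $(S', \overline{S' \setminus S})$ representing $\kappa$, I would first regard the same chain $\sigma$ as a relative $r$-cycle of $(N, \overline{N \setminus S})$: on the chain level this is exactly how $e_r^{-1}$ acts, so $\sigma$ represents $e_r^{-1}(\kappa)$. Lemma \ref{lem:nestedcyclesgeneral} then produces a relative $r$-cycle $\tau$ of $(N, \overline{N \setminus S})$ representing $\bar{f}^{S,S}_{*,r}(e_r^{-1}(\kappa))$ with $\tau \subset f(\sigma \cap S)$. Converting $\tau$ back through $e_r$ to a cycle of $(S', \overline{S' \setminus S})$ gives the candidate $\sigma'$ representing $\widetilde{f}_{*,r}(\kappa)$.

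The delicate point, which I expect to be the main obstacle, is that we want this final $\sigma'$ to lie in $S'$ while preserving the inclusion $\sigma' \subset f(\sigma \cap S)$. The cleanest execution is to rerun the proof of Lemma \ref{lem:nestedcyclesgeneral} verbatim, with the compact neighborhood $U$ appearing there replaced by $S'$: the role of the hypothesis $f(U) \subset N$ is played by $f(\overline{N \setminus L'}) \subset N$, which is precisely the assumption introduced to define $\widetilde{f}_{*,r}$, while the analog of $f(U \cap L) \cap U = \emptyset$ is furnished by the filtration pair condition $f(L) \cap \overline{N \setminus L} = \emptyset$ together with $L' \subset \int_N(L)$, which force the portion of $f(\sigma \cap S)$ relevant to the excision to sit inside the connected component $S'$. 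This direct rerun manufactures $\sigma'$ inside $S'$ from the start, and the containment $\sigma' \subset f(\sigma \cap S)$ is inherited via the same chain of inclusions ($S \subset S \cup L$, images of $L$ avoiding the relevant neighborhood) as in the proof of Lemma \ref{lem:nestedcyclesgeneral}.
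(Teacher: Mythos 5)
Your opening move is exactly the paper's: the paper presents Lemma \ref{lem:nestedcycles} as the particular case $S_0=S_1=S$ of Lemma \ref{lem:nestedcyclesgeneral}, transported through the conjugation $\widetilde f_{*,r}=e_r\circ\bar f^{S,S}_{*,r}\circ e_r^{-1}$, and gives no separate proof. Regarding $\sigma$ as a relative cycle of $(N,\overline{N\setminus S})$, reading off $e_r^{-1}(\kappa)$, and invoking Lemma \ref{lem:nestedcyclesgeneral} to produce $\tau\subset f(\sigma\cap S)$ representing $\bar f^{S,S}_{*,r}(e_r^{-1}(\kappa))$ is all correct.

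Where you go astray is in declaring the ``transport back'' through $e_r$ to be the delicate point and then proposing to rerun the proof of Lemma \ref{lem:nestedcyclesgeneral} with $U$ replaced by $S'$. That substitution does not make literal sense: in that proof $U$ is a compact \emph{neighborhood of all of} $\overline{N\setminus L}$ in $N$ whose relevant component $U_0$ is contained in $L\cup S_0$, whereas $S'$ is only a neighborhood of the single component $S$, may contain other components of $\overline{N\setminus L}$, and the hypothesis $f(S'\cap L)\cap S'=\emptyset$ does not follow from $f(L)\cap\overline{N\setminus L}=\emptyset$ alone (the latter only forbids re-entry into $\overline{N\setminus L}$, not into $S'\cap L$). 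More to the point, no rerun is needed: the cycle $\tau$ you already obtained satisfies $\tau\subset f(\sigma\cap S)\subset f(S)\subset S'$ (property ii) of the triple), and since $\tau\subset S'$ its boundary lies in $\overline{N\setminus S}\cap S'=\overline{S'\setminus S}$, so $\tau$ is \emph{itself} a relative $r$-cycle of $(S',\overline{S'\setminus S})$; the inclusion-induced isomorphism whose inverse is $e_r$ then sends $[\tau]_{(S',\overline{S'\setminus S})}$ to $[\tau]_{(N,\overline{N\setminus S})}=\bar f^{S,S}_{*,r}(e_r^{-1}(\kappa))$, i.e.\ $[\tau]_{(S',\overline{S'\setminus S})}=\widetilde f_{*,r}(\kappa)$. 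Taking $\sigma'=\tau$ finishes the proof with no further excision argument. So the idea is right, but the final step should be the one-line observation $f(\sigma\cap S)\subset f(S)\subset S'$ rather than a problematic verbatim rerun of the earlier proof.
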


\subsection{Invariant acyclic continua}\label{subsec:theorems}

Henceforth, we will assume that our isolated invariant set $X$ is connected and acyclic. In the previous
subsection we proved that it suffices to know the traces of the iterates of the map
$\widetilde f_{*,r}$ in order to compute the sequence $\{\trace(\textit{h}_r(f^n, X))\}_{n \ge 1}$,
they are equal.
In this subsection we provide the statement of a result from which all the theorems presented
in the introduction follow easily, hence it is the key result of the article.

\begin{theorem}\label{thm:key}
Let $f$ be a local homeomorphism of $\mathds{R}^d$ and $X$ an isolated invariant acyclic continuum.
There exists two finite maps $\varphi : J \to J$ and $\psi : J' \to J'$ and, for every $r \ge 1$, a
representative $\widetilde{f}_{*,r}$ of the spectrum equivalence class of the $r$-homological discrete Conley index of $X$ and $f$
such that:
\begin{itemize}
\item $\widetilde{f}_{*,1}$ is a reduced permutation endomorphism defined by $\varphi$.
\item $\widetilde{f}_{*,r}$ is dominated by $\psi$.
\item $\varphi$ and $\psi$ are shift equivalent.
\end{itemize}
\end{theorem}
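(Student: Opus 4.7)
The plan is to compute $\widetilde{f}_{*,r}$ directly from the long exact sequence of the pair $(S',\overline{S'\setminus S})$, where $S$ is the component of $\overline{N\setminus L}$ containing $X$ for a regular filtration pair $(N,L)$, and $S'$ is the component of $\overline{N\setminus L'}$ containing $S$ for some $L'\subset\int_N(L)$ with $f(\overline{N\setminus L'})\subset N$. By Proposition \ref{prop:spectrumequivalent} it suffices to work with $\widetilde{f}_{*,r}$ acting on $H_r(S',\overline{S'\setminus S})$. I would organise the argument in four steps.

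\textbf{Step 1 (acyclicity reduction).} The filtration pair can be chosen arbitrarily close to $X$. By Proposition \ref{prop:acyclic}, after shrinking enough I may arrange that the inclusion $S'\hookrightarrow U$ into a slightly larger neighbourhood of $X$ induces the zero map on $H_r$ for $r\geq 1$. Since the $r$-homological Conley index is only defined up to shift equivalence, by Lemma \ref{lem:gimgker} I may replace $\widetilde{f}_{*,r}$ by its Leray reduction. The key observation is that any class in $H_r(S',\overline{S'\setminus S})$ coming from the image of the canonical map $H_r(S')\to H_r(S',\overline{S'\setminus S})$ is representable by a cycle in $S$ whose $f$-image, by forward invariance and acyclicity, eventually vanishes; hence such classes lie in $\gker(\widetilde{f}_{*,r})$. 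After Leray reduction, I may therefore assume that the long exact sequence of $(S',\overline{S'\setminus S})$ reads as if $H_r(S')=0$ for every $r\geq 1$.

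\textbf{Step 2 (decomposition).} Let $J=\pi_0(\overline{S'\setminus S})$, a finite set since $S'$ is a compact manifold, and denote by $C_j$ its components. With the vanishing from Step 1, the long exact sequence of $(S',\overline{S'\setminus S})$ gives
\begin{equation*}
H_1(S',\overline{S'\setminus S})\,\cong\,\widetilde{H}_0(\overline{S'\setminus S})\,=\,\ker\bigl(\delta:\mathds{Q}^J\to\mathds{Q}\bigr),
\end{equation*}
and for $r\geq 2$ isomorphisms
\begin{equation*}
H_r(S',\overline{S'\setminus S})\,\cong\,H_{r-1}(\overline{S'\setminus S})\,=\,\bigoplus_{j\in J} H_{r-1}(C_j).
\end{equation*}
The first identification exhibits $H_1(S',\overline{S'\setminus S})$ as the reduction of the permutation space $\mathds{Q}^J$, and the second provides the direct summand decomposition required for the domination condition.

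\textbf{Step 3 (the combinatorial map $\varphi$).} The filtration pair property $f(L)\cap \overline{N\setminus L}=\emptyset$ together with the inclusion $\overline{S'\setminus S}\subset L$ forces $f(C_j)\cap N\subset L$ for each $j\in J$. After refining the filtration pair so that $L'$ is sufficiently thin, the connected set $f(C_j)\cap N$ sits inside a single component of $\overline{S'\setminus S}$, and I set $\varphi(j)$ to be the index of that component. If a finer partition is required to make this unambiguous, I obtain an auxiliary finite set $J'$ and a map $\psi:J'\to J'$. Both $\varphi$ and $\psi$ describe the same asymptotic behaviour of $f$ on the components of a neighbourhood of the exit set, and a direct comparison of their induced permutations shows that $\mathcal{L}(\varphi)$ and $\mathcal{L}(\psi)$ are conjugate, so by Proposition \ref{prop:lerayreductionfinite} the two finite maps are shift equivalent.

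\textbf{Step 4 (matching the action).} Naturality of the long exact sequence combined with Lemma \ref{lem:nestedcycles} proves the three claims. A relative 1-cycle $\sigma$ whose boundary represents $[C_j]-[C_{j_0}]\in\widetilde{H}_0(\overline{S'\setminus S})$ is mapped by $\widetilde{f}_{*,1}$ to a cycle $\sigma'\subset f(\sigma\cap S)$; its endpoints lie in $f(C_j)\cap N\subset C_{\varphi(j)}$ and in $f(C_{j_0})\cap N\subset C_{\varphi(j_0)}$, so $\partial\sigma'$ represents $[C_{\varphi(j)}]-[C_{\varphi(j_0)}]$, which is exactly the action of the reduced permutation endomorphism defined by $\varphi$. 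The analogous chain-level containment for $r\geq 2$ shows that $\widetilde{f}_{*,r}$ maps the summand $H_{r-1}(C_j)$ into $H_{r-1}(C_{\varphi(j)})$, yielding the required domination by $\psi$.

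\textbf{Main obstacle.} Step 1 is the crucial and delicate point. The ambient manifold $S'$ is not itself acyclic, so the contribution of $H_r(S')$ to the long exact sequence does not vanish automatically. Killing it requires exploiting that the Conley index is defined only up to shift equivalence together with a dynamical argument that absolute cycles in $S'$ are asymptotically contracted to $X$ under iteration of $f$ and then annihilated by the acyclicity of $X$. Carrying this out rigorously, for instance through a Mittag-Leffler argument over a cofinal family of filtration pairs, is the technical heart of the proof. A secondary subtlety is the well-definedness of $\varphi$ on a chosen partition, resolved by the refinement $\psi$ introduced above.
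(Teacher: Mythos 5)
Your broad outline correctly identifies the long exact sequence of $(S',\overline{S'\setminus S})$ and the role of the subspace $F_r=\im\bigl(H_r(S')\to H_r(S',\overline{S'\setminus S})\bigr)$ in the generalized kernel, and you rightly flag Step~1 as delicate. But there are two genuine gaps in the heart of the argument. First, in Step~3 your definition of $\varphi$ via the whole image $f(C_j)$ is not well-posed: the components $C_j$ of $\overline{S'\setminus S}$ lie in $L$, and $f$ can push a component $C_j$ into $L_0$ or into other components of $\overline{N\setminus L_0}$, so $f(C_j)$ need not land inside $\overline{S'\setminus S}$ at all; shrinking $L'$ does not repair this. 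The paper instead defines $\varphi(c)$ using only the much smaller set $f(c\cap\partial_N S)$, which does land in $\overline{S'\setminus S}$ by the filtration-pair properties, and the reason this is even single-valued is Lemma~\ref{lem:uniquecc}: the acyclicity of $X$ (through property~(v) and a Lefschetz-duality argument) forces each $c\cap\partial_N S$ to be \emph{connected}. Your proposal does not contain this lemma, and without it $\varphi$ is not defined.

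Second, your Step~4 claim that ``the analogous chain-level containment for $r\geq 2$ shows that $\widetilde{f}_{*,r}$ maps $H_{r-1}(C_j)$ into $H_{r-1}(C_{\varphi(j)})$'' does not follow from Lemma~\ref{lem:nestedcycles}. An element of $H_{r-1}(C_j)$, pulled back through the connecting isomorphism, is represented by a relative $r$-cycle $\sigma$ whose boundary lies in $C_j$ but whose support ranges over all of $S'$; thus $\sigma\cap S$ is not confined near $c\cap\partial_N S$, and $f(\sigma\cap S)\cap\overline{S'\setminus S}$ can meet several components of $\overline{S'\setminus S}$, breaking the domination. The paper's crucial extra ingredient, absent from your proposal, is the stable set $\Lambda^+=\bigcap_{k\geq 0}f^{-k}(S)$: one decomposes not by $\pi_0(\overline{S'\setminus S})$ but by $\mathcal{C}^*$, the components of $S'\setminus\Lambda^+$ meeting $\overline{S'\setminus S}$, and replaces your summands by the spaces $E_r^C(U)$ of classes represented by cycles in $C\cup U$ for a small neighborhood $U$ of $X$. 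Proposition~\ref{prop:gimunstableset} shows $\gim(\widetilde{f}_{*,r})$ lives in $E_r(U)$, Proposition~\ref{prop:gimmeetstable} shows that components of $\overline{S'\setminus S}$ in the same element of $\mathcal{C}^*$ eventually have the same $\varphi$-image, and the resulting $\psi$ is a \emph{coarser} (not finer, as your ``refinement'' suggests) quotient of $\varphi$. None of this apparatus is in your outline, so as written the plan would fail precisely at the domination and shift-equivalence claims.
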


The proof of this theorem is the content of Section \ref{sec:proofthm}. We have all ingredients
to prove Theorems A, B and C. The particular cases where $X$ is an attractor or a repeller have
already been addressed in Subsection \ref{subsec:attractorrepeller}.

\begin{proof}[Proof of Theorem A]
It is straightforward once we apply Propositions \ref{prop:reducedpermutationtrace} and \ref{prop:permutationendotrace}
to the reduced permutation endomorphism $\widetilde{f}_{*,1}$, which is spectrum equivalent to any map in the class $\textit{h}_1(f, X)$.
\end{proof}

\begin{proof}[Proof of Theorem B]
From Theorem A we get that $\trace(\textit{h}_1(f, X)) = -1$ if and only if $\varphi$ is fixed point free.
In that case, $\psi$ is also fixed point free because it is shift equivalent to $\varphi$.
The trivial remark that follows Definition \ref{def:dominatedendo} finishes the proof.
\end{proof}

\begin{proof}[Proof of necessity of Theorem C]
From equation (\ref{eq:lefschetzsimple}) we obtain that for $n \ge 1$,
\begin{equation}\label{eq:ThmC1}
i(f^n, X) = - \trace(\textit{h}_1(f^n, X)) + \trace(\textit{h}_2(f^n, X)).
\end{equation}
Theorem A and Szymczak's duality tell us that there exist two finite maps
$\varphi : J \to J$ and $\varphi' : J' \to J'$ such that
$$\trace(\textit{h}_1(f^n, X)) = -1 + \#\Fix(\varphi^n)$$
and
$$\trace(\textit{h}_2(f^n, X)) = (-1)^n(-1 + \#\Fix((\varphi')^n)).$$
Plugging these expressions into (\ref{eq:ThmC1}), we deduce that
\begin{equation*}
i(f^n,X) =
\begin{cases}
2 - \#\Fix(\varphi^n) - \#\Fix((\varphi')^n) & \text{if }n \ge 1 \text{ is odd} \\
-\#\Fix(\varphi^n) + \#\Fix((\varphi')^n) & \text{if }n \ge 1 \text{ is even.}
\end{cases}
\end{equation*}
If we denote $b_k$ and $c_k$ the number of $k$-periodic orbits of $\varphi$ and $\varphi'$, respectively, we get
\begin{equation*}
i(f^n,X) =
\begin{cases}
 2 - \sum_{k | n} k \cdot(b_k + c_k) & \text{if }n \ge 1\text{ is odd} \\
-\sum_{k | n} k \cdot(b_k - c_k) & \text{if }n \ge 1 \text{ is even.}
\end{cases}
\end{equation*}
Thus, a careful computation shows that if we define
\begin{equation}\label{eq:definitionak}
a_k =
\begin{cases}
2 - b_1 - c_1 & \text{if } k = 1, \\
-1 - b_2 + c_2 + c_1 & \text{if } k = 2,\\
- b_k - c_k & \text{if } k > 1 \text{ is odd},\\
-b_k + c_k & \text{if } k > 2 \text{ and } k/2 \text{ are even,}\\
-b_k + c_k + c_{k/2} & \text{if } k > 2 \text{ is even and } k/2 \text{ is odd}.
\end{cases}
\end{equation}
we can write $\{i(f^n, X)\}_{n \ge 1} = \sum_k a_k \sigma^k$.
 Now, it is trivial to check that Corollary \ref{cor:indexinequality} implies that $a_1 \le 1$, hence
 $b_1 + c_1 \ge 1$ and, evidently, $a_k \le 0$ for all odd $k > 1$.
 Moreover, there are only a finite number of non-zero $b_k$ and $c_k$, hence of $a_k$,
which implies that the sequence of fixed point indices must be periodic.
\end{proof}

\section{A toy model: a radial case}\label{sec:sequences}

\subsection{Proof of the results}\label{subsec:proofradial}

In this section we will sketch a proof of our results for the case in which the homeomorphism
fixes only one point and presents a radial dependence.
 This type of maps will provide us with examples which
realize all possible sequences of fixed point indices of the iterates of a map at a fixed point described in Theorem C.

The $(d + 1)$-dimensional sphere is the end compactification of $S^d \times \mathds{R}$, where
one adds the lower end $e^-$, adherent to $S^d \times (-\infty, 0]$, and the upper end
$e^+$, adherent to $S^d \times [0, + \infty)$.

Let $h$ be a homeomorphism of $S^d$ and $g : S^d \rightarrow \mathds{R}$
be a continuous map. The skew-product of $g$ and $h$
$$(z, r) \mapsto (h(z), r + g(z))$$
 induces in $S^{d+1}$ a homeomorphism $f$ which fixes the two ends.
An extra hypothesis may be added to ensure that
the origin is isolated as an invariant set. Assume that
\begin{equation}\label{eq:0isolated}
g(z) \ge 0 \Rightarrow g(h(z)) > 0 \tag{P}
\end{equation}
which evidently implies that there exists $\epsilon > 0$ so that
$$g(z) \ge - \epsilon \;\Rightarrow\; g(h(z)) \ge \epsilon.$$
Property (\ref{eq:0isolated}) implies that no discrete interior tangency is possible in
any closed $(d+1)$-ball of the form $(S^d \times (-\infty, r]) \cup \{e^-\}$,
hence it is an isolating block for $f$.
In particular, there are no fixed points in $S^d \times \mathds{R}$.
Assume additionally that the origin is neither a repelling nor an attracting fixed point, which
means that $g$ must take positive and negative values.

Let $l^-$ be a submanifold with boundary of $S^d$ which is a neighborhood of $\{ z\in S^d\,\vert\, g(z)\geq \varepsilon\}$ included in $\{ z\in S^d\,\vert\, g(z) > 0\}$. Observe that
\begin{itemize}
\item $h(l^-) \subset \int(l^-)$.
\item $g > 0$ in $l^-$ and $g < 0$ in $S^d \setminus h^{-1}(l^-)$.
\end{itemize}
Define $l^+ = \overline{S^d \setminus l^-}$, then $h^{-1}(l^+) \subset \int(l^+)$.
After identifying $S^d$ to $S^d \times \{0\}$, the sets $l^-$ and $l^+$ may be considered
as subsets of $S^d \times \{0\} \subset S^d \times \mathds{R}$. If we set $N = (S^d \times (-\infty, 0]) \cup \{e^-\}$,
the triple $(N, l^-, l^+)$ is a filtration triple as defined in Subsection \ref{subsec:duality}.

The map $\bar {f}$ induced by $f$ on the quotient $N/l^-$ induces an endomorphism
$\bar{f}_{*,r} : H_r(N, l^-) \to H_r(N, l^-)$ which is a representative of the $r$-homological Conley index.
Each space $H_r(N)$ being trivial if $r\not=0$ and $1$-dimensional if $r=0$,
the connecting map
$$\partial_r: H_r(N,l^-) \to H_{r-1}(l^-),$$ induces an isomorphism between $H_r(N,l^-)$ and the reduced $r$-homological group  $\widetilde  H_{r-1}(l^-)$, where
  \begin{equation*}
\widetilde  H_{r}(l^-)= \begin{cases}
H_{r}(l^-)& \text{if} \enskip  r\geq 1 \\
  \ker (j_*) &\text{if} \enskip  r=0
\end{cases}
\end{equation*}
 and $$j_* : H_0(l^-)\to H_0(N)$$ is the inclusion-induced map.

There is an easy way to understand the inverse of the connecting map. Denoting $\Delta_r$ the standard affine simplex, one can associate to every singular $r$-simplex $\sigma:{\Delta_r}\to l^-$ a singular $(r+1)$-simplex $p(\sigma) : {\Delta_{r+1}}\to N$ defined as follows:
 \begin{equation*}
 p(\sigma)(t_1, \dots t_{r+1})= \begin{cases}
\left(  \sigma\left( \frac{t_1}{1-t_{r+1}}, \dots, \frac{t_r}{1-t_{r+1}}\right), \frac{t_{r+1}}{t_{r+1}-1},\right) & \mathrm{if} \enskip t_{r+1}\not =1, \\
e^- &\mathrm{if} \enskip  t_{r+1}=1.
\end{cases}
\end{equation*}
By linear extension, $p$ associates to every $r$-chain $\sigma$ of $l^-$ a $(r+1)$-chain of $N$. If $\sigma$ is a cycle of $l^-$ (inducing an element of $\widetilde  H_{0}(l^-)$ if $r=0$), then $p(\sigma)$ is a relative cycle of $(N,l^-)$. If $\sigma$ is the boundary of a $(r+1)$-chain of $l^-$, then $p(\sigma)$ is the boundary of a relative $(r+2)$-chain of $(N,l^-)$. The morphism $p_*:  \widetilde H_{r}(l^-)\to H_{r+1}(N,l^-)$ naturally induced is nothing but the inverse of $\partial_r$. Observe now that $$\bar{f}_{*,r} ([p(\sigma)])=[p(h(\sigma))].$$ In other words, the map $\bar{f}_{*,r+1} : H_{r+1}(N, l^-) \to H_{r+1}(N, l^-)$ is conjugate to $h_{*,r} :  \widetilde H_r(l^-) \to  \widetilde H_r(l^-)$ by $p_*$. Similarly the map $(\overline{f^{-1}})_{*,r+1} : H_{r+1}(N, l^+) \to H_{r+1}(N, l^+)$ is conjugate to $(h^{-1})_{*,r} :  \widetilde H_r(l^+) \to  \widetilde H_r(l^+)$.

The duality explained in Subsection \ref{subsec:duality} can be deduced from classical duality results. By Alexander's duality, there is a natural isomorphim
$$\widetilde  H_{r}(l^-)\to \widetilde  H^{d-r-1}(l^+),$$ where $\widetilde  H^{d-r-1}(l^+)$ is the dual space of $\widetilde  H_{d-r-1}(l^+)$. This isomorphism conjugates $h_{*,r}$ to the dual map of $(h^{-1})_{*,d-r-1}$ if $h$ preserves the orientation and to its opposite if $h$ reverses the orientation. This morphism induces naturally an isomorphism
$$H_{r+1}(N,l^-)\to \widetilde  H^{d-r}(N,l^+),$$where $\widetilde  H^{d-r}(N,l^+)$ is the dual space of $\widetilde  H_{d-r}(N,l^+)$, which conjugates $\bar{f}_{*,r+1}$ (up to the sign) to the dual map of $(\overline{f^{-1}})_{*,d-r}$.

Theorem \ref{thm:key} is obvious in this case. Here $\psi=\varphi$ is the natural map induced by $h$ on the set $\pi_0(l^-)$ of connected components of $l^-$.
Indeed, every connected component $c$ of $l^-$ is associated to an element $[c]$ of
the homology group $H_0(l^-)$, and the set $\{[c]\}_{c \in \pi_0(l^-)}$
is a basis. Taking $[N]$ as a basis of $H_0(N)$, one has the natural identification $H_0(N) \sim \mathds{Q}$. The map $j_*$ sends every $[c]$ onto $1$. This means that $h_{*,0}{}_{\vert \widetilde H_{0}(l^-)}$ is nothing but the reduced permutation endomorphism defined by $\varphi$. Moreover, for every $r\geq 1$, one has
$$\widetilde{H}_{r}(l^-)= {H}_{r}(l^-) = \bigoplus_{c \in \pi_0(l^-)} {H}_{r}(c)$$
and $h_{*,r}({H}_{r}(c))\subset {H}_{r}(\varphi (c))$.

As a conclusion, note that given a homeomorphism $h$ of $S^d$ and an attractor/repeller regular decomposition $(l^-, l^+)$
of $S^d$ it is possible to define a map $g : S^d \to \mathds{R}$  such that $g > 0$ in $l^-$ and $g < 0$ in $h^{-1}(l^+)$. The triple $(N, l^-, l^+)$
is a filtration triple for the map $f$, induced in $S^{d+1}$ by the skew-product of $g$ and $h$.

\subsection{Realizing all possible sequences $\{i(f^n,p)\}_{n \ge 1}$}

In the second half of this section we will prove the sufficiency condition of Theorem C, which characterizes the sequences $I = \{I_n\}$
of integers that can be realized as the sequence $\{i(f^n, p)\}_{n \ge 1}$ of an orientation-reversing
local homeomorphism $f$ of $\mathds{R}^3$ with a fixed point $p$ isolated as an invariant set.
The class of radial homeomorphisms already defined contains examples of
 homeomorphisms which realize any sequence $I$ satisfying the conditions in the statement
of Theorem C. The idea will be to control the sequence of fixed point indices
in terms of the combinatorial descriptions we have obtained. In order to define the examples
we need orientation-reversing homeomorphisms of $S^2$ with an arbitrary number of periodic orbits.
More precisely, we need to prove the following lemma.

\begin{lemma}\label{lem:arbitraryorbits}
Let $\varphi : J \to J$ be a permutation of the finite set $J$. It is possible to construct
an orientation-reversing homeomorphism $f$ of $S^2$ which permutes a family $\{D_j\}_{j\in J} $ of pairwise disjoint closed disks such that
\begin{itemize}
\item $f(D_j)=D_{\varphi(j)}$, for every $j\in J$;

\item if $f^n(D_j)=D_{j}$ and $n$ is even then $f^n\vert_{D_j}$ is equal to the identity;

\item if $f^n(D_j)=D_{j}$ and $n$ is odd then $f^n\vert_{D_j}$ is conjugate to the map $z\mapsto \bar z$ defined on $\mathds{D}=\{z\in \mathds{C}\, \vert\,\vert z \vert\leq 1\}$.
\end{itemize}
\end{lemma}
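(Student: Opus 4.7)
I plan to construct $f$ as a localized perturbation of a fixed reflection. Decompose $\varphi$ into its disjoint cycles of lengths $n_1,\dots,n_s$ and fix an orientation-reversing involution $\tau:S^2\to S^2$ whose fixed set is a great circle $\Gamma$. Choose pairwise disjoint, $\tau$-invariant closed topological disks $B_1,\dots,B_s\subset S^2$, each meeting $\Gamma$ in a diameter, and inside each $B_\alpha$ pick $n_\alpha$ small pairwise disjoint closed disks $D_{\alpha,0},\dots,D_{\alpha,n_\alpha-1}$ centered on $\Gamma\cap B_\alpha$. Each $D_{\alpha,k}$ is then $\tau$-invariant and $\tau_k:=\tau|_{D_{\alpha,k}}$ is a reflection of the disk, conjugate to $z\mapsto\bar z$ on $\mathds{D}$. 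Globally I set $f=\tau$ on $S^2\setminus\bigcup_\alpha B_\alpha$ and $f|_{B_\alpha}=\tau\circ\sigma_\alpha$ on each $B_\alpha$, where $\sigma_\alpha:B_\alpha\to B_\alpha$ will be an orientation-preserving homeomorphism, the identity on $\partial B_\alpha$, which cycles the interior disks by $\sigma_\alpha(D_{\alpha,k})=D_{\alpha,k+1\bmod n_\alpha}$. Continuity at $\partial B_\alpha$ is automatic because $\sigma_\alpha$ is the identity there; $f$ is globally orientation-reversing; and $\tau$-invariance of each $D_{\alpha,k+1}$ gives $f(D_{\alpha,k})=\tau(D_{\alpha,k+1})=D_{\alpha,k+1}$, which realizes $\varphi$.

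The crucial step is to choose the restrictions $s_k:=\sigma_\alpha|_{D_{\alpha,k}}$ so as to force the prescribed return map. Iterating gives
$$f^{n_\alpha}|_{D_{\alpha,0}}=(\tau_0\circ s_{n_\alpha-1})\circ(\tau_{n_\alpha-1}\circ s_{n_\alpha-2})\circ\cdots\circ(\tau_1\circ s_0).$$
I would pick $s_0,\dots,s_{n_\alpha-2}$ arbitrarily among orientation-preserving homeomorphisms $D_{\alpha,k}\to D_{\alpha,k+1}$ and then solve explicitly $s_{n_\alpha-1}=\tau_0\circ R\circ M^{-1}$, where $R$ is the prescribed return map ($\mathrm{id}$ if $n_\alpha$ is even, $\tau_0$ if $n_\alpha$ is odd) and $M:=\tau_{n_\alpha-1}\circ s_{n_\alpha-2}\circ\cdots\circ\tau_1\circ s_0$. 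A count of the orientation-reversing factors on the right-hand side shows this $s_{n_\alpha-1}$ is orientation-preserving in both parities, so all the $s_k$ are admissible restrictions of an orientation-preserving homeomorphism of $B_\alpha$; the return map on any other $D_{\alpha,k}$ is $f^k$-conjugate to the one on $D_{\alpha,0}$ and therefore also equals the identity or a reflection conjugate to $z\mapsto\bar z$.

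What remains is to realize such a $\sigma_\alpha$. I would first produce an auxiliary orientation-preserving cycling homeomorphism $\sigma_\alpha':B_\alpha\to B_\alpha$, identity on $\partial B_\alpha$, by a concrete sequence of ambient isotopies inside $B_\alpha$: push $D_{\alpha,n_\alpha-1}$ into a holding region through the complement of the other disks, then slide each $D_{\alpha,k}$ with $k<n_\alpha-1$ along $\Gamma$ into the former position of $D_{\alpha,k+1}$, and finally bring the held disk into the former position of $D_{\alpha,0}$. Then I would correct $\sigma_\alpha'$ to the prescribed restrictions by precomposing with homeomorphisms $\Phi_k$ of $B_\alpha$, each supported in a small neighborhood of $D_{\alpha,k+1}$ disjoint from the other disks and from $\partial B_\alpha$, with $\Phi_k|_{D_{\alpha,k+1}}=s_k\circ(\sigma_\alpha'|_{D_{\alpha,k}})^{-1}$; such $\Phi_k$ exist because the group of orientation-preserving self-homeomorphisms of a closed disk, identity on a collar of the boundary, is connected (Alexander's trick). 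The main potential obstacle is the orientation arithmetic that determines $s_{n_\alpha-1}$, and it is exactly this parity count that forces the even/odd dichotomy of the prescribed return map and makes it compatible with $f$ being globally orientation-reversing.
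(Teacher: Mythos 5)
Your construction is correct, but it is a genuinely different route from the paper's. The paper proves the lemma with far less machinery by invoking Homma's extension theorem (Theorem~\ref{thm:homma}): it places equal-radius Euclidean disks with centers on the real axis of $\mathds{C}$, defines $h$ on their union to be the translation carrying $D_j$ onto $D_{\varphi(j)}$ (which obviously preserves cyclic orders of simple triods, since these lie inside single disks), extends $h$ to an orientation-preserving homeomorphism of the Riemann sphere, and sets $f=c\circ h$ where $c$ is the extension of complex conjugation. Because $c$ fixes the real axis and commutes with real translations on each disk, the iterate $f^n$ restricted to a returning disk is $z\mapsto z$ when $n$ is even and $z\mapsto\bar z$ (suitably translated) when $n$ is odd, and the whole even/odd dichotomy falls out with no orientation bookkeeping. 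Your approach instead builds $f$ by hand: take a global reflection $\tau$, isolate each $\varphi$-cycle inside a $\tau$-invariant disk $B_\alpha$, and perturb $\tau$ there by an orientation-preserving cycling homeomorphism $\sigma_\alpha$. The orientation arithmetic on $s_{n_\alpha-1}=\tau_0\circ R\circ M^{-1}$ is exactly right and is where the even/odd dichotomy is enforced in your version, and the final realization of $\sigma_\alpha$ via an explicit cycling isotopy plus corrections supported near the disks (Alexander's trick) is standard. One small slip: you want to \emph{post}-compose $\sigma_\alpha'$ with the $\Phi_k$, not precompose, since $\Phi_k$ is supported near $D_{\alpha,k+1}=\sigma_\alpha'(D_{\alpha,k})$ and the required identity is $\Phi_k\circ\sigma_\alpha'\vert_{D_{\alpha,k}}=s_k$; your formula $\Phi_k\vert_{D_{\alpha,k+1}}=s_k\circ(\sigma_\alpha'\vert_{D_{\alpha,k}})^{-1}$ is set up for post-composition. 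What your route buys is independence from Homma's theorem and an entirely constructive picture; what the paper's route buys is brevity and the transparent algebraic reason for the parity behavior.
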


Several approaches may be taken to prove this result. In this article we will follow the ideas of Homma, see \cite{homma}.
Define a \emph{simple triod} in $S^2$ as the union of three closed arcs having only one point in common, endpoint of every one of them.

\begin{theorem}[Homma]\label{thm:homma}
Let $F$ be a compact, connected and locally connected subset of $S^2$. A one-to-one continuous map
from $F$ to $S^2$ can be extended to an orientation-preserving homeomorphism of $S^2$ if and only if
it preserves the cyclic order of all simple triods contained in $F$.
\end{theorem}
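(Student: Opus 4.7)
The plan is to handle necessity and sufficiency separately. Necessity is immediate: if $H\colon S^2\to S^2$ is an orientation-preserving homeomorphism extending $\varphi$, then for any simple triod $T\subset F$ with central vertex $p$ and arcs $\alpha_1,\alpha_2,\alpha_3$, a sufficiently small disk neighborhood of $p$ is sent by $H$ to a disk neighborhood of $H(p)$ via an orientation-preserving homeomorphism of pairs, so the cyclic order of the three arc germs at $p$ is preserved.

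For sufficiency, my approach is to extend $\varphi$ across each complementary domain of $F$ in $S^2$ in three stages. First, since $F$ is compact, connected, and locally connected, each connected component $U$ of $S^2\setminus F$ is an open simply-connected set whose boundary $\partial U\subset F$ is a locally connected continuum. By Carath\'eodory's prime-end theorem, $U$ admits a canonical compactification by a prime-end circle $S^1_U$ that maps continuously onto $\partial U$, and analogously for the image $V=\varphi(U)$ inside $S^2\setminus \varphi(F)$. The cyclic-order hypothesis, applied to triods centered at points of $\partial U$ with one arc penetrating $U$, forces $\varphi|_{\partial U}$ to lift to an orientation-preserving homeomorphism $S^1_U\to S^1_V$.

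Second, the Jordan--Schoenflies theorem gives that each prime-end compactification $U\cup S^1_U$ is homeomorphic to the closed disk, so the induced circle homeomorphism extends to an orientation-preserving homeomorphism of closed disks and hence, after projection, to a continuous extension of $\varphi$ from $\bar U$ onto $\bar V$. Third, I would paste these extensions together with $\varphi$ on $F$ itself to produce a continuous map $H\colon S^2\to S^2$; bijectivity on each piece, together with the fact that $H$ sends complementary domains of $F$ bijectively onto complementary domains of $\varphi(F)$, yields injectivity of $H$, and compactness then implies that $H$ is a homeomorphism.

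The main obstacle is precisely the cyclic-order-to-circular-order translation: showing that the combinatorial triod hypothesis really forces compatibility of the prime-end circles in a globally orientation-preserving way. The route I would follow is first to settle the case when $F$ is a finite graph embedded in $S^2$, where the cyclic orders at vertices encode a rotation system and a combinatorial Whitney-type rigidity result states that two planar embeddings of a connected graph with the same rotation system differ by an orientation-preserving self-homeomorphism of $S^2$. For the general $F$, local connectedness permits exhausting $F$ by a nested sequence of such graphs, and passing to the limit via an Arzel\'a--Ascoli argument on the resulting sequence of homeomorphisms yields the desired global extension.
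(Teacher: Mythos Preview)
The paper does not prove this theorem: it is stated as Homma's theorem with a citation to \cite{homma} and then applied to prove Lemma~\ref{lem:arbitraryorbits}. There is therefore no proof in the paper to compare against.

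As for your proposal itself, the overall architecture (Carath\'eodory/Schoenflies on complementary disks, or graph approximation plus a limit) is in the spirit of the classical proofs, but several points are genuine gaps rather than routine details. First, writing ``$V=\varphi(U)$'' is a slip: $\varphi$ is defined only on $F$, not on the complementary domain $U$; you must \emph{produce} a bijection between the components of $S^2\setminus F$ and those of $S^2\setminus\varphi(F)$ before you can even name $V$, and this bijection is itself one of the substantive steps (it uses the triod hypothesis to rule out a single face of $F$ being split into two faces of $\varphi(F)$, or vice versa). Second, the prime-end circle $S^1_U$ surjects onto $\partial U$ but is not a homeomorphism in general, so ``$\varphi|_{\partial U}$ lifts to a homeomorphism $S^1_U\to S^1_V$'' is not a statement that makes sense without further work; you need to argue at the level of accessible boundary points and show that the triod hypothesis forces $\varphi$ to respect the prime-end identifications and their cyclic order. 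Third, a compact, connected, locally connected set is a Peano continuum, but it need not be an increasing union of finite subgraphs (think of $[0,1]^2$); what one can do is approximate by nearby finite graphs and control the extensions uniformly, but then the Arzel\`a--Ascoli step requires equicontinuity of the approximating homeomorphisms, which you have not arranged. Each of these points is where the actual content of Homma's argument lives.
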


\begin{proof}[Proof of Lemma \ref{lem:arbitraryorbits}]
Let $ \{D_j\}_{j \in J}$ be a family of pairwise Euclidean circles of same radius, all centered on the real line $\mathds{R}$ in the complex plane $\mathds{C}$.
Let $h: \bigcup_{j\in J} D_j\to \bigcup_{j\in J} D_j$ be the map that translates each disk $D_j$ over $D_{\varphi(j)}$.  It obviously preserves the cyclic order of all simple triods. By Homma's theorem, one can extend it to a map (with the same name) on the Riemann sphere  $\mathds{C}\cup\{\infty\}$. Composing $h$ with the extension of the complex conjugation to the Riemann sphere yields the required map.
\end{proof}

Therefore, using Lemma \ref{lem:arbitraryorbits} we can produce examples of orientation-reversing
homeomorphisms of the 2-sphere having an arbitrary number of periodic orbits with prescribed periods.
Now, we are ready to complete the proof of Theorem C.

\begin{proof}[Proof of sufficiency of Theorem C]
Consider the following system of equations:
\begin{equation*}
a_k = \begin{cases}
2 - b_1 - c_1 & \text{if } k = 1, \\
-1 - b_2 + c_2 + c_1 & \text{if } k = 2, \\
- b_k - c_k & \text{if } k > 1 \text{ is odd}, \\
-b_k + c_k & \text{if } k > 2 \text{ and } k/2 \text{ are even,}\\
-b_k + c_k + c_{k/2} & \text{if } k > 2 \text{ is even and } k/2 \text{ is odd}.
\end{cases}
\end{equation*}
Clearly, there exist two sequences of non-negative integers $\{b_k\}_{k \ge 1}$ and
$\{c_k\}_{k \ge 1}$ which satisfy the system of equations and such that $c_1 = 1$, $b_2 \ge 1$ and $c_k = 0$ for all odd $k > 1$.
Since at most a finite number of $a_k$ are non-zero, we can assume that there are only a finite number
of non-zero integers $b_k$ and $c_k$ as well.
By applying Lemma \ref{lem:arbitraryorbits}, choose an orientation-reversing homeomorphism $h^+$ of a 2-sphere $S^+$ having
$b_k$ cycles of pairwise disjoint disks of period $k$, for every positive integer $k \neq 2$,
which include a couple of 2-periodic disks $\{D^+_1,D^+_2\}$.
This set of periodic disks, excluding $D^+_1$ and $D^+_2$, will be denoted $\mathcal{F}^+$.
Similarly, one may construct an orientation-reversing homeomorphism $h^-$ of a 2-sphere $S^-$
that induces a permutation on a set of pairwise disjoint closed disks, which will be denoted $\mathcal{F}^-$, with $c_k$ cycles
of length $k / 2$ for every even number $k \ge 2$, plus an additional fixed disk $D^-$.
The complement of $D^-$ and all other periodic disks in $\mathcal{F}^-$ will be denoted $\Sigma^-$. One can suppose that our two homeomorphisms satisfy the last two assertions of Lemma \ref{lem:arbitraryorbits}.
Consider two spheres $S^-_1$ and $S^-_2$ of the type $S^-$ together with $S^+$.
Let $s$ be a map which identically identifies $S^-_1$ to $S^-_2$ and viceversa.
Take out the interior of the disks
$D^-$ in $S^-_1$ and $S^-_2$ and the disks $D^+_1$ and $D^+_2$ of $S^+$. Then, paste one boundary $\partial D^-$
to each $\partial D^+_i$, $i = 1,2$. The result is a topological sphere.
The fact that our two homeomorphisms satisfy the assertions of Lemma \ref{lem:arbitraryorbits}
implies that the pasting can be done in such a way that there exists an homeomorphism $h'$  of our sphere coinciding with $h^+$ in the complement of $D^+_1 \cup D^+_2$ and with the composition $s \circ h^- = h^- \circ s$
in the complement of the two disks $D^-$ in $S^-_1 \cup S^-_2$. Construct a homeomorphism $h$ by
composing $h'$ with an orientation-preserving homeomorphism $h''$ such that every disk $D$ of $\mathcal{F}^+$
satisfies $h''(D) \subset \int(D)$ and $h''(\Sigma^-_1) \subset \int(\Sigma^-_2)$ and $h''(\Sigma^-_2) \subset \int(\Sigma^-_1)$.
The union $l^-$ of the disks in $\mathcal{F}^+$, $\Sigma^-_1$ and $\Sigma^-_2$ is an attracting set for $h''$ and, more importantly, for $h$.

The action $h_{*,0}$ of $h$ on the reduced homology group $\widetilde{H}_0(l^-)$ is associated to the reduced permutation automorphism defined by a permutation having $b_k$ cycles of length $k$, for all $k \ge 1$.
The action $h_{*,1}$ of $h$ on $\widetilde{H}_1(l^-)={H}_1(l^-)$ is associated to the opposite of the permutation
automorphism associated to a permutation with $c_k$ cycles of length $k$, for all even $k \ge 2$.
Indeed, the boundaries of the disks of the sets $\mathcal{F}^-$ corresponding to each $S^-_1$ and $S^-_2$ define a basis of ${H}_1(l^-)$.
To study $h_{*,1}$, one can also use the duality  by considering the repeller  $l^+ = \overline{S^2 \setminus l^-}$ and look at the action of $(h^{-1})_{*,0}$ on $\widetilde{H}_0(l^+)$ which is the reduced permutation automorphism defined by a permutation having $c_k$ cycles of length $k$, for all $k \ge 1$.
The traces of the maps $(h_{*,0})^n$ and $(h_{*,1})^n$ are now easy to compute using Propositions \ref{prop:reducedpermutationtrace},
\ref{prop:permutationendotrace} and Szymczak's Duality.
They are equal respectively to $-1+\sum_{k|n} k \cdot b_k$ and to $(-1)^n(-1+ \sum_{k|n} k \cdot c_k)$.
Combining these expressions we obtain,
\begin{equation*}
-\trace((h_{*,0})^n) + \trace((h_{*,1})^n) =
\begin{cases}
2 -\sum_{k | n} k \cdot (b_k + c_k) & \text{if }n \text{ is odd,} \\
-\sum_{k | n} k \cdot (b_k - c_k) & \text{if }n \text{ is even.}\\
\end{cases}
\end{equation*}

As remarked in the end of Subsection \ref{subsec:proofradial}, it is possible
to associate to the attractor/repeller pair $(l^-, l^+)$ a continuous map
$g : S^2 \to \mathds{R}$ such that $g > 0$ in $l^-$ and $g < 0$ in $h^{-1}(l^+)$.
The map $f$ induced in $S^3$ by the skew-product of $h$ and $g$,  is the one
we are looking for. Summarizing what has been done in Subsection \ref{subsec:proofradial},
one knows that the lower end $\{e^-\}$ is an isolated invariant set for $f$ and, for any integer $n \ge 1$,
$$i(f^n, e^-) = - \trace((\bar{f}_{*,1})^n) + \trace((\bar{f}_{*,2})^n)=- \trace((h_{*,0})^n) + \trace((h_{*,1})^n),$$
whose exact value has been computed in terms of the integers $b_k$ and $c_k$.
It remains to realize that the work done in equation (\ref{eq:definitionak}) guarantees that the definition of $b_k$ and $c_k$ leads to the expression
$$\{i(f^n, e^-)\}_{n \ge 1} = \sum_k a_k \sigma^k.$$
\end{proof}

\section{Proof of Theorem \ref{thm:key}}\label{sec:proofthm}

\subsection{Construction of a good filtration pair}

We suppose in this section that $X$ is an isolated invariant acyclic continuum of a local homeomorphism $f$.
Consider a regular filtration pair $(N, L_0)$ for $X$ and $f$ such that $N$ is connected.
The compact set $L'_1=(f^{-1}(L_0)\cap N)\cup L_0$ is a neighborhood of $L_0$ in $N$ such that
$$f(L'_1)\cap (\overline {N\setminus L'_1})\subset (L_0\cap (\overline {N\setminus L'_1}))\cup (f(L_0)\cap (\overline {N\setminus L'_1}))=\emptyset$$
and also that $\overline {N\setminus L'_1}$ is an isolating neighborhood of $X$.
 Therefore, we can find a neighborhood $L_1$ of $L'_1$ in $N$ such that
  \begin{itemize}
\item $(N,L_1)$ is a regular pair,
\item $\overline {N\setminus L_1}$  is an isolating neighborhood of $X$,
\item $f(L_1)\cap (\overline {N\setminus L_1})=\emptyset$,
\item $f(\overline{N\setminus L_1})\subset N\setminus L_0$,
\end{itemize}
In particular, $(N, L_1)$ is a filtration pair.
With the same process, we can define inductively a sequence $(N,L_n)$ of regular filtration pairs of $X$, such that
 \begin{itemize}
 \item $L_{n+1}$ is a neighborhood of $(f^{-1}(L_n)\cap N)\cup L_n$ in $N$,
\item $f(\overline{N\setminus L_{n+1}})\subset N \setminus L_n$.
\end{itemize}
Note that these properties imply that $f(\overline{ L_{n+1}\setminus L_n})\subset \int_N(L_n)$.
Observe that the boundary
of a set $\overline{N \setminus L_{n+1}}$ relative to $N$ and relative to any $\overline{N \setminus L_m}$, $m\leq n$ coincide
 because $\partial_N L_{n+1} \cap L_{n} = \emptyset$ and also that
$f(\partial_N L_n) \cap (\overline{N \setminus L_n}) = \emptyset$.

Let us denote $S_n$ the connected component of $\overline{N\setminus L_n}$ that contains $X$.
Of course one has that $f(S_n)\subset S_{m}$, whenever $0 < m < n$.
Indeed $f(S_n)$ is connected, contains $X$ and does not meet $L_{m}$.
It is trivial to check that each triple $(N, S_m, S_n)$ is regular and satisfies
the properties
\begin{enumerate}
\item[i)] $S_m$ is a neighborhood of $S_n$ in $N$ (for the relative topology),
\item[ii)] $f(S_m)\subset N$, $f(S_n) \subset S_m$,
\item[iii)] $f(\partial_N S_n)\cap S_n =\emptyset$,
\item[iv)]$S_n$ is an isolating neighborhood and $\Inv(S_n) = X$,
\end{enumerate}
introduced in Subsection \ref{subsec:connected}.

For every $n\geq 0$ and every $r\in\{0,\dots, d\}$ denote $E_{r,n}$ the subspace of $H_r(N)$
 generated by the $r$-cycles in $S_n$. We obtain $d+1$ non-increasing sequences.
 Each space $H_r(N)$ being finite-dimensional, one can find an integer $n_0$ such
  that for every $n\geq n_0$ and every $r\in\{0,\dots,d\}$, one has $E_{r,n}=E_{r,n_0}$. Replacing $L_0$ with $L_{n_0}$, and each $L_n$ with $L_{n_0+n}$, we have the following property, consequence of the minimality of the images of $H_r(S_n) \rightarrow H_r(N)$ and the following chain of inclusions:
$$S_{n+m}  \subset \bigcap_{0\leq k\leq m} f^{-k}(S_n) \subset S_{n}.$$

\begin{enumerate}
\item[v)] For every $n, m\geq 0$ and $r\in\{0,\dots,d\}$, every $r$-cycle in $S_n$ is homologous, as a cycle of $N$, to a $r$-cycle
in $\bigcap_{0\leq k\leq m} f^{-k}(S_n)$.
\end{enumerate}

Let us fix some extra notation valid for the rest of the proof:
$$
 S' = S_0, \;\; S = S_1.
$$

The endomorphisms
$$\widetilde{f}_{*,r} : H_r(S',\overline {S'\setminus S}) \rightarrow
H_r(S',\overline {S'\setminus S})$$
induced by $f$ were shown, in Proposition \ref{prop:spectrumequivalent}, to be
spectrum equivalent to the $r$-homological discrete Conley index of $X$ and $f$.
These will be the representatives required by Theorem \ref{thm:key}.

\subsection{Nilpotence}\label{subsec:nilpotence}

A first approach to the task of describing the homological Conley indices
may wonder about the homology groups of the sets $S_n$,
and, in particular, to that of $S'$. An isolating neighborhood as $S'$ that arises
from a filtration pair should be somehow similar to the invariant set $X$. Thus, for example,
it may be possible to prove that there exists a contractible $S'$ provided that
$X$ is reduced to a point. However, we have not found yet the dynamical argument, if any,
which allows to make such a simplification. The discussion which we present in this
subsection formalizes the following idea: a homology class of $S'$ which is
not represented by some chain close to the invariant set $X$, is eventually mapped onto
the zero class by the map $\widetilde{f}_{*,r}$.

The idea of nilpotence is already present in the work of Richeson and Wiseman, see \cite{richesonwiseman}.
In our context, we have to use it in a much more delicate way and need property v).

 Denote $F_r$ the image of the inclusion map
$$\iota_r: H_r(S')\to H_r(S',\overline {S'\setminus S}),$$
 that is, the subspace of
$H_r(S',\overline {S'\setminus S})$ generated by the $r$-cycles in $S'$.

\begin{proposition}\label{prop:nilpotence}
The space $F_r$ is forward invariant under $\widetilde f_{*,r}$ and included in its generalized kernel.
\end{proposition}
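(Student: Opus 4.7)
The plan is to combine the stabilization of the images $E_{r,n}=\mathrm{im}(H_r(S_n)\to H_r(N))$ (which is built into the construction and underlies property~(v)) with the acyclicity of $X$ in the form provided by Proposition~\ref{prop:acyclic}.

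For the forward invariance, take $\kappa=[\sigma]\in F_r$, with $\sigma$ an absolute $r$-cycle in $S'=S_0$. Stabilization yields an absolute cycle $\tilde\sigma$ supported in $S_2$ and an $(r{+}1)$-chain $\tau$ of $N$ with $\sigma-\tilde\sigma=\partial\tau$. Applying $f$ chain-wise gives $f(\sigma)-f(\tilde\sigma)=\partial f(\tau)$, so $[f(\sigma)]=[f(\tilde\sigma)]$ in $H_r(N,\overline{N\setminus S})$. Since $f(\tilde\sigma)\subset f(S_2)\subset S_1\subset S'$ is an absolute cycle in $S'$, it defines a class in $F_r\subset H_r(S',\overline{S'\setminus S})$ whose image under the inclusion-induced isomorphism onto $H_r(N,\overline{N\setminus S})$ is $[f(\tilde\sigma)]$. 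Applying the inverse $e_r$ yields $\widetilde f_{*,r}(\kappa)=[f(\tilde\sigma)]\in F_r$.

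For the containment in the generalized kernel, the case $r=0$ is trivial: $S'$ is connected and $\overline{S'\setminus S}$ is non-empty, so $H_0(S',\overline{S'\setminus S})=0$ and $F_0=\{0\}$. For $r\ge 1$, I would show by induction on $k$ that $\widetilde f_{*,r}^{\,k}(\kappa)$ admits a representative which is an absolute cycle in $S_k$. The base case is the given $\sigma\in S_0$; for the inductive step, apply the forward-invariance construction to a representative $\sigma_k\in S_k$: stabilization provides a homologous absolute cycle $\tilde\sigma_k$ in $S_{k+2}$, and the same excision argument identifies $\widetilde f_{*,r}^{\,k+1}(\kappa)$ with the class of $f(\tilde\sigma_k)\subset S_{k+1}$. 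Acyclicity of $X$, through Proposition~\ref{prop:acyclic} applied to the neighborhood basis $\{S_n\}$ of $X$, then provides an index $n_0$ such that $H_r(S_{n_0})\to H_r(S')$ vanishes; the representative of $\widetilde f_{*,r}^{\,n_0}(\kappa)$, being an absolute cycle in $S_{n_0}$, therefore bounds in $S'$, so $\widetilde f_{*,r}^{\,n_0}(\kappa)=0$.

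The main technical point is the interpretation of stabilization as the statement that every cycle $\sigma\subset S_n$ is homologous \emph{in $N$} to an absolute cycle in $S_{n+m}$ for any $m\ge 0$, whereas property~(v) as literally stated only produces a cycle in the potentially larger set $\bigcap_{0\le k\le m}f^{-k}(S_n)$. This stronger form is a direct consequence of the equality $E_{r,n}=E_{r,n+m}$ of minimal images that is established in the construction. The remaining point to monitor is that $\{S_n\}$ must be a neighborhood basis of $X$ for Proposition~\ref{prop:acyclic} to apply; if the inductive construction of $L_n$ does not ensure this directly, the $L_n$ can be refined so as to shrink the $S_n$ without disturbing stabilization by a further reindexing.
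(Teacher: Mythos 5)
There are two separate issues here, one cosmetic and one substantive.

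The cosmetic issue is in the forward-invariance argument. You write that $\sigma-\tilde\sigma=\partial\tau$ with $\tau\subset N$, apply $f$ chain-wise, and claim $f(\sigma)-f(\tilde\sigma)=\partial f(\tau)$ witnesses $[f(\sigma)]=[f(\tilde\sigma)]$ in $H_r(N,\overline{N\setminus S})$. But $f(\tau)$ is a chain in $f(N)$, which need not lie in $N$ (indeed $f(N)\not\subset N$ for an isolating block), so $f(\tau)$ cannot serve as a relative chain of $(N,\overline{N\setminus S})$. The right move is not to push $\tau$ forward: use the homology $\sigma-\tilde\sigma=\partial\tau$ in $N$ together with excision to conclude $[\sigma]=[\tilde\sigma]$ already in $H_r(S',\overline{S'\setminus S})$, and only then apply functoriality of $\widetilde f_{*,r}$ to the replacement cycle $\tilde\sigma$. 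This gets you to the same conclusion $\widetilde f_{*,r}(\kappa)=[f(\tilde\sigma)]\in F_r$ and is in fact closer to what the paper does (the paper takes $\tilde\sigma$ in $S'\cap f^{-1}(S')$ directly via property v)).

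The substantive gap is the acyclicity step. Your argument hinges on $\{S_n\}$ being a neighbourhood basis of $X$, so that Proposition~\ref{prop:acyclic} gives some $n_0$ with $H_r(S_{n_0})\to H_r(S')$ trivial. But the $S_n$ do not shrink to $X$: the construction forces $f(\overline{N\setminus L_{n+1}})\subset N\setminus L_n$, hence $\overline{N\setminus L_n}\subset\bigcap_{0\le k\le n}f^{-k}(\overline{N\setminus L_0})$, and the intersection $\bigcap_n S_n$ is (the component containing $X$ of) a subset of the stable set $\Lambda^+$, not $X$. This is not an artefact of loose choices of $L_n$: any point $y$ of $\Lambda^+\setminus X$ near $X$ cannot be absorbed into $L_n$, since $y\in\int_N(L_n)$ would force, by the filtration-pair condition $f(L_n)\cap\overline{N\setminus L_n}=\emptyset$, the whole forward orbit of $y$ to stay in $\int_N(L_n)$, whereas that orbit accumulates on $X\subset\int(\overline{N\setminus L_n})$. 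So ``refine the $L_n$ and reindex'' cannot fix this; the $S_n$ cannot be squeezed below $\Lambda^+$. The paper circumvents this by pushing the representative cycle \emph{along the dynamics} rather than along the filtration: choose $V\subset U\subset\int(S)$ with $H_r(V)\to H_r(U)$ trivial (here Proposition~\ref{prop:acyclic} is applied to genuine neighbourhoods of $X$), pick $n_0$ with $\bigcap_{|k|\le n_0}f^{-k}(S')\subset V$ using $\bigcap_{k\in\mathds{Z}}f^{-k}(S')=X$, and use property v) to represent $\kappa\in F_r$ by a cycle $\sigma\subset\bigcap_{k=0}^{2n_0}f^{-k}(S')$; then $f^{n_0}(\sigma)\subset V$ bounds in $U\subset S'$, giving $\widetilde f_{*,r}^{n_0}(\kappa)=0$. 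That device — tracking the cycle through $\bigcap_{|k|\le n_0}f^{-k}(S')$ rather than through $S_{n_0}$ — is exactly what your proof is missing and cannot be reproduced within your inductive scheme.
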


\begin{proof} Let us begin by proving the invariance of $F_r$. By property v) every $r$-cycle in $S'$ is homologous, as a cycle of $N$,
 to a $r$-cycle of $S' \cap f^{-1}(S')$, so it is homologous to such a
 cycle as a relative cycle of $(N,\overline{N \setminus S})$ and, by excision,
  as a relative cycle of $(S',\overline {S'\setminus S})$.
In particular, every homology class in $F_r$ is represented by a $r$-cycle $\sigma$ in
$S' \cap f^{-1}(S')$ and its image $\widetilde f_{*,r}$ is represented by $f(\sigma)$ which is a $r$-cycle in $S'$.
This means that $F_r$ is forward invariant under $\widetilde f_{*,r}$.
 Let us prove now that $F_r$ is included in the generalized kernel of $\widetilde f_{*,r}$.
Since $X$ is acyclic, we can find neighborhoods $V \subset U$ of $X$ contained in $\int(S)$
 such that the inclusion-induced map $H_r(V) \to H_r(U)$ is trivial.
 The set $\bigcap_{k\in\mathds{Z}} f^{-k}(S')$ being reduced to $X$,
 there exists $n_0\geq 0$ such that
$$\bigcap_{\vert k\vert\leq n_0} f^{-k}(S')\subset V.$$
By using again property v) one knows that every class $\kappa\in F_r$ is represented by a $r$-cycle $\sigma$ in $\bigcap_{k = 0}^{2n_0} f^{-k}(S')$.
This implies that $\widetilde f_{*,r}^{n_0}(\kappa)$ is represented by $f^{n_0}(\sigma)$, which is a cycle in $V$, hence a boundary in $U$.
One deduces that $\widetilde f_{*,r}^{n_0}(\kappa)=0$.
\end{proof}

\subsection{Definition of $\varphi$, description of $\widetilde{f}_{*,1}$}\label{subsec:proof1}

Property v) applied to $r = 1$ gives information about the display of the subsets $S$ and $S'$ of $N$, which
the next proposition will illustrate.

\begin{lemma}\label{lem:uniquecc}
Let $(M, T', T)$ be a regular triple such that all three sets are connected,
\begin{itemize}
\item $T \cap (\overline{M \setminus T'}) = \emptyset$ and
\item the images of the inclusion-induced maps $H_1(T) \rightarrow H_1(M)$ and $H_1(T') \rightarrow H_1(M)$ are equal.
\end{itemize}
Then:
\begin{itemize}
\item Given any $c \in \pi_0(\overline{T' \setminus T})$, the set $c\cap T=c \cap \partial_M T$ is connected.
\item If $\Delta \subset T$ is closed and $\Delta \cap \partial_M T = \emptyset$, there is a bijection
$$\lambda : \pi_0(T' \setminus \Delta) \rightarrow \pi_0(T \setminus \Delta)$$
defined by $\lambda(c) \subset c$ for any $c \in \pi_0(T' \setminus \Delta)$.
\end{itemize}
\end{lemma}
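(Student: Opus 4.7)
The plan is to handle both items via Mayer--Vietoris for the decomposition $T'=T\cup\overline{T'\setminus T}$, with the $H_1$-hypothesis entering through naturality against the Mayer--Vietoris sequence of $M=T\cup\overline{M\setminus T}$.

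I would begin by recording two preliminary facts. The assumption $T\cap\overline{M\setminus T'}=\emptyset$ forces $T\subset\mathrm{int}_M T'$, from which one deduces the identity $T\cap\overline{T'\setminus T}=\partial_M T$: any neighborhood of a point of $\partial_M T$ lies in $T'$ and meets $M\setminus T$, hence meets $T'\setminus T$. In particular, for every component $c$ of $\overline{T'\setminus T}$ one has $c\cap T=c\cap\partial_M T$, and the two pieces of the decomposition intersect exactly along $\partial_M T$. Secondly, every component $c$ of $\overline{T'\setminus T}$ must meet $\partial_M T$: otherwise $c$ would lie in the open, locally connected set $T'\setminus T$, so $c$ would coincide with a connected component of $T'\setminus T$ and hence be open in $T'$; being also closed, it would exhaust the connected space $T'$, contradicting $T\neq\emptyset$.

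For the first item I would consider the Mayer--Vietoris sequence
\begin{equation*}
H_1(T)\oplus H_1(\overline{T'\setminus T})\xrightarrow{\beta}H_1(T')\xrightarrow{\partial}H_0(\partial_M T)\xrightarrow{\gamma}H_0(T)\oplus H_0(\overline{T'\setminus T})\to H_0(T')\to 0
\end{equation*}
together with the analogous sequence for $M=T\cup\overline{M\setminus T}$, whose boundary map I denote $\partial'\colon H_1(M)\to H_0(\partial_M T)$. Naturality along the inclusion $j\colon T'\hookrightarrow M$ gives $\partial=\partial'\circ j_*$. The hypothesis on images in $H_1(M)$ translates into $H_1(T')=\mathrm{im}(H_1(T)\to H_1(T'))+\ker j_*$, and both summands are killed by $\partial$: the first by exactness of the Mayer--Vietoris sequence of $T'$, and the second by the naturality identity. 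Consequently $\partial=0$ and $\ker\gamma=0$. Using that $T$ is connected and that the assignment $e\mapsto c(e)$ sending a component of $\partial_M T$ to the component of $\overline{T'\setminus T}$ containing it is surjective (second preliminary fact), a short linear-algebra computation yields $\dim\ker\gamma=\#\pi_0(\partial_M T)-\#\pi_0(\overline{T'\setminus T})$. Hence $\#\pi_0(\partial_M T)=\#\pi_0(\overline{T'\setminus T})$, the surjection $e\mapsto c(e)$ is bijective, and this is exactly the statement that each $c$ meets $\partial_M T$ in a single component, hence in a connected set.

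For the second item I would write $T'\setminus\Delta=(T\setminus\Delta)\cup\overline{T'\setminus T}$, the intersection of the two pieces being $\partial_M T$, which lies in $T\setminus\Delta$ because $\Delta\cap\partial_M T=\emptyset$. The first item implies that each $c\in\pi_0(\overline{T'\setminus T})$ meets $T\setminus\Delta$ in the connected set $c\cap\partial_M T$, which sits inside a unique component of $T\setminus\Delta$. Encoding these intersections as a bipartite graph with vertex set $\pi_0(T\setminus\Delta)\sqcup\pi_0(\overline{T'\setminus T})$, one obtains a disjoint union of stars centered at the components of $T\setminus\Delta$; its connected components biject with $\pi_0(T\setminus\Delta)$. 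Since the connected components of $T'\setminus\Delta$ are precisely the unions of vertices in each star, the map $\lambda$ assigning to a component of $T'\setminus\Delta$ the unique component of $T\setminus\Delta$ it contains is the desired bijection.

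The main obstacle is the vanishing of $\partial$: the hypothesis is formulated in $H_1(M)$, not in $H_1(T')$, and so cannot be plugged directly into the Mayer--Vietoris sequence of $T'$. Routing through the naturality diagram with the Mayer--Vietoris sequence of $M$ is the only point where the $H_1$-assumption enters, and it is what turns an image equality in $H_1(M)$ into the precise combinatorial attachment property needed for both items.
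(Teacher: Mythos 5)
Your argument is correct, and for the first item it follows a genuinely different route from the paper. For $c\cap\partial_M T$ being connected, the paper picks a component $e$ of $\partial_M T$, uses Lefschetz duality on $M$ to turn the class $[e]\in H_{d-1}(M,\partial M)$ into a class in $H^1(M)$, observes that this class annihilates the image of $H_1(T)$ (hence of $H_1(T')$, by hypothesis), and then, assuming $c\cap\partial_M T$ had two components, exhibits a loop $\gamma$ in $T'$ meeting $e$ transversally in one point, contradicting the vanishing of the pairing. Your proof replaces this duality-plus-transversality argument with the Mayer--Vietoris boundary map $\partial\colon H_1(T')\to H_0(\partial_M T)$ for the closed cover $T'=T\cup\overline{T'\setminus T}$: the hypothesis rewritten as $H_1(T')=\mathrm{im}(H_1(T)\to H_1(T'))+\ker j_*$, together with naturality against the Mayer--Vietoris sequence of $M=T\cup\overline{M\setminus T}$, forces $\partial=0$; injectivity of $\gamma$ then combines with the rank computation $\dim\ker\gamma=\#\pi_0(\partial_M T)-\#\pi_0(\overline{T'\setminus T})$ (valid by surjectivity of the attachment map $e\mapsto c(e)$) to give the bijection. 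What your approach buys is that it avoids Lefschetz duality, orientation, and the transversal-loop construction entirely, trading them for an algebraic count in $H_0$; what it costs is that you lean on the naturality of the Mayer--Vietoris connecting map for a map of closed covers and on the excisiveness of these covers, both of which are available here because the triple is assumed regular but should be mentioned. Your argument for the second item, via the bipartite incidence graph being a disjoint union of stars, is essentially the paper's argument (the paper phrases it through the inverse map $\mu$ from $\pi_0(T\setminus\Delta)$), differing only in packaging; the needed fact that the stars exhaust the components of $T'\setminus\Delta$ follows from local connectedness together with the disjointness and connectedness of the unions $C\cup\bigcup_{c\cap\partial_M T\subset C}c$, and deserves a one-line justification.
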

\begin{proof}
Let us begin by proving the first point. Every connected component of $\partial_M T$ is a
$(d-1)$-manifold with boundary, whose boundary belongs to $\partial M$. It defines a homology class
$\kappa \in H_{d-1}(M, \partial M)$ and, by Lefschetz duality, a cohomology class in $H^1(M)$,
defined by intersecting $\kappa$ with homology classes in $H_1(M)$. This cohomology class vanishes on the
image of $H_1(T)$ in $H_1(M)$, so it vanishes on the image of $H_1(T')$ in $H_1(M)$, by hypothesis.
Every connected component $c$ of $\overline {T'\setminus T}$ meets $\partial_M T$ because $T'$ is connected
and one knows that $c\cap \partial_M T=c \cap T$. More precisely, $c\cap \partial_M T$
is the finite union of connected components of $\partial_M T$ contained in $c$.
The manifold $T$ being connected, if there is more than one component, one can find a loop $\gamma$
in $T'$ that intersects a given component of $\partial_M T$ in a unique point and transversally.
The cohomology class defined by this component does not vanish on the homology class of $\gamma$.

Let us prove now the second point. Denote $\mu: \pi_0(T \setminus \Delta)\to\pi_0(T '\setminus \Delta)$
 the map that assigns to every connected component $C$ of $T \setminus \Delta$ the connected component of
 $T '\setminus \Delta$ that contains $C$. The first point tells us that $\mu(C)$ is the union of $C$ and
of the connected components $c$ of $\overline{T'\setminus T}$ such that the non-empty connected set
$c\cap\partial_M T$ is included in $C$. As a consequence, one deduces that $\mu$ is one-to-one.
The fact that $\mu$ is onto is an immediate consequence of the connectedness of $T'$. One has $\lambda =\mu^{-1}$.

\end{proof}

The previous lemma permits us to define a map
$$\varphi: \pi_0(\overline{S' \setminus S})\to \pi_0(\overline{S' \setminus S})$$ as follows. Take a component $c \in \pi_0(\overline{S' \setminus S})$. One knows that
$c \cap S = c \cap \partial_N S$ is connected. Since $f(S)\subset S'$ and $f(\partial_{N} S) \cap S = \emptyset$, one deduces that
$f(c \cap \partial_N S)$ is contained in a connected component $\varphi(c) \in \pi_0(\overline{S'\setminus S})$.

\begin{proposition}
The spectrum equivalence class of $\textit{h}_1(f, X)$
 is represented by the reduced permutation endomorphism defined by $\varphi$.
\end{proposition}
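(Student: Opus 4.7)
The plan is to identify $\widetilde{f}_{*,1}$, up to shift equivalence, with the reduced permutation endomorphism on $\widetilde H_0(\overline{S'\setminus S})$ defined by $\varphi$, using the long exact sequence of the pair $(S',\overline{S'\setminus S})$ as a bridge.

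First, I will use the long exact sequence of $(S',\overline{S'\setminus S})$. Since $S'$ is connected, the augmentation $H_0(\overline{S'\setminus S})\to H_0(S')=\mathds{Q}$ has kernel $\widetilde H_0(\overline{S'\setminus S})$; by exactness, the connecting homomorphism $\partial$ has this as its image and $F_1:=\iota_1(H_1(S'))$ as its kernel. Hence $\partial$ induces an isomorphism
$$ \Phi : H_1(S',\overline{S'\setminus S})/F_1 \xrightarrow{\;\cong\;} \widetilde H_0(\overline{S'\setminus S}).$$
Proposition \ref{prop:nilpotence} tells me that $F_1$ is forward invariant under $\widetilde f_{*,1}$ and contained in its generalized kernel. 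Applying Lemma \ref{lem:gimgker} with $G$ equal to the full space $H_1(S',\overline{S'\setminus S})$ and $F=F_1$, the induced endomorphism on $G/F$ will be shift equivalent, hence spectrum equivalent, to $\widetilde f_{*,1}$. Transporting it through $\Phi$, I obtain an endomorphism $\tilde h$ of $\widetilde H_0(\overline{S'\setminus S})$ in the same spectrum equivalence class.

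The heart of the argument will be the computation of $\tilde h$ on the spanning set $\{[c_+]-[c_-]\}_{c_\pm\in\pi_0(\overline{S'\setminus S})}$. For two such components, I will choose a simple arc $\sigma\subset S'$ from $p_-\in c_-$ to $p_+\in c_+$, taking $\sigma$ piecewise linear and transverse to $\partial_N S$, with $p_\pm\notin\partial_N S$. Then $\Phi([\sigma])=[c_+]-[c_-]$, and by Lemma \ref{lem:nestedcycles} the class $\widetilde f_{*,1}([\sigma])$ is represented by the relative cycle $f(\sigma\cap S)$. The intersection $\sigma\cap S$ decomposes into finitely many successive subarcs $\beta_1,\dots,\beta_m$, each with entry/exit points $q_j^-,q_j^+\in\partial_N S$. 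By Lemma \ref{lem:uniquecc}, each $q_j^\pm$ lies in a uniquely determined component $c_j^\pm$ of $\overline{S'\setminus S}$; tracking $\sigma$ between consecutive subarcs in $S$ yields $c_1^-=c_-$, $c_m^+=c_+$ and $c_j^+=c_{j+1}^-$ for $1\le j<m$. By the very definition of $\varphi$, $f(q_j^\pm)\in\varphi(c_j^\pm)$, so a telescoping computation gives
$$ \Phi(\widetilde f_{*,1}([\sigma])) = \bigl[\partial f(\sigma\cap S)\bigr] = \sum_{j=1}^m \bigl([\varphi(c_j^+)]-[\varphi(c_j^-)]\bigr) = [\varphi(c_+)]-[\varphi(c_-)].$$
Hence $\tilde h([c_+]-[c_-])=[\varphi(c_+)]-[\varphi(c_-)]$, which is exactly the action on $\widetilde H_0$ of the reduced permutation endomorphism attached to $\varphi$: the ambient permutation endomorphism on $H_0(\overline{S'\setminus S})$ sends $[c]\mapsto[\varphi(c)]$, preserves $\widetilde H_0=\ker\delta$, and its restriction coincides with $\tilde h$.

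The main difficulty will be the clean handling of the decomposition $\sigma\cap S=\beta_1\sqcup\dots\sqcup\beta_m$ and the ensuing telescoping at the level of chains. This is why I will take $\sigma$ piecewise linear and transverse to $\partial_N S$, ensuring finitely many crossings and making each entry/exit point unambiguously attached to one component of $\overline{S'\setminus S}$ via Lemma \ref{lem:uniquecc}. Since the differences $[c_+]-[c_-]$ span $\widetilde H_0(\overline{S'\setminus S})$, the identification of $\tilde h$ with the reduced permutation endomorphism defined by $\varphi$ will then be complete.
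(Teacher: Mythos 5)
Your proof is correct and follows essentially the same route as the paper's: pass to the quotient $H_1(S',\overline{S'\setminus S})/F_1$ via Lemma~\ref{lem:gimgker} and Proposition~\ref{prop:nilpotence}, identify this quotient with $\ker(j_*)=\widetilde H_0(\overline{S'\setminus S})$ via the connecting homomorphism of the long exact sequence, and recognize the induced map as the reduced permutation endomorphism attached to $\varphi$. The only difference is that the paper leaves the final identification as an ``Observe now\dots'', whereas you flesh it out with the arc decomposition of $\sigma\cap S$ and the telescoping sum, which is a valid and welcome elaboration (one should just be slightly careful that, since $f(S)\subset S'$, the chain $f(\sigma\cap S)$ itself already lies in $S'$ with boundary in $\overline{S'\setminus S}$, so it is an honest representative of $\widetilde f_{*,1}([\sigma])$ rather than merely containing one as Lemma~\ref{lem:nestedcycles} literally asserts).
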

\begin{proof}

By Lemma \ref{lem:gimgker} and Proposition \ref{prop:nilpotence}, one knows that the induced endomorphism
 $$\check{f}_{*,1} : H_1(S',\overline {S'\setminus S})/F_1 \rightarrow
H_1(S',\overline {S'\setminus S})/F_1$$
is shift equivalent to $\widetilde{f}_{*,1}$ and therefore represents
the spectrum equivalence class of the first-homological Conley index of $X$ and $f$.

From the long exact sequence of homology groups of the pair $(S', \overline{S' \setminus S})$,
the homology group $H_1(S',\overline {S'\setminus S})/F_1$ is isomorphic to $\im(\partial_1) =  \ker(j_*)$,
 where $$\partial_1 : H_1(S',\overline {S'\setminus S}) \rightarrow H_0(\overline {S' \setminus S})$$
is the connecting map and
$$j_* : H_0(\overline{S' \setminus S})
\to H_0(S')$$
the inclusion-induced map.

Every connected component $c$ of $\overline{S' \setminus S}$ is associated to an element $[c]$ of
the homology group $H_0(\overline{S' \setminus S})$, and the set $\{[c]\}_{c \in \pi_0(\overline{S' \setminus S})}$
is a basis. To the map $\varphi$ is naturally associated a permutation endomorphism $u$ on $H_0(\overline{S' \setminus S})$. Taking $[S']$ as a basis of $H_0(S')$, one has the natural identification $H_0(S') \sim \mathds{Q}$. The map $j_*$ sends every $[c]$ onto $1$. Observe now that the isomorphism $H_1(S',\overline {S'\setminus S})/F_1\to \ker(j_*)$ conjugates $\check{f}_{*,1}$ to the restriction of $u$ to $\ker(j_*)$ which is nothing but the reduced permutation endomorphism defined by $\varphi$.
\end{proof}

The previous proposition proves the first item of Theorem \ref{thm:key}.

\subsection{Introducing the stable set}

Recall that the stable set of $f$ in $S$ is $\Lambda^+=\bigcap_{k\geq 0} f^{-k}(S)$.
Lemma \ref{lem:uniquecc} implies the following:
 \begin{itemize}
\item every connected component of $S'\setminus\Lambda^+$ that meets a connected component of $\overline{S'\setminus S}$  contains this component.
\end{itemize}
Notice that every connected component of $S'\setminus\Lambda^+$ contains a unique
connected component of $S \setminus \Lambda^+$.
Denote $ {\cal C}=\pi_0(S'\setminus \Lambda^+) $ the set of connected components of
$S'\setminus \Lambda^+$ and ${\cal C}^*$ the set of components $C'\in{\cal C}$ that meet
 $\overline{S'\setminus S}$. Like in Subsection \ref{subsec:proof1}, one can define a map
$$\Psi:{\cal C}\to {\cal C}$$ as follows.
Consider a connected component $C'\in{\cal C}$ and denote $C$ the unique
connected component of $S \setminus \Lambda^+$ contained in $C'$.
Every point in $S$ whose image by $f$ belongs to $\Lambda^+$ must be contained in $\Lambda^+$.
One deduces that $f(C)$ is included in $S' \setminus \Lambda^+$, hence contained in a unique connected component $\Psi(C') \in \mathcal{C}$.
Should $C'$ meet $\overline{S' \setminus S}$, then $C'$ would meet
$\partial_N S$ so, by definition, $\Psi(C')$
would also meet $\overline{S' \setminus S}$ because $f(\partial_N S) \subset \overline{S' \setminus S}$.
Therefore, $\mathcal{C}^*$ is forward invariant.
Observe that, for every $C'\in{\cal C}$, there exists $n\geq 0$ such that
\begin{itemize}
\item  $f^n(C')\cap (\overline{S'\setminus S})\not=\emptyset$;
\item $f^k(C')\cap (\overline{S'\setminus S})=\emptyset$, if $k<n$.
 \end{itemize}
In particular, $\Psi^k(C')\in{\cal C}^*$ for every $k\geq n$.

\begin{proposition}\label{prop:gimmeetstable}
Suppose that for some $c, c' \in \pi_0(\overline{S' \setminus S})$ and any $n \ge 0$,
$\varphi^n(c) \neq \varphi^n(c')$. Then, every path in $S'$ which joins $c$ and $c'$ meets $\Lambda^+$.
\end{proposition}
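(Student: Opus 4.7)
The plan is to prove the contrapositive: if $\gamma:[0,1]\to S'$ is a path with $\gamma(0)\in c$, $\gamma(1)\in c'$ and $\gamma\cap\Lambda^+=\emptyset$, then $\varphi^n(c)=\varphi^n(c')$ for some $n\geq 0$. The idea is to view $\gamma$ as a relative $1$-cycle, iterate it under $\widetilde f_{*,1}$ via Lemma \ref{lem:nestedcycles} so that its support is squeezed onto $\Lambda^+$, and conclude by a compactness argument.

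Since $c,c'\subset\overline{S'\setminus S}$, the path $\gamma$ is a relative $1$-cycle of $(S',\overline{S'\setminus S})$ and defines a class $\kappa\in H_1(S',\overline{S'\setminus S})$. The connecting homomorphism $\partial_1$ sends $\kappa$ to $[c']-[c]\in\ker(j_*)\subset H_0(\overline{S'\setminus S})$. From the identification worked out in Subsection \ref{subsec:proof1}, $\partial_1$ is compatible with $\widetilde f_{*,1}$ modulo $F_1$ and the induced map on $\ker(j_*)$ is precisely the reduced permutation endomorphism $[c]\mapsto[\varphi(c)]$; hence for every $n\geq 0$,
$$\partial_1\bigl(\widetilde f_{*,1}^n(\kappa)\bigr)=[\varphi^n(c')]-[\varphi^n(c)].$$

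Now apply Lemma \ref{lem:nestedcycles} repeatedly. Starting from $\sigma_0=\gamma$, we obtain inductively singular chains $\sigma_n$ representing $\widetilde f_{*,1}^n(\kappa)$ with $\sigma_n\subset f(\sigma_{n-1}\cap S)$. Writing $\Lambda^+_m:=\bigcap_{k=0}^m f^{-k}(S)$, the elementary identity $f(A)\cap S=f(A\cap f^{-1}(S))$ yields, by induction on $n$,
$$\sigma_n\subset f^n\bigl(\gamma\cap\Lambda^+_{n-1}\bigr).$$

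The sequence $(\Lambda^+_m)_{m\geq 0}$ is a decreasing family of compacts with intersection $\Lambda^+$, so the compact sets $\gamma\cap\Lambda^+_m$ decrease to $\gamma\cap\Lambda^+=\emptyset$; by the finite intersection property, $\gamma\cap\Lambda^+_m=\emptyset$ for some $m\geq 0$. Taking $n=m+1$, the inclusion forces $\sigma_n=0$ as a singular chain, so $\widetilde f_{*,1}^n(\kappa)=0$. Applying $\partial_1$ gives $[\varphi^n(c')]-[\varphi^n(c)]=0$ in $H_0(\overline{S'\setminus S})$, which forces $\varphi^n(c)=\varphi^n(c')$ since the classes $\{[c]\}_{c\in\pi_0(\overline{S'\setminus S})}$ form a basis. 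The only delicate step is the bookkeeping in the inductive inclusion for $\sigma_n$; everything else amounts to reading off the long exact sequence and a routine compactness argument.
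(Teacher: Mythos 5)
Your proof is correct and takes essentially the same approach as the paper: both rely on Lemma~\ref{lem:nestedcycles} to propagate the relative cycle forward, use the identification $\partial_1\circ\widetilde f_{*,1}^n(\kappa)=[\varphi^n(c')]-[\varphi^n(c)]$, and finish with a compactness argument involving the sets $\bigcap_{k=0}^m f^{-k}(S)$. The only difference is presentational---you run the contrapositive (empty $\gamma\cap\Lambda^+$ eventually forces the chain $\sigma_n$ to vanish), while the paper argues directly ($\varphi^n(c)\neq\varphi^n(c')$ keeps each $\gamma_n\cap S$ nonempty, so $\{f^{-n}(\gamma_n\cap S)\}_{n\geq 0}$ is a decreasing sequence of nonempty compacts whose intersection lies in $\gamma\cap\Lambda^+$).
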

\begin{proof}
 Let
$\gamma_0$ be a path in $S'$ which joins $c$ and $c'$. As a 1-cycle in $(S', \overline{S' \setminus S})$, it represents a relative homology class
$[\gamma_0] \in H_1(S', \overline{S' \setminus S})$. With the notations introduced in Subsection \ref{subsec:proof1}, one knows that
$\widetilde{f}_{*,1}^n ([\gamma_0])\not=0$ for every $n\geq 1$, because
$$\partial_1(\widetilde{f}_{*,1}^n ([\gamma_0])) = [\varphi^n(c)] - [\varphi^n(c')]\not=0.$$
\noindent
By Lemma \ref{lem:nestedcycles}, one can construct a sequence of relative 1-cycles $\{\gamma_n\}_{n \ge 0}$ of $(S',\overline{S'\setminus S})$
such that
\begin{itemize}
\item $\gamma_n$ represents the class $\widetilde{f}_{*,1}^n ([\gamma_0])$ and
\item $\gamma_{n+1} \subset f(\gamma_n \cap S)$, for every $n \ge 0$.
\end{itemize}
The sequence $\{f^{-n}(\gamma_n\cap S)\}_{n\geq 0}$ is a decreasing sequence of non-empty compact subsets of $\gamma_0$. Therefore the set $\bigcap_{n \ge 0} f^{-n}(\gamma_n \cap S)$ is not empty, included both in $\gamma_0$ and in $\bigcap_{n \ge 0} f^{-n}(S)=\Lambda_+$.
\end{proof}

Since every $C \in \mathcal{C}$ is pathwise connected, the previous proposition shows that any two
components of $\overline{S'\setminus S}$ that are contained in the same component $C \in \mathcal{C}^*$
have equal image under $\varphi^n$ for sufficiently large $n$.

\begin{corollary}\label{cor:varphiPsi}
The maps $\varphi$ and $\psi=\Psi\vert_{{\cal C}^*}$ are shift equivalent.
\end{corollary}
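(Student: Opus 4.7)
The strategy is to reduce to Proposition \ref{prop:lerayreductionfinite}: shift equivalence of $\varphi$ and $\psi$ is the same as conjugacy of their induced permutations $\mathcal{L}(\varphi)$ and $\mathcal{L}(\psi)$. I will construct a natural semi-conjugacy $a : \pi_0(\overline{S' \setminus S}) \to \mathcal{C}^*$ and show that its restriction to $\gim(\varphi)$ yields the required conjugacy.

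First, I would define $a(c)$ to be the unique element of $\mathcal{C}$ that contains $c$. To see that this is well-defined and actually lands in $\mathcal{C}^*$, I would use property iii) of the filtration: $f(\partial_N S) \cap S = \emptyset$ forces $\Lambda^+ \cap \partial_N S = \emptyset$, since any point of $\Lambda^+$ must have its image in $S$. Because $\overline{S' \setminus S} \cap S \subset \partial_N S$, every component $c \in \pi_0(\overline{S'\setminus S})$ is a connected subset of $S' \setminus \Lambda^+$, hence is contained in a unique element $a(c) \in \mathcal{C}$, and that element meets $\overline{S' \setminus S}$ by construction.

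Next, I would verify the semi-conjugacy $\psi \circ a = a \circ \varphi$. Given $c$, let $C' = a(c)$ and let $C$ be the unique component of $S \setminus \Lambda^+$ inside $C'$. The connected nonempty set $c \cap \partial_N S$ provided by Lemma \ref{lem:uniquecc} lies in $S \setminus \Lambda^+$ and in $C'$, hence inside $C$. Pushing forward by $f$ places $f(c \cap \partial_N S)$ simultaneously inside $\varphi(c)$ (by definition of $\varphi$) and inside $\Psi(C')$ (by definition of $\Psi$, together with $f(C) \subset S' \setminus \Lambda^+$). Connectedness of $\varphi(c) \subset S' \setminus \Lambda^+$ then forces $\varphi(c) \subset \Psi(C')$, yielding $a(\varphi(c)) = \Psi(a(c)) = \psi(a(c))$.

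The last and most delicate step is to promote this semi-conjugacy to a bijection $a_* : \gim(\varphi) \to \gim(\psi)$. The inclusion $a(\gim(\varphi)) \subset \gim(\psi)$ follows from the semi-conjugacy. For surjectivity, any $C' \in \gim(\psi)$ is $p$-periodic under $\psi$; picking any component $c_0 \subset C'$ of $\overline{S' \setminus S}$ gives $a(c_0) = C'$, and the forward $\varphi$-orbit of $c_0$ is eventually periodic, so $c := \varphi^n(c_0) \in \gim(\varphi)$ satisfies $a(c) = \psi^n(C') = C'$ whenever $n$ is large and divisible by $p$. Injectivity is where I expect the main obstacle to lie, and it is exactly here that the consequence of Proposition \ref{prop:gimmeetstable} stated just above the Corollary is used: if $c, c' \in \gim(\varphi)$ satisfy $a(c) = a(c')$, then both sit in the same element of $\mathcal{C}^*$, so $\varphi^n(c) = \varphi^n(c')$ for $n$ large; since $c$ and $c'$ are $\varphi$-periodic, choosing such an $n$ also divisible by their periods forces $c = \varphi^n(c) = \varphi^n(c') = c'$. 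The bijection $a_*$ intertwines $\mathcal{L}(\varphi)$ and $\mathcal{L}(\psi)$, and Proposition \ref{prop:lerayreductionfinite} finishes the argument.
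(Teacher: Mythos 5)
Your proof is correct and follows essentially the same route as the paper: you construct the map $a : \pi_0(\overline{S' \setminus S}) \to \mathcal{C}^*$ induced by inclusion, verify the semiconjugacy $\psi \circ a = a \circ \varphi$ via the connected set $f(c \cap \partial_N S)$, and then show $a$ restricts to a bijection between $\gim(\varphi)$ and $\gim(\psi)$, using Proposition \ref{prop:gimmeetstable} exactly as the paper does for injectivity. You supply a few details the paper leaves implicit (the disjointness $\overline{S'\setminus S} \cap \Lambda^+ = \emptyset$ that makes $a$ well-defined, and a slightly more hands-on surjectivity argument), but the underlying structure and the invocation of Proposition \ref{prop:lerayreductionfinite} are the same.
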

\begin{proof}
The inclusion $\Lambda^+ \subset S$ induces a map $a : \pi_0(\overline{S' \setminus S}) \rightarrow {\cal C}^*$ which is onto and
semiconjugates $\varphi$ and $\Psi$. This last property can be deduced immediately from the following fact: for every component $c\in\pi_0(\overline{S'\setminus S})$, the non-empty connected set $f(c\cap \partial_N S)$ belongs both to $\varphi(c)$ and to $\psi(a(c))$. As seen in Section \ref{sec:preliminaries},
 there exists an integer $n_0$ such than the restriction of $\varphi$ to $\im (\varphi^{n_0}) $
 is the permutation induced by $\varphi$. The map $a$ being onto, its restriction to $\im (\varphi^{n_0})$
sends $\im (\varphi^{n_0})$  onto $\im (\psi^{n_0})$. Proposition \ref{prop:gimmeetstable} tells us that the restriction of $a$ to
 $\im (\varphi^{n_0})$ is one-to-one, which implies that this restriction conjugates
$\varphi^{n_0}\vert_{\im (\varphi^{n_0})}$ to $\psi^{n_0}\vert_{\im (\psi^{n_0})}$.
 Consequently, the restriction of $\psi$ to $\im (\psi^{n_0})$ is bijective.
This implies that it is the permutation induced by $\psi$.
The permutation induced by $\varphi$ and $\psi$ being conjugate, $\varphi$ and $\psi$ are shift equivalent.
\end{proof}

This corollary proves the third item of Theorem \ref{thm:key}.

\subsection{Gathering all dynamical information around the unstable set}

Given a neighborhood $U$ of $X$ contained in the interior of $S$, denote $E_r(U)$ the subspace of
$H_r(S',\overline {S'\setminus S})$ generated by the relative $r$-cycles of
$(S',\overline {S'\setminus S})$ that are included in
$(S'\setminus \Lambda^+)\cup U$. The space $H_r(S',\overline {S'\setminus S})$
 being finite-dimensional, one can choose $U$ such that for every neighborhood
 $V$ of $X$ contained in $U$ one has $E_r(V)=E_r(U)$.

\begin{proposition}\label{prop:gimunstableset}
The space $E_r(U)$ is forward invariant under $\widetilde f_{*,r}$ and contains the generalized image of
$\widetilde f_{*,r}$.
\end{proposition}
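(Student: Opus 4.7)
My plan is to prove the two assertions separately, relying crucially on Lemma \ref{lem:nestedcycles} to keep track of where successive images of representing cycles lie, on the forward invariance of $\Lambda^+$ (which gives $f(S\setminus \Lambda^+)\subset S'\setminus \Lambda^+$), and on the dynamical fact that $\bigcap_{n\geq 0} f^n(\Lambda^+)=X$, which by compactness yields $f^n(\Lambda^+)\subset U$ for all sufficiently large $n$.

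For forward invariance, let $\kappa\in E_r(U)$. Since $f(X)=X\subset \mathrm{int}(U)$ and $f$ is continuous, I can choose a neighborhood $V\subset U$ of $X$ such that $f(V)\subset U$. The defining stability property of $U$ gives $E_r(V)=E_r(U)$, so $\kappa$ is represented by a relative cycle $\sigma$ contained in $(S'\setminus \Lambda^+)\cup V$. By Lemma \ref{lem:nestedcycles}, $\widetilde{f}_{*,r}(\kappa)$ is represented by a cycle $\sigma'\subset f(\sigma\cap S)$. Now $\sigma\cap S\subset (S\setminus \Lambda^+)\cup V$, and the inclusion $f(S\setminus \Lambda^+)\subset S'\setminus \Lambda^+$ (which follows because any $x\in S$ with $f(x)\in \Lambda^+$ must itself belong to $\Lambda^+$) together with $f(V)\subset U$ yields $\sigma'\subset (S'\setminus \Lambda^+)\cup U$. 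Hence $\widetilde{f}_{*,r}(\kappa)\in E_r(U)$.

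For the generalized image inclusion, the restriction of $\widetilde{f}_{*,r}$ to $\gim(\widetilde{f}_{*,r})$ is an automorphism, so given $\kappa\in\gim(\widetilde{f}_{*,r})$ and any $n\geq 0$ there exists $\kappa_n\in H_r(S',\overline{S'\setminus S})$ with $\widetilde{f}_{*,r}^n(\kappa_n)=\kappa$. Pick a representing cycle $\tau_n$ of $\kappa_n$. Iterating Lemma \ref{lem:nestedcycles}, $\kappa$ is represented by a cycle $\sigma_n$ with
$$\sigma_n\subset f^n\bigl(\tau_n\cap S\cap f^{-1}(S)\cap\cdots\cap f^{-(n-1)}(S)\bigr).$$
The key observation now is the following dichotomy for any point $y\in\sigma_n$, writing $y=f^n(x)$ with $f^k(x)\in S$ for $0\leq k\leq n-1$: either $y\notin \Lambda^+$, in which case $y\in S'\setminus \Lambda^+$, or $y\in \Lambda^+$, in which case all forward iterates of $y$ lie in $S$, forcing $x\in \Lambda^+$ and hence $y\in f^n(\Lambda^+)$.

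The main (and only nontrivial) point is to pin down an $n_0$ independent of $\kappa$ with $f^{n_0}(\Lambda^+)\subset U$. This follows from the compact-set argument: $\Lambda^+$ is compact and forward invariant, $\{f^n(\Lambda^+)\}_{n\geq 0}$ is a nested sequence of compacts whose intersection is the maximal invariant subset of $\Lambda^+$, namely $X$; so for any neighborhood $U$ of $X$ there is $n_0$ with $f^{n_0}(\Lambda^+)\subset U$, and hence $f^n(\Lambda^+)\subset U$ for all $n\geq n_0$. Picking such an $n$ in the construction above gives $\sigma_n\subset (S'\setminus \Lambda^+)\cup U$, so $\kappa\in E_r(U)$, completing the proof that $\gim(\widetilde{f}_{*,r})\subset E_r(U)$. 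The delicate point in the write-up is making sure the iterated form of Lemma \ref{lem:nestedcycles} is applied correctly so that $\sigma_n$ really sits in $f^n$ of the prescribed intersection; everything else is topology.
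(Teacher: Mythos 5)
Your proof is correct and is essentially the same as the paper's. The forward-invariance argument is virtually identical (choose $V$ with $f(V)\subset U$, use $E_r(V)=E_r(U)$, apply Lemma~\ref{lem:nestedcycles}, and observe that $f(S\setminus\Lambda^+)\cap\Lambda^+=\emptyset$). For the generalized-image inclusion, the paper organizes the iteration slightly differently: it starts from an arbitrary $\kappa$, pushes forward a single sequence $\{\sigma_n\}$ with $\sigma_{n+1}\subset f(\sigma_n\cap S)$, deduces $\sigma_n\cap S\subset\bigcap_{0\leq k\leq n}f^k(S)$, and concludes $\widetilde f_{*,r}^n(\kappa)\in E_r(U)$ once $\bigl(\bigcap_{0\leq k\leq n}f^k(S)\bigr)\cap\Lambda^+\subset U$; since the required $n$ is uniform in $\kappa$, this gives $\gim(\widetilde f_{*,r})=\im(\widetilde f_{*,r}^n)\subset E_r(U)$. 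You instead fix $\kappa\in\gim(\widetilde f_{*,r})$, pull it back $n$ steps using the automorphism property on $\gim$, and push forward, landing directly on the same containment $\sigma_n\subset(S'\setminus\Lambda^+)\cup U$ via the dichotomy $y\notin\Lambda^+$ versus $y\in f^n(\Lambda^+)\subset U$. The two versions are logically equivalent and rely on the same geometric mechanism; the paper's is marginally tidier in that it does not require choosing a new representing cycle $\tau_n$ for each $n$, but your backward-then-forward phrasing makes the membership $\kappa\in E_r(U)$ immediate without invoking the uniformity of the cutoff.
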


\begin{proof}

Fix a neighborhood $V$ of $X$ such that $U$ contains $V$ and $f(V)$.
Every homology class $\kappa\in E_r(U)$ is represented by a relative cycle $\sigma$ of $(S',\overline {S'\setminus S})$ included in
$(S'\setminus \Lambda^+)\cup V$ because $E_r(V) = E_r(U)$. The class $\widetilde f_{*,r}(\kappa)$  is represented  by a relative cycle of $(S',\overline {S'\setminus S})$ which is included in $f(\sigma \cap S)$. But such set will meet $\Lambda^+$ only in $f(V)$
(recall that $f(S\setminus \Lambda^+)\cap \Lambda^+ =\emptyset$) so it is included in  $(S'\setminus \Lambda^+)\cup U$.
One deduces that $E_r(U)$ is forward invariant.

Let us conclude by proving that it contains the generalized image of $\widetilde f_{*,r}$.
Fix $\kappa\in H_r(S', \overline {S'\setminus S})$ and $\sigma_0$ a relative cycle in
$(S',\overline {S'\setminus S})$  representing $\kappa$.
By Lemma \ref{lem:nestedcycles}, one can construct inductively a sequence of relative cycles $\{\sigma_n\}_{n\geq 0}$ of
 $(S',\overline {S'\setminus S})$ such that
\begin{itemize}
\item $\sigma_n$ represents $\widetilde f_{*,r}^n(\kappa)$,
\item $\sigma_n\subset f(\sigma_{n-1} \cap S)$.
\end{itemize}

Consequently, one deduces that
$\sigma_n \cap S \subset \bigcap_{0\leq k\leq n} f^{k}(S)$. The fact that
$\bigcap_{k\in\mathds{Z}} f^{-k}(S) = X$ implies that
$\left(\bigcap_{0\leq k\leq n} f^{k}(S)\right)\cap \Lambda^+\subset U$
 if $n$ is large enough, hence $\sigma_n\cap \Lambda^+\subset U$.
  Therefore, one has $\widetilde f_{*,r}^n(\kappa)\in E_r(U)$.
\end{proof}

\subsection{Higher-dimensional homological decomposition}

The space $G_r(U)=E_r(U)+F_r$ is forward invariant and contains both $F_r$ and the generalized image of $\widetilde f_{*,r}$. Applying Lemma \ref{lem:gimgker} to the couple $(F_r, G_r(U))$, one knows that $\widetilde f_{*,r}$ is shift equivalent to $\check f_{*,r}$, the induced endomorphism of $G_r(U)/F_r$. We will prove now that $\check f_{*,r}$ is dominated by $\psi$. For every $C\in{\cal C}$, write $E_r^C(U)$ for the subspace of
$H_r(S',\overline {S' \setminus S})$ generated by the relative $r$-cycles of
$(S',\overline {S'\setminus S})$ that are included in
$C\cup U$ and define  $G_r^C(U)=E_r^C(U)+F_r$. The set ${\cal C}^*$ being finite, one can suppose $U$ small enough to ensure that for every neighborhood $V$ of $X$ contained in $U$, and every $C\in{\cal C}^*$, one has $E_r^C(V)=E_r^C(U)$.

\begin{proposition}
Let us suppose that $r\geq 2$. Then one has a direct sum
$$G_r(U)/F_r=\bigoplus_{C\in{\cal C^*}}G_r^C(U)/F_r.$$
 Moreover, for every $C\in{\cal C^*}$, one has
 $$\check f_{*,r}\left(G_r^C(U)/F_r\right)\subset G_r^{\psi (C)}(U)/F_r.$$
\end{proposition}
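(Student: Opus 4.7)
The plan is to combine two inputs: a clean topological decomposition of the exit set $\overline{S'\setminus S}$ indexed by $\mathcal{C}^*$, and the connecting homomorphism of the long exact sequence of the pair $(S',\overline{S'\setminus S})$, which by exactness identifies $G_r(U)/F_r$ with a subspace of $H_{r-1}(\overline{S'\setminus S})$. First I would show that $\overline{S'\setminus S}\cap\Lambda^+=\emptyset$: property iii) gives $f(\partial_N S)\cap S=\emptyset$, while any $x\in\Lambda^+$ satisfies $f^k(x)\in S$ for all $k\geq 0$, forcing $\Lambda^+\cap\partial_N S=\emptyset$; combined with $\Lambda^+\subset S$ and the identity $\overline{S'\setminus S}\cap S=\partial_N S$ (which follows from regularity of the triple), this gives the claim. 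Hence $\overline{S'\setminus S}\subset S'\setminus\Lambda^+=\bigsqcup_{C\in\mathcal{C}}C$, and setting $A_C:=C\cap\overline{S'\setminus S}$ produces a disjoint decomposition $\overline{S'\setminus S}=\bigsqcup_{C\in\mathcal{C}^*}A_C$ into clopen subsets, so $H_{r-1}(\overline{S'\setminus S})=\bigoplus_{C\in\mathcal{C}^*}H_{r-1}(A_C)$.

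Directness is then almost automatic. Since $F_r=\ker(\partial_r)$, the connecting map induces an injection $G_r(U)/F_r\hookrightarrow H_{r-1}(\overline{S'\setminus S})$. Any relative cycle $\sigma\subset C\cup U$ has $\partial\sigma\subset(C\cup U)\cap\overline{S'\setminus S}=A_C$, because $U\subset\int(S)$ is disjoint from $\overline{S'\setminus S}$; so $\partial_r$ sends $G_r^C(U)/F_r$ into the summand $H_{r-1}(A_C)$. A relation $\sum_{C\in\mathcal{C}^*}g_C\in F_r$ with $g_C\in G_r^C(U)$ therefore forces each $\partial_r g_C=0$, i.e.\ $g_C\in F_r$, which is exactly directness.

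For spanning, I would take a class in $G_r(U)/F_r$ represented by $\sigma\subset(S'\setminus\Lambda^+)\cup V_2$, with nested $V_2\subset V_1\subset U$ chosen so that $E_r^C(V_2)=E_r^C(U)$ for all $C$ and, using Proposition \ref{prop:acyclic}, so that $H_{r-1}(V_2)\to H_{r-1}(V_1)$ vanishes. Subdividing $\sigma$ with respect to the open cover $\{V_2,S'\setminus\Lambda^+\}$ and using that $S'\setminus\Lambda^+=\bigsqcup_C C$ is a disjoint union of open sets, write $\sigma=\sigma_{V_2}+\sum_C\sigma_C$ with $\sigma_C$ supported in $C$. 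The piece $\tau_C:=(\partial\sigma_C)|_{V_2}$ lives in $C\cap V_2$; because simplices from different $C$'s cannot cancel against one another inside $\partial\sigma\subset\overline{S'\setminus S}$, the complementary part $(\partial\sigma_C)|_{S'\setminus V_2}$ must lie in $A_C$, so $\partial\tau_C\subset\overline{S'\setminus S}\cap V_2=\emptyset$, and $\tau_C$ is a genuine $(r-1)$-cycle in $V_2$. This is the step where $r\geq 2$ is essential: it lets us write $\tau_C=\partial\eta_C$ with $\eta_C\subset V_1\subset U$. Then $\sigma'_C:=\sigma_C-\eta_C$ is a relative cycle of $(S',\overline{S'\setminus S})$ supported in $C\cup U$, the remainder $\sigma-\sum_C\sigma'_C$ is an absolute cycle in $V_1\subset\int(S)$ and so represents a class in $F_r$, and contributions from $C\notin\mathcal{C}^*$ are themselves absolute cycles (because $A_C=\emptyset$). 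This exhibits $[\sigma]$ in $\sum_{C\in\mathcal{C}^*}G_r^C(U)/F_r$.

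For the forward-invariance statement, shrink $U$ (using $f(X)=X$ and continuity of $f$) so that $f(U)\subset U$, and pick $V\subset U$ with $f(V)\subset U$ and $E_r^C(V)=E_r^C(U)$. Represent a class in $G_r^C(U)/F_r$ by $\sigma\subset C\cup V$; by Lemma \ref{lem:nestedcycles}, $\widetilde{f}_{*,r}[\sigma]$ is represented by $\sigma'\subset f(\sigma\cap S)$. Since $C\cap\Lambda^+=\emptyset$, the set $C\cap S$ lies in $S\setminus\Lambda^+$; a short argument shows every component of $C\cap S$ lies in some component of $S\setminus\Lambda^+$ contained in $C$, and by uniqueness of such a component we get $C\cap S=\tilde{C}$, where $\tilde{C}$ is the distinguished component used in the definition of $\Psi$. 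Therefore $f(\tilde{C})\subset\psi(C)$ and $f(\sigma\cap S)\subset f(\tilde{C})\cup f(V)\subset\psi(C)\cup U$, so $[\sigma']\in G_r^{\psi(C)}(U)/F_r$. The main obstacle I expect is in the spanning step: making the chain-level subdivision rigorous, keeping careful track of which simplices of $\partial\sigma_C$ lie in $V_2$ versus $A_C$, and juggling the nested neighborhoods $V_2\subset V_1\subset U$ so that stability of the $E_r^C(U)$ and vanishing of $H_{r-1}(V_2)\to H_{r-1}(V_1)$ hold simultaneously.
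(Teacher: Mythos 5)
Your proof is correct and follows essentially the same route as the paper: identify $G_r(U)/F_r$ inside $H_{r-1}(\overline{S'\setminus S})$ via the connecting homomorphism, decompose that group over $\mathcal{C}^*$, establish spanning by a Mayer--Vietoris/subdivision argument using acyclicity of $X$ (which is exactly where $r\geq 2$ enters), and get forward invariance from Lemma~\ref{lem:nestedcycles} together with $f(\tilde C)\subset\psi(C)$. The minor refinements you add -- an explicit proof that $\overline{S'\setminus S}\cap\Lambda^+=\emptyset$, and the observation that arguing modulo $F_r$ lets you get by with only the vanishing $H_{r-1}(V_2)\to H_{r-1}(V_1)$ rather than two vanishing maps as in the paper -- are worthwhile but do not change the strategy.
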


\begin{proof}
Since $X$ is acyclic there exists neighborhoods $W \subset V$ of $X$ contained in $U$ such that
the inclusion-induced maps $H_s(W) \to H_s(V)$ and $H_s(V) \to H_s(U)$ are trivial for every $s \ge 1$.
By hypothesis, $E_r^C(W) = E_r^C(U)$ for every $C \in \mathcal{C}^*$.
The proof of Mayer Vietoris formula tells us that  $$E_r(W)=\sum_{C\in{\cal C}}E_r^C(W).$$ Indeed, if $\sigma$ is a relative chain of $(S', \overline{S' \setminus S})$ included in $(S' \setminus \Lambda^+)\cup W$, the decomposition principle tells us that the chain $\sigma$ is homologous to
$$\sigma=\sum_{i\in I} \sigma_i+\sigma',$$
where $\sigma'$ is a $r$-chain in $W$ and where each $\sigma_i$ is a connected $r$-chain in
$S' \setminus \Lambda^+$ whose boundary may be written
$$\partial\sigma_i=(\partial\sigma_i)^{\overline{S'\setminus S}}+(\partial\sigma_i)^W$$
 where $(\partial\sigma_i)^{\overline{S'\setminus S}}$ is a $(r-1)$-cycle in $\overline{S'\setminus S}$
 and $(\partial\sigma_i)^W$ a $(r-1)$-cycle in $W$. Of course each cycle $\sigma_i$ is included in a connected component $C_i\in{\cal C}$. The fact that $H_{r-1}(W) \to H_{r-1}(V)$ and $H_r(V) \to H_r(U)$ are trivial implies that there exists a $r$-chain $\nu_i$ in $V$ such that $\partial \nu_i=-(\partial \sigma_i)^W$ and a $(r+1)$-chain $\omega$ in $U$ such that $\partial \omega=\sigma'-\sum_{i \in I} \nu_i$. One deduces that $\sigma$ is homologous in
 $(S', \overline{S' \setminus S})$  to $\sum_{i\in I} (\sigma_i+\nu_i)$, which implies that its homology class is included in $\sum_{C\in{\cal C}}E_r^C(U)$.

Every space $E_r^C(U)$ being included in $F_r$ if $C\not\in{\cal C}^*$, one can write
 $$G_r(U)=\sum_{C\in{\cal C}^*}G_r^C(U).$$
To prove that it is a direct sum, one must prove that  for every family $\{\kappa_C\}_{C\in{\cal C}^*}$ in
$H_r(S',\overline{S' \setminus S})$ such that $\kappa_C\in E_r^C(U)$ and $\sum_{C\in{\cal C}^*} \kappa_C\in F_r$, then one has $\kappa_C\in F_r$ for every $C\in{\cal C}^*$.
 Let us consider the connecting map
$$\partial_r: H_r(S',\overline {S'\setminus S}) \to H_{r-1}(\overline {S' \setminus S}) $$and the inclusion map
 $$\iota_r: H_r(S')\to H_r(S',\overline {S'\setminus S}).$$
 Recall that $\im(\iota_r)=\ker(\partial_r)$. We will use the equality
 $$H_{r-1}(\overline {S'\setminus S})=\bigoplus_{C\in{\cal C}^*}H_{r-1}((\overline {S'\setminus S})\cap C),$$
 true if $r\geq 2$. We have
$$\sum_{C\in{\cal C}^*}\kappa_C\in F_r\Longrightarrow \sum_{C\in{\cal C}^*} \partial_r(\kappa_C)=0
\Longrightarrow\partial_r(\kappa_C)=0\Longrightarrow \kappa_C\in \im(\iota_r)\Longrightarrow \kappa_C\in F_r$$ for every $C\in{\cal C}^*$.

A proof similar to the proof of the invariance of $E_r(U)$ gives us
$$\widetilde f_{*,r}(E_r^C(U))\subset E_r^{\psi(C)}(U),$$
for every $C\in{\cal C}^*$. Indeed, every homology class in $E_r^C(U)$, $C\in{\cal C}^*$, is represented by a relative cycle $\sigma$ included in $C\cup V$ and its image by $\widetilde f_{*,r}$ is represented  by a relative cycle included in $\psi(C)\cup f(V)$, so by a relative cycle included in $\psi(C)\cup U$.
\end{proof}

This proposition completes the proof of Theorem \ref{thm:key}.

\section{Further remarks}

The purpose of this section is to give an overview on what was known about the fixed point index
of isolated fixed points of local homeomorphisms and to show how the results contained in this article
fit in the theory.

It seems to be a connection between the behavior of the fixed point index of orientation-preserving
local homeomorphisms in $\mathds{R}^d$ and that of orientation-reversing local homeomorphisms in $\mathds{R}^{d+1}$. Let us begin with the obvious case $\mathds{R}^0=\{0\}$. The unique homeomorphism of $\mathds{R}^0$ is orientation-preserving and the index of its unique fixed point is $1$. Let us continue with the simple case of $\mathds{R}$.
In dimension $1$, a fixed point $p$ of an orientation-preserving homeomorphism $f$ isolated in the set $\Fix(f)$ is necessarily an isolated invariant set, the fixed point index can take the values $-1$, $0$ or $1$ and the sequence $\{i(f^n,p)\}_{n\geq 1}$ is constant.
A fixed point of an orientation-reversing homeomorphism is always isolated in the set of fixed points (but not necessarily an isolated invariant set) and its index is $1$.

In dimension $2$, one can construct, for every $l\in\mathds{Z}$, a local orientation-preserving homeomorphism having a fixed point $p$ such that $i(f,p)=l$. But if one supposes that $\{p\}$ is an isolated invariant set, it was proven by Le Calvez and Yoccoz in \cite{lecalvezyoccoz} that $l\leq 1$. More precisely there exist two integers $q\geq 1$ and $r\geq 1$ such that the sequence of indices of iterates can be written
$$\{i(f^n,p)\}_{n\geq 1}=\sigma_1-r \sigma_q.$$
Franks introduced Conley index techniques in \cite{franks} to get a short proof of the inequality $i(f,p)\leq 1$ and of the fact that the sequence $\{i(f^n,p)\}_{n\geq 1}$ takes periodically a non-negative value.
Some restrictions appear in the orientation-reversing case, the index of an isolated (in $\Fix(f)$) fixed point only can take the values $-1$, $ 0$ or $1$, as
was shown by Bonino in \cite{bonino}. In the case where the fixed point is an isolated invariant set, much more can be said. Ruiz del Portal and Salazar, see \cite{rportalsalazar}, used Conley index techniques to find again the formula written above for the sequence of indices of positive iterates, and to state an analogous formula in the orientation-reversing case: there exist $e \in\{-1,0,1\}$ and $r\geq 0$ such that
$$\{i(f^n,p)\}_{n\geq 1}=e \sigma_1-r\sigma_2,$$ (the same approach was taken, at least in the orientation-preserving case, in the unpublished article \cite{lecalvezyoccozconley}).

In dimension $3$, one can construct, for every $l\in\mathds{Z}$, a local orientation-preserving or reversing homeomorphism having a fixed point $p$ such that $i(f,p)=l$. In case where the fixed point is an isolated invariant set, it is known that the sequence of fixed point indices
$\{i(f^n, p)\}_{n \ge 1}$ is periodic, see \cite{periodicindices}. No further restrictions appear in the
orientation-preserving case. However, it seemed plausible that the orientation-reversing case would exhibit
some particular behavior connected to the planar orientation-preserving case. This was the motivation for this work.
 Corollary \ref{cor:indexinequality} and, more precisely, Theorem C show
the extra restrictions which appear in the orientation-reversing case in dimension 3. In particular, the index is always
less than or equal to 1, as occurred for orientation-preserving planar homeomorphisms. Nevertheless, the
behavior of the sequence of fixed point indices of the iterates is different in both cases, it is much more rigid
in the planar case. Indeed, in dimension $2$, the map $\varphi$ that appear in Theorem \ref{thm:key} will have to preserve or reverse a cyclic order depending whether $f$  preserves or reverses the orientation. In the first case, all periodic points have the same period, in the second case $\varphi$ has at most two fixed points and the other periodic points of $\varphi$ have period $2$.

In \cite{periodicindices}, an example of a fixed point isolated as invariant set of a local homeomorphism of $\mathds{R}^4$
with an unbounded sequence of fixed point indices is shown. Therefore, in higher dimensions the only restriction
one may expect the fixed point sequence to satisfy are Dold's congruences.
Here, we present an orientation-reversing analogue of the example constructed
in Remark 6 of the aforementioned article.
Suppose that $T$ is a 2-torus embedded in $S^3$ and which cuts it in two solid tori, $T^+$ and $T^-$.
Define an orientation-reversing diffeomorphism $h$ of $S^3$ such that the maps
$h : T^+ \to T^+$ and $h^{-1} : T^- \to T^-$ are solenoidal maps of degree $-m$, $m$ a positive integer, which
means conjugate to a mapping
$$ (\theta, z) \mapsto \left(\theta^{-m}, \frac{1}{2}\theta + \frac{1}{2^{m+1}} z\right) $$
defined on the filled torus $\{ \theta \in \mathds{C}, |\theta| = 1\} \times  \{z \in \mathds{C}, |z| \le 1\}$.
The pair $(T^+, T^-)$ is an attractor/repeller decomposition of $S^3$ such that
$$i(h^n, T^+) = 1 + (-m)^n.$$
At the end of Subsection \ref{subsec:proofradial} we showed that given a regular attractor/repeller decomposition
of the sphere, it is possible to define $g : S^3 \to \mathds{R}$ so that the diffeomorphism
$f : S^4 \to S^4$ induced by the skew-product of $g$ and $h$ satisfies:
\begin{itemize}
\item $e^-$ is an isolated fixed point.
\item Its index by $f^n$ is $i(f^n, e^-) = i(h^n, T^+) = 1 + (-m)^n$.
\end{itemize}
Consequently, the sequence of indices is unbounded either from below or from above.

Another important topic concerning fixed point index which attracted some attention was the local study
of conservative homeomorphisms. If we drop the hypothesis about isolation of the fixed point, it remains true
for the planar case that the index of a fixed point is less than or equal to 1 Provided that the
homeomorphism is area-preserving. This result was proved by Pelikan and Slaminka in \cite{pelikanslaminka}.
Previously, Nikishin and Simon had addressed the same question for diffeomorphisms, see \cite{nikishin} and \cite{simon}.
However, the analogue statement referred to orientation-reversing local homeomorphisms
of $\mathds{R}^3$ does not hold, as the following example shows.

Let $l$ be a positive integer and define $f(z) = z + z^l$, a local diffeomorphism of the complex plane in a neighborhood $U$ of the origin. Using the definition, one may check that the fixed point index
of the origin is $l$. Then, the map
$$g: (z, t) \mapsto \left(f(z), \frac{-t}{\vert f'(z)\vert}\right)$$
defined in $U \times [-1, 1]$ is a local diffeomorphism of $\mathds{R}^3$ using the usual identification of $\mathds{C}\sim\mathds{R}^2$. The Jacobian of $g$
is equal to 1, hence it preserves volume.
Since $f$ preserves orientation, $g$ is orientation-reversing. The origin, 0, is the unique fixed point
of $g$ and its index can be computed easily because $g$ is isotopic to the map $(z,t) \mapsto (f(z), -t)$
through an isotopy which does not create extra fixed points in $U \times [-1,1]$.
Then, $$i(g, 0) = i(f, 0) \cdot i(s, 0) = l,$$ where $$s = - \mathrm{id} : [-1, 1] \to [-1, 1].$$

\bigskip
\medskip
\begin{minipage}[t]{0.5\textwidth}
Luis Hernández Corbato\\
Facultad de Matemáticas\\
Universidad Complutense de Madrid\\
Plaza de Ciencias 3\\
28040 Madrid, Spain\\
Email address: \verb"luishcorbato@mat.ucm.es"\\

\medskip
Francisco R. Ruiz del Portal\\
Facultad de Matemáticas\\
Universidad Complutense de Madrid\\
Plaza de Ciencias 3\\
28040 Madrid, Spain\\
Email address: \verb"francisco_romero@mat.ucm.es"
\end{minipage}
\begin{minipage}[t]{0.5\textwidth}
Patrice Le Calvez\\
Institut de Mathématiques de Jussieu\\
UMR CNRS 7586\\
Université Pierre et Marie Curie\\
Case 247, 4, place Jussieu\\
75252 Paris, France\\
Email address: \verb"lecalvez@math.jussieu.fr"
\end{minipage}

\begin{thebibliography}{LRS10}

\bibitem[BB92]{babenko}
I.K. Babenko, S.A. Bogatyi, {\em The behavior of the index of
periodic points under iterations of a mapping}, Math. USSR
Izvestiya, {\bf 38} (1992), 1-26.

\bibitem[Bo02]{bonino}
M. Bonino, \emph{Lefschetz index for orientation reversing planar homeomorphisms},
Proc. Amer. Math. Soc. {\bf 130}(2002), no 7, 2173-2177.

\bibitem[Co78]{conley}
C. Conley, \emph{Isolated invariant sets and the Morse index}, CBMS Reg. Conf. Ser. in Math., {\bf 38}.
 AMS, Providence, (1978).

\bibitem[Do83]{dold}
A. Dold, \emph{Fixed point indices of iterated maps}, Invent. Math. \textbf{74} (1983), 419-435.

\bibitem[Ea89]{easton}
R. Easton, \emph{Isolating blocks and epsilon chains for maps} Physica D {\bf 39} (1989), no 1, 95-110.

\bibitem[FH77]{fathiherman}
A. Fathi, M. Herman, \emph{Existence de difféomorphismes minimaux}, Astérisque, {\bf 49} (1977), 37-59.

\bibitem[Fr99]{franks}
J. Franks,  {\em The Conley index and non-existence of minimal homeomorphims}, Illinois J. Math. Soc. {\bf 43} (1999), no 3, 457-464.

\bibitem[FR00]{franksricheson}
J. Franks, D. Richeson, {\em Shift equivalence and the Conley
index}, Trans. Amer. Math. Soc. {\bf 352} (2000), no 7, 3305-3322.

\bibitem[Ho53]{homma}
T. Homma, \emph{An extension of the Jordan Curve Theorem}, Yokohama Math. J. {\bf 1} (1953), 125-129.

\bibitem[Ke76]{Keesling}
J. Keesling, {\em On the Whitehead theorem in shape theory}, Fund. Math.92 (1976) 247-253.

\bibitem[LRS10]{periodicindices}
P. Le Calvez, F.R. Ruiz del Portal, J.M. Salazar, \emph{Fixed point indices of the iterates of R3-homeomorphisms at fixed points which are isolated invariant sets}, J. London Math. Soc. {\bf 82} (2010), no 2, 683-696.

\bibitem[LY97]{lecalvezyoccoz}
P. Le Calvez, J.C. Yoccoz,  {\em Un theor\'eme d'indice pour les
hom\'eomorphismes du plan au voisinage d'un point fixe}, Annals of
Math. {\bf 146} (1997), 241-293.

\bibitem[LY]{lecalvezyoccozconley}
P. Le Calvez, J.C. Yoccoz,  {\em Suite des indices de Lefschetz
des it\'er\'es pour un domaine de Jordan qui est un bloc isolant},
Unpublished.

\bibitem[MS82]{MardesicSegal}
S. Marde\v{s}i\'c, J. Segal, {\em Shape theory}, North-Holland, Amsterdam, 1982.

\bibitem[Ma81]{scottish}
D. Mauldin, \emph{The Scottish Book}, Birkhauser-Boston, Boston, Mass. (1981).

\bibitem[MM02]{handbookconley}
K. Mischaikow, M. Mrozek,  \emph{Conley index}.
In B. Fiedler (Ed.), \emph{Handbook of Dynamical Systems}, North-Holland, Volume{\bf 2} (2002), 393-460.

\bibitem[Ni74]{nikishin}
N.A.  Nikishin, \emph{Fixed points of diffeomorphisms of two-dimensional spheres preserving an
oriented plane},
Funktional Anal. i Prelozen {\bf 8} (1974), 84-85.


\bibitem[PS87]{pelikanslaminka}
S. Pelikan, E. Slaminka, \emph{A bound for the fixed point index of area-preserving homeomorphisms of two-manifolds},
Ergodic Theory Dynam. Systems {\bf 7} (1987), no. 3,  463-479.

\bibitem[RW02]{richesonwiseman}
D. Richeson, J. Wiseman, \emph{A fixed point theorem for bounded dynamical systems}, Illinois Journal of Mathematics, {\bf 46}
 (2002), no. 2, 491-495, addendum, {\bf 48} (2004), no. 3, 1079-1080.

\bibitem[RS02]{rportalsalazar}
F.R. Ruiz del Portal, J.M. Salazar, {\em Fixed point index of
iterations of local homeomorphisms of the plane: a Conley-index
approach}, Topology {\bf 41} (2002), 1199-1212.

\bibitem[Ga08]{gabites}
J. S\'anchez-Gabites, \emph{Aplicaciones de la topolog\'{\i}a geométrica y algebraica al estudio de flujos continuos en variedades}. Ph.D. Dissertation. Universidad Complutense de Madrid, 2008.

\bibitem[Si74]{simon}
C. Simon, \emph{A bound for the fixed-point index of an area-preserving map with applications to mechanics},
Invent. Math. {\bf 26} (1974), 187-200.

\bibitem[Sz98]{szymczak}
A. Szymczak, {\em A cup product pairing and time-duality for discrete dynamical systems},
 Topology {\bf 37} (1998) 1299-1311.

\end{thebibliography}
\end{document}